\documentclass[journal]{IEEEtran}
\IEEEoverridecommandlockouts                              

\IEEEoverridecommandlockouts
\usepackage{stfloats}
\usepackage{bbm}

\usepackage{graphicx} 
\usepackage{amsmath} 
\usepackage{amssymb}  

\usepackage{lipsum}
\usepackage{graphicx}
\usepackage[T1]{fontenc}
\usepackage{aecompl}
\usepackage{amsfonts}
\usepackage{bbm}
\usepackage{ifthen}
\usepackage{subcaption}
\usepackage{multirow}
\usepackage{diagbox}
\usepackage{ifthen}
\usepackage{cite}
\usepackage[usenames, dvipsnames]{color}
\usepackage{url}
\usepackage{hyperref}
\usepackage[hyphenbreaks]{breakurl}
\usepackage{mathtools}	
\usepackage[normalem]{ulem}
\usepackage[noend]{algorithm2e}
\RestyleAlgo{ruled}
\usepackage{algpseudocode}
\usepackage{tabularx}
\usepackage{hhline}
\usepackage{xfrac}

\usepackage{amsthm}
\usepackage{balance}
\usepackage{mathtools}
\mathtoolsset{showonlyrefs=true}

\newtheorem{theorem}{Theorem}
\newtheorem{lemma}{Lemma}

\newtheorem{defn}{Definition}
\newtheorem{corollary}{Corollary}

\newtheorem{remark}{Remark}
\newtheorem{assumption}{Assumption}

\DeclareMathOperator{\vol}{vol}
\newcommand{\norm}[1]{\left\lVert#1\right\rVert}

\usepackage{bbm}

\newcommand{\A}{\mathcal{A}}

\newcommand{\R}{\mathcal{R}}
\DeclareMathOperator{\opint}{int}

\makeatletter
\newcommand*{\centerfloat}{%
  \parindent \z@
  \leftskip \z@ \@plus 1fil \@minus \textwidth
  \rightskip\leftskip
  \parfillskip \z@skip}
\makeatother

\usepackage{enumerate}
\usepackage[shortlabels]{enumitem}

\usepackage{ifthen}
\newboolean{showcomments}
\setboolean{showcomments}{true}
\usepackage[usenames,dvipsnames]{color}
\usepackage{todonotes}

\definecolor{bleudefrance}{rgb}{0.19, 0.55, 0.91}
\definecolor{ao(english)}{rgb}{0.0, 0.5, 0.0}

\newcommand{\addcite}[0]{\ifthenelse{\boolean{showcomments}}
{\textcolor{purple}{(add cite(s)) }}{}}%

\newcommand{\enrique}[1]{  \ifthenelse{\boolean{showcomments}}
{\todo[inline,color=bleudefrance, caption ={}]{Enrique: #1}}{}}
\newcommand{\emmargin}[1]{\ifthenelse{\boolean{showcomments}}{\marginpar{\color{bleudefrance}\tiny EM: #1}}{}}

\newboolean{showedits}
\setboolean{showedits}{false}
\usepackage[markup=underlined]{changes}
\definechangesauthor[color=bleudefrance]{EM}
\newcommand{\aem}[1]{
\ifthenelse{\boolean{showedits}}
{\added[id=EM]{#1}}
{\!#1\hspace{-4.75pt}}
}
\newcommand{\repem}[2]{
\ifthenelse{\boolean{showedits}}
{\replaced[id=EM]{#1}{#2}}
{\!#1\hspace{-4.75pt}}
}
\newcommand{\dem}[1]{
\ifthenelse{\boolean{showedits}}
{\deleted[id=EM]{#1}}
{}
}

\makeatletter
\if@todonotes@disabled

\else

\fi
\makeatother

\newcommand{\hl}[1]{\textcolor{black}{#1}}
\newcommand{\mypara}[1]{\smallskip\noindent\textbf{#1.}\ }
\newcommand{\red}[1]{#1}

\newcommand{\blue}[1]{#1}

\newboolean{arxiv}
\setboolean{arxiv}{true}
\newboolean{lemmas}
\setboolean{lemmas}{true}
\allowdisplaybreaks

\usepackage{setspace}

\newboolean{with-appendix}
\setboolean{with-appendix}{false}
\mathtoolsset{showonlyrefs=true}

\title{A Recurrence-based Lyapunov Direct Method for Stability Analysis}
\title{A Recurrence-based Direct Method for Stability Analysis and GPU-based Verification of Non-monotonic Lyapunov Functions}
\title{Stability Analysis and Data-driven Verification via\\ Recurrent Lyapunov Functions}

\author{Roy Siegelmann, Fernando Paganini, and Enrique Mallada%
\thanks{Manuscript received XXX; revised XXX; accepted XXX. This work was supported by NSF through Grants CAREER 1752362, CPS 2136324, and Global Centers 2330450, by the DOE Office of Science (ASCR) under Award No.~826565, and by AFOSR under Grant FA9550-23-1-0350. (Corresponding author: Enrique Mallada.)}%
\thanks{R. Siegelmann is with the Massachusetts Institute of Technology (MIT), Cambridge, MA, USA (e-mail: resiege@mit.edu).}%
\thanks{F. Paganini is with Universidad ORT Uruguay, Montevideo, Uruguay (e-mail: paganini@ort.edu.uy).}%
\thanks{E. Mallada is with Johns Hopkins University, Baltimore, MD, USA (e-mail: mallada@jhu.edu).}}

\begin{document}
\bstctlcite{MyBSTcontrol} 

\allowdisplaybreaks

\maketitle
\begin{abstract}
Lyapunov's direct method is a cornerstone of stability and control, but it hinges on finding a Lyapunov function, a task demanding ingenuity or computation. A key difficulty is that every sub-level set of the function must be forward invariant, coupling its geometry to the system's trajectories. We relax this by replacing invariance with recurrence: a set is recurrent if every trajectory starting in it returns within a finite time. This yields the notion of a Recurrent Lyapunov Function (RLF), whose sub-level sets need only be recurrent. We show that, under mild conditions, RLFs guarantee stability, and we introduce stronger notions yielding asymptotic and exponential stability. We also give norm-based converse theorems: under the corresponding stability conditions, any norm is an RLF for their practical versions. We then develop GPU-based algorithms that certify (practical) stability from trajectory data alone, without a Lyapunov function. Certifying stability up to an $\varepsilon$-neighborhood needs only $O(\log(1/\varepsilon))$ trajectory evaluations, with constants growing as the certified decay rate nears the true one, exposing an intrinsic performance-cost trade-off.
\end{abstract}

\medskip

\section{Introduction}\label{ct-Intro}

Lyapunov stability theory plays a central role in the study of dynamical systems. It provides a rigorous mathematical framework for qualitatively analyzing system solutions and has heavily influenced systems theory and engineering over the past century. 
\red{Its fundamental tool, Lyapunov's direct (second) method~\cite{lyapunov1992general}, states} mild conditions on a function $V(x)$ (non-increasing along trajectories and proper) that can certify stability of an equilibrium point. \red{Since its inception in 1892, the method has found ubiquitous applications across engineering, e.g., aerospace, electrical, mechanical, and chemical~\cite{sontag2013mathematical,parks1992lyapunov,sastry1999lyapunov,Khalil2002}.} 

\red{A critical step in applying Lyapunov's direct method is finding a function $V$ that satisfies the required conditions.} \red{Unfortunately, while such a function is known to exist via converse theorems, e.g.,~\cite{massera1949liapounoff}, manually finding one is often tricky, relying on ingenuity and domain knowledge.} To circumvent this step, a variety of computational methods have been proposed for finding Lyapunov functions \cite{Peter2015}, e.g., via the use of partial differential equation (PDE) solvers to solve Zubov's Equation \cite{HASSAN1981,VANNELLI198569}, linear programs (LPs) to find piece-wise linear Lyapunov functions~\cite{Pedro1999}, and semidefinite programs (SDPs) to solve linear matrix inequalities (LMIs) \cite{Goebel2006} or sum-of-squares (SOS) problems \cite{papachristodoulou2002sos}.
\red{However, the computational complexity increases exponentially with both the state dimension and the Lyapunov function parameterization~\cite{Peter2015,ahmadi2019dsos}.}

This has led to multiple investigations  into relaxing the conditions required for $V$, and in particular, its time derivative $\dot V$.
Such relaxations can be broadly divided into three groups. The first group seeks LaSalle-Krasovskii type of conditions by relaxing the negative definiteness of $\dot V$, i.e., only requiring $\dot V\leq 0$; see~\cite{rouche1977stability,Khalil2002} and its generalization~\cite{mazenc2004strong,malisoff2009constructions}. \red{The second group further relaxes negative definiteness by allowing} $\dot V>0$ on some regions of the state space. This is implicitly done by using 
generalizations of the comparison lemma~\cite{gunderson1971comparision} to impose conditions on higher order time derivatives of $V$ that still ensure convergence of \red{$V\rightarrow 0$ while allowing $\dot V>0$.} The third group uses the so called discretization method, which considers a fixed parameter $T>0$ and leverages the net decrement of $V$ across any trajectory $x(t)$, i.e., $V(x(t+T)) - V(x(t))$, to reason about stability~\cite{aeyels1998new,coron1994relation}.
Unfortunately, despite such efforts, the basic principle can still be traced back to the (indirect) construction of a Lyapunov function whose sub-level sets are invariant~\cite{ahmadi2008non,karafyllis2011can}, \red{which still must be verified analytically or via a convex program, rendering similar challenges.}



\red{The crux is that Lyapunov's direct method implicitly constrains their geometry by requiring every sub-level set to be positively invariant.} 
\red{This tightly couples level-set geometry to the vector field, making such functions difficult to construct.} 
\red{This paper relaxes this condition by replacing invariance of sub-level sets with the weaker notion of recurrence.} We say that a set is ($\tau$-) recurrent if every trajectory that starts in the set returns to it (within $\tau$ seconds). 
\red{This decouples level-set geometry from instantaneous vector-field alignment while still enabling stability guarantees.} 
\red{Recurrence has recently proved a versatile mechanism for estimating regions of attraction} of stable equilibria~\cite{shen2022model} and for verifying safety properties of dynamical systems~\cite{ssm2024allerton}.
\red{Moreover, (control) recurrence can be achieved at lower data rates than invariance~\cite{sm2024hscc} and often enforced from finitely many trajectory samples~\cite{sibai2026recurrence}.}

In this paper, we seek to explore the role of recurrence in certifying different notions of stability of an equilibrium point. The contributions of our work are several:
\begin{itemize}[leftmargin=*]
    \item\emph{Recurrent Lyapunov Functions:}
    \red{We introduce Recurrent Lyapunov Functions (RLFs, Definition~\ref{defn:RLF}), which generalize classical Lyapunov functions by replacing invariance of sub-level sets with recurrence.}
    \hl{This relaxation decouples the geometry of the level sets from the vector field.}

    \item\emph{Stability Guarantees:} 
    \red{We establish stability theorems demonstrating that} \blue{ existence of an RLF over a compact set}  \red{guarantees stability, asymptotic stability, and exponential stability} \hl{ (Theorems~\ref{thm:recurrence-stability},~\ref{thm:asymptotic-recurrence-stability},~\ref{thm:tau-exponential-stability})}. 
    \red{These results enable stability proofs without strict invariance conditions.}

    \item\emph{Norm-agnostic Converse Theorems:}
    \hl{We show that any norm is guaranteed to satisfy} \blue{a \emph{practical} ($\varepsilon$-relaxed) version of} \hl{our RLF conditions (Theorems~\ref{thm:norm-srlf} and~\ref{thm:norm-erlf}), provided} \blue{the system is asymptotically or exponentially stable; these relaxed conditions in turn certify practical stability.}
    \red{This highlights the fundamental role of recurrence in stability analysis and opens the door to verification methods that do not require computing a Lyapunov function.}
    \item\hl{\emph{Data-Driven Verification with Guarantees:}
    We develop two GPU-parallelized algorithms for trajectory-based verification of the RLF conditions: one for the best decay rate over a fixed region (Algorithm~\ref{alg:region-verification}), the other for the largest region of attraction at a target rate $\alpha$ (Algorithm~\ref{alg:grow-alpha-roa}). We further show that $O(\log(R/\varepsilon))$ trajectory evaluations suffice (Theorem~\ref{thm:sample-complexity}) to certify practical exponential stability on $B_R\!\setminus\! B_\varepsilon$, with constants exposing an intrinsic trade-off between sample complexity and certified performance.}
\end{itemize}


\hl{\red{A preliminary version appeared in \cite{sspm2023cdc}; this paper extends it in several ways.} First, we extend our stability analysis to Recurrent Lyapunov Function conditions over arbitrary sets containing the equilibrium, rather than sub-level sets. Second, we introduce novel converse theorems showing that any norm can satisfy suitable weak versions of these 
conditions; these in turn are shown to impose \emph{practical} asymptotic and exponential stability. Third, we  provide estimates on the sample complexity of verifying practical exponential stability over bounded regions of the state space. Finally, we develop a  verification algorithm and provide numerical validations illustrating the merits of our framework.}


\paragraph*{Closely related work} The derived conditions are similar in spirit to the ones considered by Karafyllis in \cite{karafyllis2011can}, which studies robust stability analogs (cf. Propositions 2.3 and 2.5). Particularly, our asymptotic stability condition is closely related to~\cite[Prop. 2.5]{karafyllis2011can}.
Our stability, \hl{ asymptotic stability,} exponential stability \red{conditions}, and \hl{their practical counterparts} are, however, new and not present in prior work.
More importantly, the focus of our paper is on exploring the connection of such conditions with the recurrence of level sets of $V$ and developing parallelizable algorithms that can be implemented on GPUs, whereas \cite{karafyllis2011can} focuses on robust stability and provides Matrosov-type conditions.
\hl{At the data-driven verification level, the learning-based method of Boffi et al.~\cite{boffi2021learning} has a sample complexity of $\Omega(\varepsilon^{-2d})$ in the resolution $\varepsilon$, under an incremental-stability assumption on the underlying system. Our $O(M^d\log(R/\varepsilon))$ bound (Theorem~\ref{thm:sample-complexity}) leverages recurrence in lieu of incremental stability: the $\varepsilon$ dependence drops from polynomial to logarithmic, while the constant $M$ encodes the gap $(\lambda - \alpha)$ between the system's exponential rate $\lambda$ and the certified rate $\alpha$, making the trade-off between certified performance and sample complexity explicit.}

\hl{The paper is organized as follows. Section~\ref{ct-prelim} establishes preliminaries on dynamical systems and stability; Section~\ref{ct-recur} introduces recurrent sets and their use in bounding trajectories. Sections~\ref{ct-rlf}--\ref{ct-exponential} develop Recurrent Lyapunov Functions and the corresponding stability, asymptotic stability, and exponential stability theorems. Section~\ref{ct-weak converse} shows that norms satisfy the RLF conditions, yielding practical stability guarantees and norm-agnostic converse theorems. Section~\ref{ct-verify} develops trajectory-based verification algorithms, Section~\ref{ct-numerical} presents numerical experiments, and Section~\ref{ct-conclusions} concludes.}


\textit{Notation:} Throughout the text, we let \( \|\cdot\| \) denote an arbitrary norm on \( \red{\mathbb{R}^d} \), and define \( B_r(x) \) as the closed ball of radius \( r \) centered at \( x \in \red{\mathbb{R}^d} \). Given a set \( S \subset \red{\mathbb{R}^d} \), the distance from a point \( y \in \red{\mathbb{R}^d} \) to \( S \) is defined as
$\mathrm{d}(y, S) := \inf_{x \in S} \|y - x\|.$
We also use \( \mathbb{R}_{\geq 0} := \{ x \in \mathbb{R} \mid x \geq 0 \} \), \( \mathbb{R}_{> 0} := \{ x \in \mathbb{R} \mid x > 0 \} \), and $[n]:=\{1,\dots,n\}$.
\section{Preliminaries}\label{ct-prelim}
 We consider a continuous-time dynamical system 
\begin{equation}\label{eq:system}
 \dot{x} = f(x)\,,  
\end{equation}
where $x\in D\subseteq {\mathbb{R}^d}$ is the state, and the map $f: D \rightarrow {\mathbb{R}^d}$ is a continuous function defined over an \textit{open domain} $D$. 
Given an initial state $x$, we use $\phi(t,x)$ to denote the solution of \eqref{eq:system}. Throughout the paper, we make the following assumption about the vector field and its solutions.

\begin{assumption}\label{as:Lipschitz}
    The vector field $f(x)$ in \eqref{eq:system} is locally Lipschitz. That is, for any compact set $S \subset D$, there exists a constant $\overline{L}_S\in\mathbb{R}_{\geq0}$ such that
     \[
     \|f(y)-f(x)\|\leq \overline{L}_S \|y-x\|, \qquad \forall x,y \in S.
     \] 
\end{assumption}

The local Lipschitz nature of the vector field implies that solutions must exist for some amount of time, which we will denote by the following:

\begin{defn}[Interval of Existence]
\label{defn:IoE}
For $x \in D$, the \textbf{maximal interval of existence} $I(x) \subset \mathbb{R}$ is the largest open interval around $t=0$ such that $\phi(t,x)$ exists for all $t \in I(x)$. The trajectory is said to be \textbf{forward complete} if $I(x) \supset [0,\infty)$.
\end{defn}

Whenever the initial condition is understood from the context, we will use {$x(t):=\phi(t,x)$}. 

It will be useful to introduce \emph{one-sided} Lipschitz bounds. 
This concept, traditionally stated in inner-product spaces, has recently  been extended \cite{davydov2022non} to a larger family of norms in ${\mathbb{R}^d}$.  The construction is based on concept of \emph{weak-pairing} $[\cdot;\cdot]:{\mathbb{R}^d}\times{\mathbb{R}^d} \to \mathbb{R}$,  a  generalized inner product satisfying
:
\begin{itemize}
\item[(i)] 
$[x_1 + x_2;y]\leq [x_1;y]+ [x_2;y]$ (subadditivity in first component); 
\item[(ii)]
$[\alpha x;y]=[x; \alpha y] = \alpha [x;y]$ for $\alpha>0$; $[-x;-y]=[x;y]$ 
(weak homogeneity); 
\item[(iii)] $[x;x]>0$ for $x\neq 0$ (positive definiteness);
\item[(iv)] 
$|[x;y]| \leq [x;x]^\frac{1}{2} [y;y]^\frac{1}{2}$ (Cauchy-Schwarz inequality).
\end{itemize}

For every norm there exists a weak pairing satisfying 
$\|x\| =[x;x]^\frac{1}{2}$; for the (weighted) Euclidean norm the natural pairing is the standard (weighted) inner product. In \cite{davydov2022non},
weak pairings are provided as well for (weighted) $l_1$, $l_\infty$ norms in ${\mathbb{R}^d}$. These are also shown to satisfy some additional properties, in particular we will use the \emph{curve norm derivative formula}
\begin{equation} \label{eq:curve norm}
\|x(t)\|\cdot D^+\|x(t)\| = [\dot{x}(t);x(t)],
\end{equation}
where $D^+\varphi(t) := \limsup_{h\to 0+} \frac{\varphi(t+h)-\varphi(t)}{h}$ denotes the upper-right Dini derivative.\\
The following definition is also based on \cite{davydov2022non}: 
\begin{defn}[One-sided Lipschitz] \label{def:one-side-L}
For system \eqref{eq:system} under Assumption \ref{as:Lipschitz}, the one-sided Lipschitz constant over the compact set $S\subset D$ corresponding to a norm $\|\cdot\|$ is defined as the smallest $L_S\in \mathbb{R}$ such that  
\[
 [f(y)-f(x);y-x] \leq L_S\|y-x\|^2, \quad \forall x, y \in S, \]
 where $[\cdot;\cdot]$ is a weak pairing associated with $\|\cdot\|$. \\
It follows from 
condition (iv) above that $L_S \leq  \overline{L}_S$. 
\end{defn}
For a continuously differentiable field $f(x)$ and a \emph{convex} domain $S$, 
it is shown in \cite{davydov2022non} that the one-sided Lipschitz constant can be computed from the Jacobian matrix:
\begin{equation}\label{eq:mujacobian}
    L_S=\sup_{x\in S}\mu\left(\tfrac{\partial f}{\partial x}(x)\right).
\end{equation}
Here $\mu(A)=\lim_{h\to 0+} (\|I+hA\|-1)/h$, the logarithmic matrix norm associated with the vector norm under consideration. In particular, for $\|\cdot\|_\infty$ and $\|\cdot\|_2$, $\mu_\infty(A)=\max_i\big(a_{ii}+\sum_{j\neq i}|a_{ij}|\big)$ and $\mu_2(A)=\lambda_{\max}\!\big(\tfrac{A+A^\top}{2}\big)$, respectively. \\

We next review the core building blocks of Lyapunov Stability Theory.
\begin{defn}[Stability] \label{defn:stability}
    An equilibrium $x^*$ is \textbf{stable} if for any $\varepsilon>0$, $\exists \delta >0$, such that if $\|x-x^*\|\leq \delta$ then $\|\phi(t,x)-x^*\|\leq \varepsilon$ $\forall t\geq0$.
\end{defn}

\begin{defn}[Attractivity]
    An equilibrium $x^*$ is \textbf{attractive on the set $S$} if for every $x\in S$, $\|\phi(t,x)-x^*\|\rightarrow 0 $ as $t\rightarrow\infty$. 
\end{defn}

\begin{defn}[Asymptotic Stability]\label{defn:asymptotic stability}
    An equilibrium $x^*$ is \textbf{asymptotically stable on the set $S$}, where $x^*\in \mathrm{int}(S)$, if it is stable, and attractive on $S$.
\end{defn}

\begin{defn}[Exponential Stability]\label{defn:exp stability}
    An equilibrium $x^*$ is \textbf{exponentially stable} on the set $S$, where $x^*\in \mathrm{int}(S)$,   if there exist constants $C\ge 1$, $\lambda>0$ such that if $x\in S$,  then 
    \begin{equation}\label{eq:lambda-exp}
        \|\phi(t,x)-x^*\|\leq Ce^{-\lambda t}\|x - x^{*}\|,\;\;\;  \forall t\geq0.
    \end{equation}
\end{defn}
It will also be useful to define sets that are of general use to characterize transient, as well as asymptotic behavior. 

\begin{defn} [Reachable Tube]\label{defn:reachable-set}  
For the dynamical system \eqref{eq:system}, a time $\tau > 0$, and a set $S \subset D$, we denote the $\tau$-reachable tube from $S$ within $\tau$ units of time by 
$$\mathcal{R}^{\tau}(S) = \bigcup_{x \in S, t \in [0, \tau]\cap I(x)} \{\phi(t,x)\}.$$
\end{defn}


\begin{defn}[Positively Invariant Sets]\label{defn:invariant set}
A set $S\subseteq {\mathbb{R}^d}$ is positively invariant w.r.t. \eqref{eq:system} if and only if:
\begin{equation}\label{eq:invariant-criterion}
x\in S\implies \phi(t,x)\in S,\quad \forall\, t\in \mathbb{R}_{\geq 0}.
\end{equation}
\end{defn}
Since we only consider here positively invariant sets, as opposed to negatively invariant sets, we will often refer to them as plainly invariant sets.
As mentioned before,  the notion of positive invariance is a fundamental building block of Lyapunov Theory. By trapping trajectories on compact sub-level sets of a function one can guarantee boundedness of trajectories, stability, and asymptotic stability via a gradual reduction of the Lyapunov function value. 
\section{Recurrence}\label{ct-recur}

To relax the notion of invariance, one must allow trajectories to temporarily leave a set. However, in order to still be able to make statements about asymptotic behavior, our first condition requires trajectories to always come back.

\begin{defn}[Recurrent Set]\label{defn:recurrent}
A set $S \subseteq {\mathbb{R}^d}$ is recurrent w.r.t. \eqref{eq:system}, if for any $x\in S\,$, and  $t\geq0$,
\begin{equation}\label{eq:recurrence}
    \exists\; t' > t,\quad  \text{s.t.}\quad \phi(t',x)\in S.
\end{equation}
\end{defn}

Since trajectories are allowed to leave $S$, in our development, it will be useful to keep track of the  time intervals where a trajectory $\phi(t,x)$ lies within a given set $S$ for a given initial point $x\in D$.
\hl{\begin{defn}[Containment Times]
    Given a set $S\subset D$, a point $x\in D$, and a horizon $\tau>0$, we define
    $$T_S(x;\tau) := \{t \in (0,\tau] \mid \phi(t,x) \in S\},$$
    the set of \emph{containment times} of $x$ in $S$ over the interval $(0,\tau]$.
\end{defn}}

The notion of recurrent sets introduced here is related to classical Poincar\'e recurrence~\cite{poincare1893methodes}, and in particular, Poincar\'e recurrent sets~\cite[Def. 2.4.1]{alongi2007recurrence}, which constitutes the union of Poincare recurrent points; a point $x$ is Poincare recurrent if its backward and forward flows, i.e., $\{\phi(-t,x)\}_{t\geq0}$ and $\{\phi(t,x)\}_{t\geq0}$, get arbitrarily close to $x$, \textit{infinitely often}. 
In fact, one can show that any open subset $S$ of a Poincar\'e recurrent set is a Recurrent Set according to Definition \ref{defn:recurrent}.


\dem{Implicit in Definition \ref{defn:recurrent}  is that trajectories which start in $S$ are \textit{forward complete}, since the flow is defined for arbitrarily large times. See more comments on this assumption in Remark \ref{rem:forward complete}. Another consequence of Definition \ref{defn:recurrent} is that trajectories that start in $S$ will visit it \emph{infinitely often} (again and again), and \textit{forever} (there is always a future time when it is visited again). These properties will allow us to make statements on asymptotic behavior, under appropriate assumptions on $S$. However, it does not allow to bound how far trajectories can travel between visits to $S$, limiting their application to stability analysis. This motivates the notion of $\tau$-recurrent sets.}
\aem{Definition~\ref{defn:recurrent} implicitly requires that trajectories starting in $S$ are forward complete (see Remark~\ref{rem:forward complete}) and ensures they visit $S$ infinitely often. These properties enable statements about asymptotic behavior, but do not bound how far trajectories travel between visits---motivating the stronger notion of $\tau$-recurrent sets.}

\begin{defn}[$\tau$-Recurrent Set]\label{defn:T recurrent}
A set $S\subseteq D$ is $\tau$-recurrent w.r.t. \eqref{eq:system}, if
for any $x\in S\,$ and $t\geq0$, 
\begin{equation}\label{eq:T-recurrence}
    \exists\; t'> t,\quad \text{with}\quad t'-t\in (0,\tau] \quad \text{and}\quad \phi(t',x)\in S.
\end{equation}
We further say that $S$ is \emph{strictly $\tau$-recurrent,} if for any $x \in S\,$, and $t \geq 0$,
\begin{equation}\label{eq:strictly-T-recurrence}
    \exists\; t'> t,\quad \text{with}\quad t'-t\in (0,\tau] \quad \text{s.t.}\quad \phi(t',x)\in S \backslash \partial S.
\end{equation}
\end{defn}

While Definition \ref{defn:T recurrent} is sufficient for the development that follows, its conditions for far-off times are hard to verify. The next lemma shows that such verification is simpler when the set $S$ is compact.

\begin{lemma}[Characterization of Compact $\tau$-Recurrent Sets]
\label{lem:infinite-seq}
Let $S \subset D$ be a compact set, and consider the system \eqref{eq:system} under Assumption \ref{as:Lipschitz}, and $\tau>0$. The following conditions are equivalent:
\begin{enumerate}[$(i)$]
    \item $S$ is $\tau$-recurrent.
    \item For any $x\in S$, $\exists$ $t \in(0,\tau ]\cap I(x)$ with $\phi(t,x)\in S$.
    \item For any $x\in S$ there is a sequence $\{t_n\}_{n\in \mathbb{N}}$  satisfying,
    \begin{equation}\label{eq:tau-decreasing-sequence-pre}
    {\lim_{n\rightarrow\infty}t_n = \infty\,, \quad\text{with}\quad t_{n+1}-t_n\in (0,\tau]}\,, 
    \end{equation}
    {and $\phi(t_n, x) \in S$ $\forall n$.}
\end{enumerate}

\end{lemma}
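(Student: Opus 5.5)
I will prove the cycle of implications $(i)\Rightarrow(ii)\Rightarrow(iii)\Rightarrow(i)$.

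\emph{Step $(i)\Rightarrow(ii)$.} This follows directly from Definition~\ref{defn:T recurrent} with $t=0$. There is $t'\in(0,\tau]$ with $\phi(t',x)\in S$. Since the definition asserts that $\phi(t',x)$ exists, we have $t'\in I(x)$.

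\emph{Step $(ii)\Rightarrow(iii)$.} I build the sequence inductively. Set $t_0=0$. Given $t_n$ with $\phi(t_n,x)\in S$, apply $(ii)$ to the point $\phi(t_n,x)\in S$. This gives $s_n\in(0,\tau]$ with $\phi(s_n,\phi(t_n,x))\in S$. Set $t_{n+1}=t_n+s_n$. By the semigroup property, $\phi(t_{n+1},x)\in S$, and the concatenation shows that $t_{n+1}\in I(x)$.

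The difficulty is to ensure $t_n\to\infty$, since the increments $s_n$ could shrink so that $t_n\to T^*<\infty$. This is the main obstacle, and I use compactness of $S$ together with Assumption~\ref{as:Lipschitz} to resolve it.

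First I show forward completeness. The trajectory returns to the compact set $S$ at times $t_n$. Every point of $S$ has a uniform existence time $\delta>0$: by local Lipschitzness, $f$ is bounded on a compact neighborhood of $S$ inside the open set $D$, so the standard Picard estimate gives this $\delta$. Now suppose the supremum $T^*$ of $I(x)\cap[0,\infty)$ were finite. Then the $t_n$ are bounded by $T^*$. Choose $n$ with $t_n>T^*-\delta$. Since $\phi(t_n,x)\in S$, the solution extends past $T^*$, a contradiction. Hence $I(x)\supset[0,\infty)$.

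Second, I avoid Zeno-type accumulation by choosing the increments greedily. Instead of an arbitrary $s_n$, I take $s_n$ to be the \emph{largest} time in $(0,\tau]$ with $\phi(s_n,\phi(t_n,x))\in S$. This maximum exists because $T_S(y;\tau)$ is a nonempty set whose closure intersected with $(0,\tau]$ is compact. Indeed, $S$ is closed and $\phi$ is continuous, so $\{t\in[0,\tau]:\phi(t,y)\in S\}$ is closed, and its supremum is attained and positive.

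Suppose $t_n\to T<\infty$. Then the points $\phi(t_n,x)$ converge, by continuity, to $\phi(T,x)\in S$. Apply $(ii)$ at $\phi(T,x)$ and use the maximality of $s_n$. Since $t_n<T$ and $T-t_n\le\tau$ eventually, we get $s_n\ge T-t_n$, so $t_{n+1}\ge T$. This contradicts $t_n<T$ for all $n$. Hence $t_n\to\infty$.

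\emph{Step $(iii)\Rightarrow(i)$.} Fix $x\in S$ and $t\ge 0$. Because $t_n\to\infty$ and $t_0\le\tau$ (after prepending $0$ if needed), let $n$ be the first index with $t_n>t$. Then either $n=0$ with $t_0\in(t,t+\tau]$, or $t_{n-1}\le t<t_n\le t_{n-1}+\tau\le t+\tau$. In either case $t'=t_n$ satisfies~\eqref{eq:T-recurrence}, which establishes $(i)$.
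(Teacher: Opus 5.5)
Your strategy is the paper's: take each increment to be the \emph{largest} return time in $(0,\tau]$, and if $t_n\uparrow T<\infty$, note that for large $n$ the time $T-t_n\in(0,\tau]$ is itself an admissible return time from $\phi(t_n,x)$, which contradicts maximality. That contradiction, and your $(i)\Rightarrow(ii)$, are fine.

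The gap is the forward-completeness step. Your greedy argument needs it in two places: you evaluate $\phi(T,x)$ at the accumulation time, and your maximum over $\{t\in[0,\tau]:\phi(t,y)\in S\}$ presupposes $[0,\tau]\subset I(y)$. But you prove it using the \emph{arbitrary} sequence: ``the $t_n$ are bounded by $T^*$; choose $n$ with $t_n>T^*-\delta$.'' Boundedness does not produce such an $n$. With non-greedy increments nothing stops the $t_n$ from accumulating at some $T<T^*-\delta$. For example, if $x\in\mathrm{int}(S)$, the admissible choices $s_n=2^{-n}s_0$ with $s_0$ small give $t_n\to 2s_0$. So $I(x)\supset[0,\infty)$ is not established, and the case $T=\sup I(x)$ of your greedy argument is left open.

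There are two repairs, and both use only what you already have.
\begin{itemize}
\item You can split cases on the greedy limit, as the paper does. If $T=\sup I(x)$, then $t_n>T-\delta$ for large $n$, and your uniform existence time at $\phi(t_n,x)\in S$ continues the solution past $T$, a contradiction. Otherwise $T\in I(x)$ and your maximality argument applies.
\item Cleaner, since it also justifies the maximum, is to prove forward completeness without any sequence. Suppose $T^*:=\sup I(x)<\infty$. The set $E:=\{t\in[0,T^*):\phi(t,x)\in S\}$ contains $0$ and is closed in $[0,T^*)$. If $\sup E<T^*$, then $\sup E\in E$, and $(ii)$ at $\phi(\sup E,x)$ yields a later element of $E$, a contradiction. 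Hence $\sup E=T^*$, and only now can you pick $t\in E$ with $t>T^*-\delta$.
\end{itemize}
Your uniform existence time on a compact neighborhood of $S$ is a fine substitute for the paper's appeal to maximal solutions leaving compact sets.

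A smaller point concerns $(iii)\Rightarrow(i)$. If the given $t_0>\tau$, prepending $0$ breaks the gap condition, and the implication is then genuinely false. Take $\dot x=-x$, $S=\{10\}\cup[-1,1]$, $\tau=1$. From $x=10$, the times $t_n=\ln 10+n$ satisfy the rest of $(iii)$, yet the trajectory does not return to $S$ within $(0,1]$. Condition $(iii)$ must therefore be read with $t_0=0$, which is what your construction produces and what the paper's proof tacitly assumes.
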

\ifthenelse{\boolean{lemmas}}{
\begin{proof} \text{ }

\noindent
     $(i)\!\!\!\implies\!\!\!(ii)$: Follows from Definition~\ref{defn:T recurrent} and choosing $t=0$.\\
     
\noindent
     $(ii)\!\!\!\implies\!\!\!(iii)$: 
     Given $x\in S$,  we build the sequence $\{t_n\}_{n\in \mathbb{N}}$ satisfying \eqref{eq:tau-decreasing-sequence-pre} and $\phi(t_n,x)\in S$ by induction.   
    For the base case, let $t_0:=0$, $x_0=\phi(t_0,x)=x$, and define:
    \[
    t_1=\max\{t \in (0, \tau] \mid \phi(t,x_0) \in S\};
    \]
    the above set of times is non-empty by hypothesis. Its supremum is actually a maximum due to the compactness of $S$ and the continuity of $\phi(t,x)$. By construction, $t_1\in I(x)$. 
    
    
    The inductive construction proceeds in a similar manner: given $t_1<t_2< \cdots t_n$, with $x_n:=\phi(t_n,x) \in S$, define:
    \begin{equation}\label{eq:t_n sequence}
    t_{n+1}=t_n + \max\{t \in (0, \tau ] \mid \phi(t,x_n) \in S\}.
    \end{equation}
    Note that $t_{n+1}$ exists analogously to the above, and satisfies $t_{n+1} - t_n \in (0,\tau]$ as required. Further, $t_{n+1}-t_n\in I(\phi(t_n,x))$, which implies that $t_{n+1}\in I(x)$. 
    
    It remains to show that $t_n\rightarrow\infty$, which we argue by contradiction. If, instead, the strictly increasing sequence of times was bounded, we would have $t_n\uparrow t^*$. If $t^*$ was the supremum of the maximal interval $I(x)$, then a standard result in differential equations \cite{coddington} implies that $\phi(t,x)$ should exit any compact set as $t\uparrow t^*$; this would contradict the fact that $x_n:=\phi(t_n,x)\in S \ \forall n$ , and $S$ compact. Therefore $[0,t^*] \subset I(x)$, $x^*=\phi(t^*,x)$ exists, and by 
    continuity of $\phi(\cdot,x)$, it follows that $\lim_{n\rightarrow\infty}x_n =x^* \in S$. 
    
    We now choose $n$ large enough, say $n^*$, such that $t^*<t_{n^*}+\tau$. 
    Since $\phi(t^*,x)\in S$, then $t^*$ is a candidate value for the induction step \eqref{eq:t_n sequence} and satisfies $t^*>t_{n^*+1}$; this contradicts the fact $t_{n^*+1}$ is the maximum such value.
    Thus, $t_n \to \infty$, as desired. In particular, $I(x)\supset [0,\infty)$.\\

    \noindent
     $(iii)\!\!\!\implies\!\!\!(i)$: 
    Given $x\in S$ and $t\geq0$, let $n^*$ be the largest $n$ s.t. $t_n\leq t$, then, it follows that $t_{n^*+1}-t\in (0,\tau]$, $\phi(t_{n^*+1},x)\in S$. 
    Definition \ref{defn:T recurrent} is satisfied with $t'=t_{n^*+1}$.
\end{proof}
}{
\noindent \textit{Proof.}~The proof is omitted due to page limits. \hfill \qed
}
\begin{remark}[Forward Completeness]\label{rem:forward complete}
As a consequence of Lemma~\ref{lem:infinite-seq}, condition $(ii)$ guarantees forward completeness of trajectories initiating in the compact set $S$. Hence, the implicit forward completeness requirement in Definition~\ref{defn:recurrent} is not restrictive when $S$ is compact.
\end{remark}

\dem{Another advantage of compact  $\tau$-recurrent sets is that they allow us to bound the distance a trajectory can travel away from it--a critical condition for stability. To show this, we recall first that the vector field \eqref{eq:system} is assumed locally Lipschitz (Assumption \ref{as:Lipschitz}). While such property suffices, it will prove convenient to obtain tighter bounds via locally one-sided Lipschitz constants.}
\aem{Another advantage of compact $\tau$-recurrent sets is the ability to bound how far trajectories can travel away from them---a critical step for stability.}

\dem{We will also introduce a notation for the maximum norm of the vector field on a (compact) set $S$,
\[
F_S:=\max_{S}\|f(x)\|.\]
When $S$ is a ball of radius $\varepsilon$ centered around a fixed point $x^*$, i.e., $S=B_\varepsilon(x^*)$, we will use $F_\varepsilon:=F_{B_\varepsilon(x^*)}$ for simplicity. Note that $F_\varepsilon\to 0$ when $\varepsilon\to0$.}
\aem{We will write $F_S := \max_{x \in S} \|f(x)\|$ for the maximum norm of the vector field on a compact set $S$, with shorthand $F_\varepsilon := F_{B_\varepsilon(x^*)}$ when $S = B_\varepsilon(x^*)$; note $F_\varepsilon \to 0$ as $\varepsilon \to 0$.}
\dem{Using these definitions, we now derive a bound on how far trajectories can go from a compact set in which they start.}

\begin{lemma}[Containment Lemma]\label{lem:containment}
    Consider system \eqref{eq:system} under Assumption \ref{as:Lipschitz}. Let $S \subset D$ be a compact set such that solutions starting in $S$ are forward complete. Then, $\mathcal{R}^{\tau}(S)$ has compact closure, 
    and defining 
    $L :=L_{\mathrm{cl}\mathcal{R}^{\tau}(S)}<\infty$, we have  
    \begin{equation}\label{eq:containment}
        \max_{t\in[0,\tau]} \mathrm{d}(\phi(t,x),S) \leq F_S \,h(\tau;L),
    \end{equation}\vspace{-2ex}
    \begin{flalign*}
        &\text{where}\qquad\quad h(\tau;L) := \begin{cases}
            \frac{e^{L\tau}-1}{L}, & L\ne0,\\
            \tau, & L=0.
        \end{cases} &
    \end{flalign*}
\end{lemma}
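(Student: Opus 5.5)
The proof has two parts: first show that the reachable tube is precompact, and then derive the distance bound by a comparison argument with the one-sided Lipschitz constant.

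\emph{Compactness of $\mathrm{cl}\,\mathcal{R}^{\tau}(S)$.} Since solutions from $S$ are forward complete, the map $(t,x)\mapsto\phi(t,x)$ is defined on $[0,\tau]\times S$. Under Assumption~\ref{as:Lipschitz} it is continuous there, by continuous dependence on initial conditions. The set $\mathcal{R}^{\tau}(S)$ is the image of the compact set $[0,\tau]\times S$ under this map, so it is compact, and in particular it equals its own closure. It lies in $D$, so $f$ is Lipschitz on it and $L\le \overline{L}_{\mathrm{cl}\mathcal{R}^\tau(S)}<\infty$. The compactness of the tube is what makes $L$ finite, and this is the only delicate point of this part.

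\emph{Distance bound.} Fix $x\in S$ and let $x(t)=\phi(t,x)$ and $e(t):=x(t)-x$ for $t\in[0,\tau]$. Both $x(t)$ and $x$ lie in $\mathcal{R}^\tau(S)$, so the one-sided Lipschitz estimate applies to this pair. By the curve norm derivative formula \eqref{eq:curve norm}, at times where $e(t)\neq 0$,
\begin{align}
\|e\|\,D^+\|e\| &= [f(x(t));e] \le [f(x(t))-f(x);e] + [f(x);e]\\
&\le L\|e\|^2 + \|f(x)\|\,\|e\|.
\end{align}
The first inequality uses subadditivity of the weak pairing, and the second uses Definition~\ref{def:one-side-L} together with the Cauchy--Schwarz property (iv). Dividing by $\|e\|$ gives
\[
D^+\|e(t)\| \le L\|e(t)\| + F_S.
\]
At times where $e(t)=0$, the same bound holds directly, because $D^+\|e(t)\|\le\|\dot e(t)\|=\|f(x)\|\le F_S$. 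The initial value is $\|e(0)\|=0$.

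\emph{Comparison step.} The solution of $\dot u=Lu+F_S$ with $u(0)=0$ is $u(t)=F_S\,h(t;L)$. The comparison lemma for Dini derivatives, applied with continuous right-hand side, then gives $\|e(t)\|\le F_S\,h(t;L)$. Since $h(\cdot;L)$ is nondecreasing for every sign of $L$, this is at most $F_S\,h(\tau;L)$ for all $t\in[0,\tau]$. Finally, $x\in S$ implies $\mathrm{d}(\phi(t,x),S)\le\|\phi(t,x)-x\|$, which yields \eqref{eq:containment}. The maximum over $t$ is attained because $t\mapsto \mathrm{d}(\phi(t,x),S)$ is continuous on the compact interval $[0,\tau]$.

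The step I expect to need the most care is applying the comparison lemma to Dini derivatives, including at the instants where $e=0$. I would handle both by directly comparing with $u_\eta(t)$, the solution of $\dot u_\eta=Lu_\eta+F_S+\eta$ with $u_\eta(0)=\eta$. I would argue by contradiction at the first crossing time and then let $\eta\downarrow0$.
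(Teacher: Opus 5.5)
Your proposal is correct and follows essentially the same route as the paper: the curve-norm derivative formula, subadditivity of the weak pairing, the one-sided Lipschitz bound and Cauchy--Schwarz give $D^+\|\phi(t,x)-x\|\le L\|\phi(t,x)-x\|+F_S$, and a Gr\"onwall/comparison step then yields $F_S\,h(t;L)\le F_S\,h(\tau;L)$. The differences are minor. You prove compactness of the tube directly, as the continuous image of $[0,\tau]\times S$, where the paper cites Sontag. You also handle instants with $\phi(t,x)=x$ through $D^+\|e\|\le\|f(x)\|\le F_S$, which obtains the differential inequality on all of $[0,\tau]$ and avoids the paper's case split into an equilibrium case, a maximal interval where $u>0$, and a periodic-orbit case.
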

\begin{proof}
Compactness of $\mathrm{cl}(\mathcal{R}^{\tau}(S))$ is proved in Proposition $5.1$ of \cite{sontag_containment}.   
Consider $x_0 \in S$ and let $x(t)=\phi(t,x_0)$, $u(t):=\|x(t)-x_0\|$, with $t\in [0,\tau]$. Observe that since $x_0\in S$, $\mathrm{d}(x(t), S) \leq u(t)$. Thus, bounding $u(t)$ will be sufficient. 

We now apply formula \eqref{eq:curve norm} with 
$x(t)-x_0$ in lieu of $x(t)$:
    \begin{align*}
     u(t) D^+u(t)& = [\dot{x}(t);x(t)-x_0] 
       = [f(x);x-x_0] \\
      & \leq [f(x)-f(x_0);x-x_0] + [f(x_0);x-x_0] \\
      & \leq L \|x-x_0\|^2 + \|f(x_0)\| \|x-x_0\| \\
        & \leq Lu(t)^2 + F_S u(t),
    \end{align*}
    where the first inequality comes from the subadditivity (i) of weak pairing, the second from 
    Definition \ref{def:one-side-L} and the Cauchy-Schwarz property (iv), and the last from $x_0\in S$. 
    
If $x_0$ is an equilibrium of \eqref{eq:system}, then $u(t)=||\phi(t,x)-x||\equiv 0$ and the bound
\eqref{eq:containment} is trivial; otherwise, since $u(0)=0$, we must have a time interval  $(0,\delta)$ where $u(t)>0$. Extend this interval maximally (but no further than $\tau$), defining: 
\[
t_1=\max\{\delta \in (0, \tau] \mid u(t)>0 \text{ in } (0,\delta)\}.
\]
We have $\tau \geq t_1>0$, and  
$$ D^+u(t) \leq  Lu(t) + F_S, \quad \forall t\in(0,t_1).$$
We are now in a position to apply a generalization of the Grönwall inequality 
from \cite{davydov2022non} Lemma 11 to yield:
$$u(t) \leq  
\frac{F_S}{L}(e^{Lt}-1)\quad\text{ or }\quad u(t) \leq 
F_S t , \;\;\forall t\in(0,t_1)\,,$$ respectively if $L\neq 0$ or $L=0$, so 
\begin{equation}\label{eq:t1 bound}
  u(t) \leq F_S \,h(t;L) \leq F_S h(\tau;L) \quad \text{ for } 0< t < t_1,  
\end{equation}
where we used the fact that $h(\cdot,L)$ is increasing and $t_1\leq \tau$. 
If $t_1=\tau$, then $F_S h(\tau;L)$ is a bound on the $u(t)$ across the entire interval $(0,\tau]$. If, instead, $t_1< \tau$, then necessarily $u(t_1)=0$ and the system has a periodic orbit of period $t_1$; \eqref{eq:t1 bound} still gives the bound $F_S \,h(\tau;L)$ over the entire period; \red{ since $\phi(t,x)=\phi(t\bmod t_1,x)$, the bound holds for all time}. Thus, \eqref{eq:containment} follows. 
\end{proof}

The Containment Lemma (Lemma \ref{lem:containment}), which provides containment guarantees for a finite time, can be combined with the recurrence property of Definition \ref{defn:T recurrent} and Lemma \ref{lem:infinite-seq} to provide trajectory bounds for all positive times. 

\begin{corollary}[Boundedness of Trajectories]\label{cor:bounded-trajectories}
    Let $S$ be a compact $\tau$-recurrent set. Then it follows that for any $x\in S$,
        \[
            \mathrm{d}(\phi(t,x),S) \leq F_S h(\tau; L),\qquad \forall t\geq 0,
        \]
    where $L :=L_{\mathrm{cl}\mathcal{R}^{\tau}(S)}$. 
    Moreover, the $\tau$-reachable tube $\mathcal{R}^{\tau}(S)$ is invariant.
\end{corollary}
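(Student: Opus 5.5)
The proof combines Lemma~\ref{lem:infinite-seq} with Lemma~\ref{lem:containment}.

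Fix $x\in S$. Since $S$ is compact and $\tau$-recurrent, condition $(ii)$ of Lemma~\ref{lem:infinite-seq} holds, so the implication $(ii)\Rightarrow(iii)$ applies. Its proof shows that $I(x)\supset[0,\infty)$, so trajectories starting in $S$ are forward complete. It also gives a sequence $t_0=0<t_1<t_2<\cdots$ with $t_{n+1}-t_n\in(0,\tau]$, $t_n\to\infty$, and $x_n:=\phi(t_n,x)\in S$. Forward completeness is exactly the hypothesis of Lemma~\ref{lem:containment}. That lemma then yields that $\mathrm{cl}\,\mathcal{R}^\tau(S)$ is compact, so $L=L_{\mathrm{cl}\mathcal{R}^\tau(S)}<\infty$ is well defined, together with the bound
\[
\max_{s\in[0,\tau]}\mathrm{d}(\phi(s,y),S)\le F_S\,h(\tau;L)\qquad\text{for every } y\in S.
\]

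For the distance bound, take any $t\ge0$. Because $t_n\to\infty$ with $t_0=0$, there is a unique $n$ with $t_n\le t<t_{n+1}$. Set $s:=t-t_n\in[0,\tau)$. By the semigroup property, $\phi(t,x)=\phi(s,x_n)$ with $x_n\in S$. Applying the bound above with $y=x_n$ gives $\mathrm{d}(\phi(t,x),S)\le F_S h(\tau;L)$.

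For invariance of $\mathcal{R}^\tau(S)$, take $z=\phi(s,y)$ with $y\in S$ and $s\in[0,\tau]$, and let $t\ge0$. Then $\phi(t,z)=\phi(s+t,y)$. Choose the recurrence sequence $\{t_n\}$ for $y$ and pick $n$ with $t_n\le s+t<t_{n+1}$. Then $\phi(s+t,y)=\phi(s+t-t_n,\phi(t_n,y))$, where $\phi(t_n,y)\in S$ and $s+t-t_n\in[0,\tau]$. Hence $\phi(t,z)\in\mathcal{R}^\tau(S)$, so the tube is invariant.

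The argument is mostly bookkeeping. The only delicate point is ordering: Lemma~\ref{lem:containment} requires forward completeness before $L$ is even finite, so that must be established first via Lemma~\ref{lem:infinite-seq}.
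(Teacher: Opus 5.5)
Your proposal is correct and follows essentially the same route as the paper: build the recurrence sequence $t_n$ via Lemma~\ref{lem:infinite-seq} (which also gives forward completeness), apply Lemma~\ref{lem:containment} from each $x_n=\phi(t_n,x)\in S$ on the window $[t_n,t_{n+1})$, and get invariance of $\mathcal{R}^{\tau}(S)$ by writing $\phi(t,\phi(s,y))=\phi(s+t-t_n,\phi(t_n,y))$ with $s+t-t_n\in[0,\tau]$. Your explicit ordering of the steps, establishing forward completeness before using that $L$ is finite, is a small clarification of a point the paper states only briefly.
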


\begin{proof}
Applying Lemma \ref{lem:infinite-seq}, given $x\in S$, we can build a sequence of times $t_n \to \infty$ such that $x_n=\phi(t_n,x) \in S$, and $t_{n+1} - t_n \leq \tau$. Also trajectories starting in $S$ are forward complete. Applying Lemma \ref{lem:containment} starting from $x_n\in S$ we conclude that:
\[
 \mathrm{d}(\phi(t,x), S) \leq F_S h(\tau; L) \text{ for }t\in [t_{n},t_{n+1}]. 
\]
Further, since $t_n$ is arbitrary and $t_n\rightarrow \infty$, we conclude that the bound holds for all time $t\ge0$. 

For the second claim, observe first that if $x\in S$, then $\phi(t,x) \in \mathcal{R}^{\tau}(S)$ for all $t\geq 0$. This follows by placing $t$ in an interval 
$t \in [t_n, t_{n+1})$, and noting that $\phi(t,x) = \phi(t-t_n,x_n)$ with $x_n\in S$ and $t-t_n \leq \tau$. 
Now, if $y\in \mathcal{R}^{\tau}(S)$, then $y=\phi(t',x)$ for $t'\in[0,\tau]$ and $x\in S$, and therefore 
$\phi(t,y) = \phi(t+t',x)\in \mathcal{R}^{\tau}(S)$ $\forall t \ge0$, as required for (forward) invariance. 
\end{proof}

We finalize this section, noting that  Corollary \ref{cor:bounded-trajectories} imbues compact $\tau$-recurrent sets with the same functional property of compact invariant sets, i.e., bounding trajectories. This provides the cornerstone to the development of a recurrence-based stability theory.
\section{Recurrent Lyapunov Functions}\label{ct-rlf}

\dem{Having established the ability to bound trajectories using compact $\tau$-recurrent sets, we now introduce the modified conditions on a function $V:D\to\mathbb{R}_{\geq0}$, that relax the standard Lyapunov conditions for stability. In contrast to the classical counterpart, we do not require $V$ to be monotonically non-increasing along trajectories. Rather, for any given initial $x\in D$, we allow $\tau>0$ units of time to elapse before requiring the function to meet any requirements on its value. This leads to the proposed definition of Recurrent Lyapunov Functions.}
\aem{We now introduce conditions on a function $V:D\to\mathbb{R}_{\geq 0}$ that relax the classical Lyapunov requirement of monotonic non-increase along trajectories: we allow $\tau>0$ units of time to elapse before requiring any condition on $V$, leading to the notion of Recurrent Lyapunov Functions.}


\begin{defn} [Recurrent Lyapunov Function (RLF)]\label{defn:RLF}
Given an equilibrium point $x^*\in D$ of \eqref{eq:system}, a set $S\subseteq D$ satisfying $x^*\in \mathrm{int}(S)$, and $\tau>0$, a \emph{continuous} function $V : D \to \mathbb{R}_{\geq 0}$ is a \textbf{Recurrent Lyapunov Function} over $S$ if:
\begin{enumerate}[$(i)$]
    \item $V$ is \textbf{positive definite} around $x^*$, that is,
            \begin{align}\label{eq:positive-definite}
                V(x)>0,\; \forall x\in D\backslash\{x^*\}, \text{ and } V(x^*)=0.
            \end{align}
    \item $V$ is \textbf{$\tau$-recurrent} over $S$, that is,
            \begin{align}\label{eq:V-tau-recurrent}
                \min_{s \in T_{S}(x;\,\tau)} V(\phi(s,x)) \leq V(x),\quad\forall x\in S.
            \end{align}
\end{enumerate}
We further say $V$ is a \textbf{Strict RLF} (SRLF) over $S$ if, in addition, the inequality in (ii) is strict for $x \in S\setminus\{x^*\}$:
\begin{align}\label{eq:V-strictly-tau-decreasing}
    \min_{s \in T_{S}(x;\,\tau)} V(\phi(s,x)) < V(x),\quad\forall x\in S\backslash\{x^*\}.
\end{align}
\end{defn}

\dem{We make the following remarks about Definition \ref{defn:RLF}. First, the minimum in \eqref{eq:V-tau-recurrent} is taken over the non-necessarily closed set $T_{S}(x;\tau)$. For the $\min$ to be finite as required, there must exist $t\in(0,\tau]$ with $\phi(t,x)\in S$. Second, we only require $V$ to be continuous; while classical Lyapunov theory can be developed for non-differentiable functions, it usually requires increased complexity in the analysis. Our results can be readily stated for only continuous $V$. Finally, the $\tau$-recurrent property \eqref{eq:V-tau-recurrent} acts as a substitute to the standard differential inequality: $\dot V = \nabla V(x)^Tf(x)\leq0$. As we will see next, this condition allows us to substitute the standard invariance property with the more relaxed notion of recurrence.}
\aem{The minimum in~\eqref{eq:V-tau-recurrent} is taken over the (possibly non-closed) set $T_S(x;\tau)$, so finiteness requires the existence of $t \in (0,\tau]$ with $\phi(t,x) \in S$. We require $V$ only to be continuous, avoiding the complications of non-differentiable Lyapunov theory. The $\tau$-recurrent property thus substitutes the standard differential inequality $\dot V = \nabla V(x)^T f(x) \leq 0$, replacing strict invariance with the more relaxed notion of recurrence.}

\begin{lemma}
\label{lem:recurrence of V}
Given any $c\geq0$ and a compact set $S\subseteq D$. If $V:D \rightarrow \mathbb{R}_{\geq 0}$ is continuous and $\tau$-recurrent over $S$ (cf. $(ii)$ in Definition \ref{defn:RLF}), then, the following holds:
{\begin{enumerate}
    \item[(i)] The set $S$ is $\tau$-recurrent. 
    \item[(ii)] The set $V_{\leq c}\cap S = \{x \in S \mid V(x) \leq c\}$ is $\tau$-recurrent.
\end{enumerate}}
\end{lemma}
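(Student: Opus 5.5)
The plan is to reduce both claims to condition $(ii)$ of Lemma~\ref{lem:infinite-seq}. That condition characterizes $\tau$-recurrence for compact subsets of $D$ and only requires a single return within $(0,\tau]$. So in each case I will check compactness of the set and then show that every initial point returns once within $\tau$.

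For claim (i), fix $x\in S$. The $\tau$-recurrence condition \eqref{eq:V-tau-recurrent} asks that a minimum over $T_S(x;\tau)$ exist and be at most $V(x)$. I will read this, as the remark after Definition~\ref{defn:RLF} does, as requiring $T_S(x;\tau)\neq\emptyset$, since otherwise the minimum is $+\infty$. This gives some $t\in(0,\tau]\cap I(x)$ with $\phi(t,x)\in S$. Because $S$ is compact, condition $(ii)$ of Lemma~\ref{lem:infinite-seq} holds, and so $S$ is $\tau$-recurrent.

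For claim (ii), let $K:=V_{\le c}\cap S$. Since $V$ is continuous, $V_{\le c}$ is closed in $D$. Its intersection with the compact set $S\subset D$ is therefore a closed subset of a compact set, hence compact. If $K$ is empty the statement is vacuous. Otherwise fix $x\in K$ and use the minimum in \eqref{eq:V-tau-recurrent}, which I take to be attained: there is $s^\star\in T_S(x;\tau)$ with
\[
V(\phi(s^\star,x))=\min_{s\in T_S(x;\tau)}V(\phi(s,x))\le V(x)\le c .
\]
Then $\phi(s^\star,x)\in S$ and $V(\phi(s^\star,x))\le c$, so $\phi(s^\star,x)\in K$ with $s^\star\in(0,\tau]$. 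Applying Lemma~\ref{lem:infinite-seq}$(ii)\Rightarrow(i)$ to the compact set $K$ gives $\tau$-recurrence.

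The only delicate point is that $T_S(x;\tau)$ need not be closed, so it is not automatic that the minimum is attained. If one only had an infimum, I would argue as follows. Forward completeness on $[0,\tau]$ follows from the same return argument, and the set $\{s\in[0,\tau]:\phi(s,x)\in S\}$ is closed because $S$ is compact and $\phi(\cdot,x)$ is continuous. Hence $T_S(x;\tau)$ is that closed set with possibly $s=0$ removed. The only way the infimum could fail to be attained is along a sequence $s_n\downarrow 0$. This case is harmless if some $s>0$ achieves a value at most $V(x)$ anyway. If not, I would rely on the paper's explicit use of ``$\min$'', which asserts attainment, so the displayed step above is legitimate. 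I expect this attainment issue to be the main obstacle; the rest is routine.
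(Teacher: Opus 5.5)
Your proof is correct and matches the paper's. Both apply condition $(ii)$ of Lemma~\ref{lem:infinite-seq} to the compact set $V_{\le c}\cap S$, and both use the minimum in \eqref{eq:V-tau-recurrent} to produce a return time in $(0,\tau]$ landing in $V_{\le c}\cap S$. The only cosmetic difference is that the paper obtains (i) as the special case $c\ge\max_S V$ of (ii) rather than arguing it separately; like you, it simply takes attainment of the minimum as part of the hypothesis.
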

\ifthenelse{\boolean{lemmas}}{
\begin{proof}
    We start by noting that since $V$ is continuous, it has a finite maximum over  compact $S$; hence, there exists $c$ large enough such that $S\cap V_{\leq c} = S$. As a result, property $(i)$ follows directly from $(ii)$.
    
    To prove $(ii)$ we use the characterization $(ii)$ of Lemma \ref{lem:infinite-seq}. By hypothesis, for any $x\in S\cap V_{\leq c}$, one can find $t'\in (0,\tau]$ such that $\phi(t',x)\in S\cap V_{\leq c}$. Since $S\cap V_{\leq c}$ is compact, by Lemma \ref{lem:infinite-seq} it is $\tau$-recurrent.        
\end{proof}}
{
\noindent \textit{Proof.}~The proof is omitted due to page limits. \hfill \qed
}

We are now ready to present the main result of this section, which states that the existence of an RLF is sufficient to guarantee the stability of the associated equilibrium point.

\begin{theorem}[Stability]\label{thm:recurrence-stability}
Consider system \eqref{eq:system} under Assumption \ref{as:Lipschitz}, with an equilibrium point $x^*\in D$, and a compact
set $S\subseteq D$ satisfying $x^*\in \mathrm{int}(S)$. Then, if $V: D\rightarrow \mathbb{R}_{\geq0}$ is an RLF over $S$, the equilibrium $x^*$ is stable.
\end{theorem}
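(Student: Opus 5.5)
The plan is to use small sub-level sets of $V$ intersected with $S$ as compact $\tau$-recurrent sets. Corollary~\ref{cor:bounded-trajectories} then traps trajectories near these sets, and the trapping radius shrinks as the level $c\to 0$.

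Fix $\varepsilon>0$; without loss of generality $\varepsilon$ is small enough that $B_\varepsilon(x^*)\subset \mathrm{int}(S)$. For $c>0$ set $\Omega_c := V_{\leq c}\cap S$. This set is compact, since $V$ is continuous and $S$ is compact. By Lemma~\ref{lem:recurrence of V}(ii) it is $\tau$-recurrent. Corollary~\ref{cor:bounded-trajectories} then gives, for every $x\in\Omega_c$ and $t\ge0$,
\[
\mathrm{d}(\phi(t,x),\Omega_c)\le F_{\Omega_c}\,h(\tau;L_c), \qquad L_c:=L_{\mathrm{cl}\mathcal{R}^\tau(\Omega_c)}.
\]
To make $L_c$ uniform in $c$, note that $\Omega_c\subseteq S$ for every $c$. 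Hence $\mathcal{R}^\tau(\Omega_c)\subseteq\mathcal{R}^\tau(S)$, and $\mathcal{R}^\tau(S)$ has compact closure by Lemma~\ref{lem:containment}, because $S$ is $\tau$-recurrent by Lemma~\ref{lem:recurrence of V}(i) and so trajectories from $S$ are forward complete. A one-sided Lipschitz constant over a subset is no larger than over the superset, so $L_c\le L:=L_{\mathrm{cl}\mathcal{R}^\tau(S)}$. Since $h(\tau;\cdot)$ is increasing in $L$, the bound holds with $h(\tau;L)$ in place of $h(\tau;L_c)$.

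The key step is to show that $\Omega_c$ shrinks to $\{x^*\}$ as $c\to0$, i.e. $r(c):=\max_{x\in\Omega_c}\|x-x^*\|\to0$. Suppose not. Then there are $c_n\to0$ and $x_n\in S$ with $V(x_n)\le c_n$ and $\|x_n-x^*\|\ge\eta>0$. By compactness of $S$, a subsequence converges to some $\bar x\in S$ with $\|\bar x-x^*\|\ge\eta$. Continuity gives $V(\bar x)=0$, which contradicts positive definiteness (Definition~\ref{defn:RLF}(i)). Consequently $F_{\Omega_c}\le F_{r(c)}\to0$, because $f$ is continuous and $f(x^*)=0$. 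This is the main point needing care: the argument relies on compactness of $S$ rather than on properness of $V$.

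Choose $c>0$ such that $r(c)+F_{r(c)}h(\tau;L)\le\varepsilon$. Then for every $x\in\Omega_c$ and $t\ge0$,
\[
\|\phi(t,x)-x^*\|\le r(c)+\mathrm{d}(\phi(t,x),\Omega_c)\le\varepsilon.
\]
Finally, we need a ball around $x^*$ contained in $\Omega_c$. Since $x^*\in\mathrm{int}(S)$, $V$ is continuous, and $V(x^*)=0$, there is $\delta>0$ with $B_\delta(x^*)\subseteq S$ and $V<c$ on $B_\delta(x^*)$. Thus $B_\delta(x^*)\subseteq\Omega_c$, and Definition~\ref{defn:stability} is satisfied.
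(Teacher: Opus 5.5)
Your proof is correct, but it builds the trapping set differently from the paper.

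The paper localizes first. It picks $\varepsilon'$ with $\varepsilon'+F_{\varepsilon'}h(\tau;L)\le\varepsilon$, takes $\beta$ below $\min V$ on the annulus $\varepsilon'\le\|x-x^*\|\le\varepsilon$, and works with $\Omega_\beta=\{x\in B_{\varepsilon'}(x^*):V(x)\le\beta\}$. Because this set is cut by a ball rather than by $S$, Lemma~\ref{lem:recurrence of V} does not apply directly, so the paper proves $\tau$-recurrence by hand. The RLF condition returns a point of $S$ with $V\le\beta$. The containment bound places that point in $B_\varepsilon(x^*)$, and the annulus threshold then forces it into $B_{\varepsilon'}(x^*)$. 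This is the classical ``$V$ below its minimum on a sphere'' device.

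You instead intersect with $S$ itself, so recurrence of $\Omega_c=V_{\le c}\cap S$ follows immediately from Lemma~\ref{lem:recurrence of V}(ii). You pay for this with the compactness argument showing $r(c)\to0$, which uses positive definiteness of $V$ on all of $S$ rather than only on an annulus. Compactness of $S$ is already needed for forward completeness and for finiteness of $L$, so this costs nothing extra, and your route is somewhat shorter.

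The paper's route is more local and mirrors the classical Lyapunov argument, producing the explicit invariant set $\mathcal{R}^\tau(\Omega_\beta)\subset B_\varepsilon(x^*)$. Your construction yields the analogous invariant set $\mathcal{R}^\tau(\Omega_c)\subset B_\varepsilon(x^*)$ via the second part of Corollary~\ref{cor:bounded-trajectories}, though your argument does not need it.
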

\begin{proof}
It suffices to show that for any 
$\varepsilon>0$, there exists a set $I\subset B_\varepsilon(x^*)$, $I$ invariant under \eqref{eq:system} and with $x^*\in \mathrm{int}(I)$. Without loss of generality, assume $B_\varepsilon(x^*)\subset S$.

Note first that from Lemma \ref{lem:recurrence of V}, $S$ is $\tau$-recurrent, so trajectories starting in $S$ are forward complete; furthermore, by Lemma \ref{lem:containment}, $L = L_{\mathrm{cl}\left(\mathcal{R}^{\tau}(S)\right)}$ is finite. 

Find $\varepsilon'>0$ small enough such that 
\begin{equation}\label{eq:epsln}
\varepsilon' + F_{\varepsilon'}h(\tau; L)\leq\varepsilon.
\end{equation}
Now let $\alpha=\min_{\varepsilon'\leq \|x-x^*\|\leq \varepsilon} V(x)$; by construction, $\alpha>0$. Select $\beta$ such that $0 < \beta < \alpha$ and introduce the compact set
\begin{equation}\label{eq.omegabeta}
    \Omega_\beta  := \{ x\in B_{\varepsilon'}(x^*) : V(x)\leq \beta\}.
\end{equation}
Claim 1: Let $I:=\mathcal{R}^\tau(\Omega_\beta)$. Then $x^* \in \mathrm{int}(I)$, 
$I \subset B_\varepsilon(x^*)$.

Given \eqref{eq:positive-definite}, $x^* \in \mathrm{int}(\Omega_\beta)$; also $\Omega_\beta \subset I$, so $x^* \in \mathrm{int}(I)$. To establish $I \subset B_\varepsilon(x^*)$, apply the Containment Lemma \ref{lem:containment} to 
$\Omega_\beta$: if $x\in \Omega_\beta$, then for every $t\in (0,\tau]$ we have:
\begin{equation}\label{eq:contain omega beta}
d(\phi(t,x),\Omega_\beta) \leq F_{\varepsilon'} h(\tau;L).
\end{equation}
Since $\Omega_\beta \subset B_{\varepsilon'}(x^*)$, the triangle inequality gives the norm bound $\|\phi(t,x) -x^* \|\leq \varepsilon' + F_{\varepsilon'}h(\tau; L)\leq\varepsilon$, as claimed.  

\noindent 
Claim 2: $\Omega_\beta$ is $\tau$-recurrent. 

Given $x\in \Omega_\beta$, by hypothesis 
there exists $t'\in (0,\tau]$ such that $x'=\phi(t',x) \in S$ and $V(x')\leq \beta$. 
Since $x'\in \mathcal{R}^\tau(\Omega_\beta)$ we have $\|x'-x^*\| \leq \varepsilon$ by Claim 1. However, $V(x')< \alpha$ so $\|x'-x^*\|$ cannot be in the interval $[\varepsilon',\varepsilon]$; so $x' \in B_{\varepsilon'}(x^*)\cap V_{\leq \beta} = \Omega_\beta$, and thus this set is $\tau$-recurrent. 

Now apply Corollary \ref{cor:bounded-trajectories} to conclude that 
$I =\mathcal{R}^\tau(\Omega_\beta)$ is an \emph{invariant} set. So $I$ satisfies the requirements set up at the beginning of the proof. 
\end{proof}

\section{Asymptotic Stability}\label{ct-asymptotic}

\dem{Now that we have proven stability with RLFs, we wish to expand the theory to incorporate asymptotic stability. Similarly to Lyapunov's Direct Method, the extension essentially consists of strengthening the condition from a non-strict inequality to a strict one.}
\aem{We now extend the theory to asymptotic stability via the Strict RLF condition of Definition~\ref{defn:RLF}.}


Since a Strict RLF is also an RLF, all properties established in the previous section continue to hold. In particular, if $S$ is compact, then $S$ is $\tau$-recurrent, trajectories initiating in $S$ are forward complete, and remain bounded for all time by Corollary~\ref{cor:bounded-trajectories}. Moreover, Theorem~\ref{thm:recurrence-stability} guarantees stability of the equilibrium.
The strict inequality in \eqref{eq:V-strictly-tau-decreasing} plays the role of the classical Lyapunov condition $\dot V(x)<0$ for all $x\in S\setminus\{x^*\}$. While classical Lyapunov theory enforces monotonic decrease of $V(\phi(t,x))$ for all $t\ge0$, our condition only requires a strict decrease at some return time within $(0,\tau]$. In particular, it implies that trajectories starting from compact sub-level sets of $V$ return to their interior within $\tau$ units of time, thereby providing the mechanism needed to establish asymptotic stability.

\begin{theorem}[Asymptotic Stability]\label{thm:asymptotic-recurrence-stability}
Consider system \eqref{eq:system} under Assumption \ref{as:Lipschitz}, with an equilibrium point $x^*\in D$ and a compact set $S\subseteq D$ satisfying $x^*\in \mathrm{int}(S)$. Then, if $V: D\rightarrow \mathbb{R}_{\geq0}$ is an SRLF over $S$, the equilibrium $x^*$ is asymptotically stable on the set $S$.
\end{theorem}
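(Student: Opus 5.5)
Stability is already given by Theorem~\ref{thm:recurrence-stability}, since an SRLF is in particular an RLF. It therefore remains to show attractivity on $S$. Fix $x\in S$. By Lemma~\ref{lem:recurrence of V} and Lemma~\ref{lem:infinite-seq}, I would build, as in the proof of $(ii)\Rightarrow(iii)$, a sequence of return times $t_n\to\infty$ with $t_{n+1}-t_n\in(0,\tau]$, $x_n:=\phi(t_n,x)\in S$, and $V(x_{n+1})\le V(x_n)$. The sequence is obtained by choosing at each step a return time attaining the minimum in \eqref{eq:V-strictly-tau-decreasing}. That minimum is attained: the set of $s\in(0,\tau]$ with $\phi(s,x_n)\in S$ and $V(\phi(s,x_n))\le V(x_n)$ is nonempty and closed away from $0$, and near $0$ we can argue separately. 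To avoid this subtlety, I would instead pick any $s_n\in T_S(x_n;\tau)$ with $V(\phi(s_n,x_n))<V(x_n)$ whenever $x_n\neq x^*$. If some $x_n=x^*$, the trajectory stays at the equilibrium and we are done. The same bounded-gap argument used in Lemma~\ref{lem:infinite-seq} ensures $t_n\to\infty$, provided each step is taken as the \emph{largest} admissible time; the admissible set is compact by continuity of $V$ and $\phi$ and compactness of $S$, so this choice is available.

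Since $V(x_n)$ is nonincreasing and bounded below, it converges to some $c\ge0$. The key step is to show $c=0$. The $x_n$ lie in the compact set $S$, so some subsequence converges to a point $\bar x\in S$ with $V(\bar x)=c$. Suppose $c>0$, so $\bar x\neq x^*$. Strictness at $\bar x$ gives $s\in(0,\tau]$ with $\phi(s,\bar x)\in S$ and $V(\phi(s,\bar x))<c$.

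This is the main obstacle. Continuity of the flow gives $V(\phi(s,x_{n_k}))<c$ for large $k$, but $\phi(s,x_{n_k})$ need not lie in $S$, so it is not immediately a valid return time. The problem disappears if $\phi(s,\bar x)\in\mathrm{int}(S)$, since then continuity keeps $\phi(s,x_{n_k})$ inside $S$. I would try to force this by working with sublevel sets. Using stability, there is a neighborhood $U\subset\mathrm{int}(S)$ of $x^*$ that is invariant; more usefully, the argument of Theorem~\ref{thm:recurrence-stability} shows the entire trajectory is bounded. If $\phi(s,\bar x)\in\partial S$, I would use the maximality choice of return times together with the fact that $\phi(s,x_{n_k})\in\mathcal{R}^\tau(S)$. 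Failing that, I would fall back on a lower-semicontinuity argument: the function $W(y):=\min_{s\in T_S(y;\tau)}V(\phi(s,y))$ satisfies $W<V$ pointwise on $S\setminus\{x^*\}$, and I would show $\limsup W(x_{n_k})\le W(\bar x)$ where needed, or else restrict attention to a compact sublevel set $\Omega_\beta\subset\mathrm{int}(S)$ reached by the trajectory.

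Once the boundary issue is handled, $V(x_{n_k+1})\le V(\phi(s,x_{n_k}))<c$ for large $k$, which contradicts $V(x_n)\downarrow c$. Hence $c=0$. Positive definiteness, continuity of $V$, and compactness of $S$ then give $x_n\to x^*$. Finally, stability (Theorem~\ref{thm:recurrence-stability}) upgrades this to $\phi(t,x)\to x^*$: for any $\varepsilon>0$, take the corresponding $\delta$, find $n$ with $\|x_n-x^*\|\le\delta$, and conclude $\|\phi(t,x)-x^*\|\le\varepsilon$ for all $t\ge t_n$.
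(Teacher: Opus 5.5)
Your proposal has a genuine gap, at exactly the step you label ``the main obstacle,'' and none of your fallbacks resolves it. From $\bar x=\lim_k x_{n_k}$ with $V(\bar x)=c>0$ you get $s\in(0,\tau]$ with $\phi(s,\bar x)\in S$ and $V(\phi(s,\bar x))<c$. But when $\phi(s,\bar x)\in\partial S$, the condition $\phi(s,\cdot)\in S$ does not pass to nearby initial conditions. So $s$ need not lie in $T_S(x_{n_k};\tau)$, and you cannot conclude $V(x_{n_k+1})<c$.

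That inequality also requires $t_{n_k+1}-t_{n_k}$ to be a minimizer of $V(\phi(\cdot,x_{n_k}))$ over $T_S(x_{n_k};\tau)$. The ``largest time with decrease'' rule you switched to does not give this.

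The fallbacks do not close the gap:
\begin{itemize}
\item The hypothesis only guarantees a return to $S$, possibly onto $\partial S$, so you cannot force $\phi(s,\bar x)\in\mathrm{int}(S)$.
\item The function $W(y)=\min_{s\in T_S(y;\tau)}V(\phi(s,y))$ is in general \emph{not} upper semicontinuous, for precisely this boundary reason.
\item Restricting to a sublevel set $\Omega_\beta$ near $x^*$ ``reached by the trajectory'' presupposes the attractivity you are trying to prove.
\end{itemize}

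The missing idea is to minimize $V$ over the $\omega$-limit set rather than along your sequence. Then the SRLF condition is applied at a single point and never has to be transferred to neighbors. Let $\Omega$ be the $\omega$-limit set of $\phi(\cdot,x)$. It is compact and invariant because the trajectory is bounded (Corollary~\ref{cor:bounded-trajectories}). It meets $S$ because $S$ is compact and visited at times $t_n\to\infty$ (Lemmas~\ref{lem:recurrence of V} and~\ref{lem:infinite-seq}).

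Suppose $x^*\notin\Omega$, and let $\bar x$ minimize $V$ on $\Omega\cap S$, so $V(\bar x)>0$. The SRLF condition at $\bar x$ itself gives $s\in(0,\tau]$ with $\phi(s,\bar x)\in S$ and $V(\phi(s,\bar x))<V(\bar x)$. Invariance gives $\phi(s,\bar x)\in\Omega$, so $\phi(s,\bar x)\in\Omega\cap S$ contradicts minimality.

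Membership in $S$ now comes directly from the hypothesis at $\bar x$, so the boundary issue never arises. The monotone sequence and its selection rule become unnecessary. Once $x^*\in\Omega$, your final stability step gives $\phi(t,x)\to x^*$ exactly as you wrote it.
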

\begin{proof}
The stability requirement is already established by Theorem \ref{thm:recurrence-stability} and the fact that an SRLF is also an RLF. 
Thus, we are only left to show the attractivity of $x^*$ on the set $S$.

Consider $x_0\in S\backslash\{x^*\}$, and the trajectory 
$\phi(t,x_0)$, $t\geq 0$ initiating in this point. It is bounded by Corollary \ref{cor:bounded-trajectories}, so consider its $\omega$-limit set $\Omega$, which is a compact set in ${\mathbb{R}^d}$. 

\noindent Claim: $x^* \in \Omega$. 


Assume $x^* \not \in \Omega$. Consider the compact set $\Omega\cap S$, 
\[
\bar{v} := \min_{\Omega \cap S} V(x),
 \ \  \text{and an arbitrary minimizer } \ \ \bar{x}\in \Omega\cap S,
\]
such that $V(\bar x)=\bar v$. Since $V(x)$ is only zero at $x^*\not \in  \Omega\cap S$, we must have $\bar{v}=V(\bar{x})>0$. Now by the SRLF hypothesis at $\bar{x}$, there exists $s\in (0,\tau]$ such that $\phi(s,\bar{x}) \in S$ and
\[
V(\phi(s,\bar{x}))<\bar{v}.
\]
But $\phi(s,\bar{x})\in \Omega$ since the $\omega$-limit is an invariant set, therefore $\phi(s,\bar{x})\in \Omega \cap S$, which contradicts the definition of $\bar{v}$. This establishes the claim.

Now, since $x^*$ is a stable equilibrium point and $x^*\in \Omega$, we must have $\phi(t,x_0)\to x^*$ as $t\to \infty$. \\ 
Explicitly: given $\varepsilon>0$, by stability choose $\delta>0$ such that
$\phi(t,B_\delta(x^*))\subset B_\varepsilon(x^*)$; now from the $\omega$-limit take $t_1:\phi(t_1,x_0) \in B_\delta(x^*)$. Then for any $t\geq t_1$, $\phi(t,x_0)=\phi(t-t_1,\phi(t_1,x_0)) \in B_\varepsilon(x^*)$.
\end{proof}

We point out that the requirement of $S$ to be compact in Theorem \ref{thm:asymptotic-recurrence-stability} can be extended to a global setting, as follows.

\begin{corollary}[Global Asymptotic Stability]\label{cor:global asymptotic stability}
Let Assumption \ref{as:Lipschitz} hold. Consider an equilibrium point $x^*\in D$ of \eqref{eq:system}. Then, if $V: D\rightarrow \mathbb{R}_{\geq0}$ is an SRLF over $D$, and has \textbf{compact sub-level sets} $V_{\leq c}\subset D$, $\forall c\geq0$, then the equilibrium $x^*$ is globally asymptotically stable.
\end{corollary}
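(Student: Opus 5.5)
The plan is to reduce to Theorem~\ref{thm:asymptotic-recurrence-stability} applied on compact sets of the form $S_c := V_{\leq c}$, which are subsets of $D$ and compact by hypothesis.

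\emph{Stability and local setting.} Fix the sublevel set $S_1 := V_{\leq 1}$. Since $V$ is continuous and positive definite, $x^*\in \mathrm{int}(V_{\leq c})$ for every $c>0$: indeed, $V(x^*)=0<c$ and $\{V<c\}$ is open. The next step is to check that $V$ is an SRLF over each $S_c$. The obstacle is that condition \eqref{eq:V-strictly-tau-decreasing} over $D$ only guarantees a return time $s\in(0,\tau]$ with $\phi(s,x)\in D$, not necessarily in $S_c$. However, the return point satisfies $V(\phi(s,x))<V(x)\le c$, so $\phi(s,x)\in V_{\leq c}=S_c$. Hence $T_{S_c}(x;\tau)$ contains this $s$, and the minimum over $T_{S_c}(x;\tau)$ is at most $V(\phi(s,x))<V(x)$. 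At $x^*$ the non-strict inequality holds trivially, since $x^*$ is an equilibrium and $V(\phi(s,x^*))=0$. The SRLF property over $S_c$ then follows.

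One technical point must be verified: the minimum in the definition over $T_{S_c}$ must be attained, or at least the inequality must hold in the inf sense. Since the minimum over the larger set $T_D$ is assumed to exist, I would read the condition as asserting the existence of such an $s$. That is what is used above, and it is all that Theorem~\ref{thm:asymptotic-recurrence-stability} uses.

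\emph{Global attractivity.} Given any $x_0\in D$, set $c:=\max\{V(x_0),1\}$. Then $x_0\in S_c$, $S_c$ is compact, and $x^*\in \mathrm{int}(S_c)$. Theorem~\ref{thm:asymptotic-recurrence-stability} applied with $S=S_c$ gives $\phi(t,x_0)\to x^*$, with forward completeness coming from Lemma~\ref{lem:recurrence of V} together with Remark~\ref{rem:forward complete}. Stability is obtained from Theorem~\ref{thm:recurrence-stability} applied on $S_1$, and it is a local property independent of $c$. Combining the two, $x^*$ is stable and attractive on $\bigcup_c S_c = D$, i.e., globally asymptotically stable.

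The main obstacle, which is mild, is the transfer of the recurrence condition from $D$ to the compact sublevel set $S_c$. The strict decrease of $V$ handles this automatically by keeping the return point inside $V_{\leq c}$.
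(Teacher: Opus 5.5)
Your proposal is correct and follows essentially the same route as the paper: restrict to the compact sublevel set $V_{\leq c}$ with $c\geq V(x_0)$, observe that the strict decrease $V(\phi(s,x))<V(x)\leq c$ keeps the return point inside $V_{\leq c}$ so that $V$ is an SRLF there, and invoke Theorem~\ref{thm:asymptotic-recurrence-stability}. Your version is slightly more careful than the paper's, since you verify the transferred condition at every point of $V_{\leq c}$ (the paper states it only for the chosen initial condition) and you handle $x_0=x^*$ via $c=\max\{V(x_0),1\}$.
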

\begin{proof}
    Pick any $x\in D\backslash\{x^*\}$ and let $c:=V(x)>0$.   
    Since $V$ is an SRLF over $D$, property \eqref{eq:V-strictly-tau-decreasing} implies there exists $t'\in(0,\tau]$ satisfying 
    \[
    V(\phi(t',x)) < V(x)=c.
    \]
Note then that $\phi(t',x)\in V_{\leq c}$; therefore $V$ satisfies the conditions of an SRLF over the set $S:=V_{\leq c}$. Since $V_{\leq c}$ is compact we can apply Theorem \ref{thm:asymptotic-recurrence-stability} to claim stability of $x^*\in \mathrm{int} V_{\leq c}$ and attractivity of $\phi(t,x)$. Finally, since $x$ was chosen arbitrarily within $D$, the result follows.
\end{proof}

\section{Exponential Stability}\label{ct-exponential}



\dem{In the section we pursue conditions on the function $V$ that enforce \emph{exponential} stability. In classical Lyapunov analysis, this is verified by ensuring an exponential decrease in the Lyapunov function along trajectories:}
\aem{We now seek conditions on $V$ that enforce \emph{exponential} stability. Classical Lyapunov analysis requires an exponential decrease along trajectories,}
\begin{equation}\label{eq:standard exp lyapunov}
V(\phi(t,x)) \leq e^{-\alpha t} V(x),\quad \forall t\geq 0,
\end{equation}
\dem{for some positive constant $\alpha$. As before, such a condition tightly couples the geometry of $V$ to trajectories, significantly complicating the search for such functions.}
\aem{which tightly couples the geometry of $V$ to trajectories and complicates the search.} 

Consistently with the general approach of our paper, we will give conditions on $V$ that ensure exponential convergence at \emph{recurrent} times; from there we will prove exponential convergence for all times. For this purpose, we introduce a definition that relaxes the requirement in \eqref{eq:standard exp lyapunov}. 


\begin{defn}[Exponential Recurrent Lyapunov Function (ERLF)]\label{defn:ERLF}
Given an equilibrium point $x^*\in D$ of \eqref{eq:system} and a set $S\subseteq D$ satisfying $x^*\in \mathrm{int}(S)$. We say that a \emph{continuous} function $V : D \to \mathbb{R}_{\geq 0}$ is an \textbf{Exponential Recurrent Lyapunov Function} over the set $S$ if the following properties hold:
\begin{enumerate}[$(i)$]
    \item $V$ is \textbf{positive definite \red{on $D$} and linearly contained} around $x^*$, that is, there exist constants $a_1,a_2>0$ such that
            \begin{align}\label{eq:linearly-contained}
                a_1\|x - x^*\|\leq V(x)\leq a_2\|x - x^*\|,\;\forall x\in \red{D}.
            \end{align}

    \item $V$ is \textbf{$\alpha$-exponentially $\tau$-recurrent} over $S$, that is, there exist constants $\alpha>0$ and $\tau >0$ such that: 
            \begin{align}\label{eq:exp-tau-recurrent}
                \min_{s \in T_{S}(x;\,\tau)} e^{\alpha s}V(\phi(s,x)) \leq V(x), \quad \forall x\in S\backslash\{x^*\}.\quad \mbox{}
            \end{align}
\end{enumerate}
\end{defn}



Note that an $\alpha$-exponentially $\tau$-recurrent function is always strictly $\tau$-recurrent, but not the other way around. 
Further, in the above definition, we favor a \emph{linear containment} condition in \eqref{eq:linearly-contained}, instead of the standard condition based on class $\mathcal{K}$ functions for two reasons. Firstly, it leads to slightly simpler derivations. Secondly, as we will show in Section \ref{ct-weak converse}, under mild conditions standard norms $\|\cdot \|$ are ERLFs, thus trivially satisfying \eqref{eq:linearly-contained}.

We now show how to use Definition \ref{defn:ERLF} to provide exponentially decreasing bounds for $\|\phi(t,x)-x^*\|$. 

\begin{theorem}[Exponential Stability]\label{thm:tau-exponential-stability}
Consider an equilibrium point $x^*\in D$ of \eqref{eq:system}, and a compact set $S\subseteq D$ satisfying $x^*\in \mathrm{int}(S)$. Suppose Assumption \ref{as:Lipschitz} holds, and let $V: D \to \mathbb{R}_{\geq 0}$ be an \textbf{Exponential Recurrent Lyapunov Function} over the set $S$.
Then, the equilibrium $x^*$ is exponentially stable with rate $\alpha$ on the set $S$. 
That is, for every $x\in S$ we have 
\begin{equation}\label{eq:V-exponentially-stable}
    \|\phi(t,x)-x^*\| \leq C\,e^{-\alpha t}\|x-x^*\|, \quad t\geq 0,
\end{equation}
with {$C:=\frac{a_2}{a_1}e^{\alpha \tau}(1+\bar Lh(\tau;L))$}, 
$L:=L_{\mathrm{cl}\,\mathcal{R}^{\tau}(S)}$, and $\bar L:=\bar L_{\mathrm{cl}\,\mathcal{R}^{\tau}(S)}$.
\end{theorem}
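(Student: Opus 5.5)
The plan is to build a sequence of recurrent return times along which $V$ decays exponentially, and then to bound the trajectory between consecutive returns with the Containment Lemma \ref{lem:containment}.

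\emph{Step 1: a decaying sequence of return times.} The ERLF condition \eqref{eq:exp-tau-recurrent} implies the RLF condition \eqref{eq:V-tau-recurrent}, since $e^{\alpha s}\ge1$. Hence Lemma \ref{lem:recurrence of V} shows that $S$ is $\tau$-recurrent, trajectories from $S$ are forward complete, and $L$ and $\bar L$ are finite because $\mathrm{cl}\,\mathcal{R}^\tau(S)$ is compact. Fix $x\in S\setminus\{x^*\}$; if $x=x^*$ there is nothing to prove. I would construct times $t_0=0<t_1<t_2<\cdots$ inductively. Given $x_n=\phi(t_n,x)\in S$ with $x_n\neq x^*$, pick $s_n\in T_S(x_n;\tau)$ attaining (or nearly attaining) the minimum in \eqref{eq:exp-tau-recurrent}, and set $t_{n+1}=t_n+s_n$. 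Then $V(x_{n+1})\le e^{-\alpha s_n}V(x_n)$, and iterating gives
\[
V(\phi(t_n,x))\le e^{-\alpha t_n}V(x).
\]
If some $x_n=x^*$, the trajectory remains at $x^*$ afterwards and the bound is trivial.

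\emph{Step 2: the main obstacle, $t_n\to\infty$.} The steps $s_n$ lie in $(0,\tau]$ but could shrink. I would handle this as in the proof of Lemma \ref{lem:infinite-seq}. Suppose instead that $t_n\uparrow t^*<\infty$. Then $V(\phi(t_n,x))\le e^{-\alpha t_n}V(x)$, and by continuity $V(\phi(t^*,x))\le e^{-\alpha t^*}V(x)$ with $\phi(t^*,x)\in S$, using compactness of $S$. Restarting the construction at $t^*$ lets us continue transfinitely, but a cleaner route is to define
\[
t_{n+1}=t_n+\max\{s\in(0,\tau]:\ \phi(s,x_n)\in S,\ e^{\alpha s}V(\phi(s,x_n))\le V(x_n)\}.
\]
This set is nonempty by hypothesis and closed by continuity and compactness of $S$, so the maximum exists. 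If $t_n\uparrow t^*$, then $t^*-t_{n}\le\tau$ for large $n$, and the limit inequality
\[
e^{\alpha(t^*-t_n)}V(\phi(t^*,x))\le e^{\alpha(t^*-t_n)}e^{-\alpha t^*}V(x)
\]
has to be compared with $V(x_n)$. The difficulty is that we only know $V(x_n)\le e^{-\alpha t_n}V(x)$, not equality, so $t^*$ need not be admissible at step $n$. To repair this, I would replace the per-step comparison with the \emph{cumulative} one: define $t_{n+1}$ as the largest $s\in(t_n,t_n+\tau]$ such that $\phi(s,x)\in S$ and $e^{\alpha s}V(\phi(s,x))\le V(x)$. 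This set is nonempty because the previous bound composes, namely $e^{\alpha(t_n+s_n)}V(\phi(s_n,x_n))\le e^{\alpha t_n}V(x_n)\le V(x)$. Now $t^*$ is admissible at step $n$ with $t^*>t_{n+1}$, which contradicts maximality. So $t_n\to\infty$ with $t_{n+1}-t_n\le\tau$.

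\emph{Step 3: interpolation between returns.} For $t\in[t_n,t_{n+1})$, write $y=x_n$ and $u=t-t_n\le\tau$. Applying the argument of Lemma \ref{lem:containment} with the singleton $\{y\}$ in place of $S$, so that $F_S$ becomes $\|f(y)\|$, gives
\[
\|\phi(t,x)-y\|\le\|f(y)\|\,h(\tau;L)\le\bar L\|y-x^*\|h(\tau;L),
\]
since $f(x^*)=0$. The Lipschitz constant $\bar L$ of $\mathrm{cl}\,\mathcal{R}^\tau(S)$ applies because $y,x^*\in S$. Hence
\[
\|\phi(t,x)-x^*\|\le(1+\bar Lh(\tau;L))\|x_n-x^*\|.
\]
Combining this with \eqref{eq:linearly-contained} and Step 1,
\[
\|x_n-x^*\|\le \tfrac{1}{a_1}V(x_n)\le\tfrac{1}{a_1}e^{-\alpha t_n}V(x)\le\tfrac{a_2}{a_1}e^{-\alpha t_n}\|x-x^*\|.
\]
Finally, $e^{-\alpha t_n}\le e^{\alpha\tau}e^{-\alpha t}$ because $t-t_n\le\tau$. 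Together these yield \eqref{eq:V-exponentially-stable} with exactly the stated $C$.
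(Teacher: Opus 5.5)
Your proof is correct and follows essentially the paper's route. You choose return times by a maximality rule, derive a limit-point contradiction to get $t_n\to\infty$, and use the Containment Lemma together with $\|f(x_n)\|\le\bar L\|x_n-x^*\|$ between returns, where you take the singleton $\{x_n\}$ and the paper takes $B_{r_n}(x^*)\cap S$, giving the same constant $C$. Your cumulative admissibility rule slightly simplifies the paper's per-step ``largest argmin'' rule, but the per-step rule you discarded also works: $e^{\alpha t_n}V(x_n)$ is nonincreasing, so its limit $e^{\alpha t^*}V(\phi(t^*,x))$ is at most $e^{\alpha t_n}V(x_n)$, which makes $t^*-t_n$ admissible at step $n$ --- precisely the paper's argument.
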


\begin{proof}
Pick any $x\in S$, it suffices to focus on $x\in S\backslash\{x^*\}$. Since ERLF$\Rightarrow$SRLF$\Rightarrow$RLF, it follows as before that
$\phi(t,x)$ is bounded and forward complete. Next, we find a recurrent sequence in this trajectory with 
exponentially decreasing values of $V$, by a similar construction to that in Lemma \ref{lem:infinite-seq}.

\noindent Claim I: there exists a  sequence $\{t_n\}_{n\in\mathbb{N}}$, with $t_0=0$, $\lim_{n\rightarrow \infty} t_n=\infty$ and $t_{n+1}-t_n\in(0,\tau]$  $\forall n$, 
such that $x_n := \phi(t_n,x)\in S\backslash\{x^*\}$ and:
\begin{equation}\label{eq:th2:induction}
e^{\alpha t_{n+1}}V(x_{n+1}) \leq  e^{\alpha t_{n}}V(x_n)  \leq V(x), \, \,\forall \ n\geq 1.
\end{equation}

The sequence is defined by induction. For the base case with $t_0=0$, $x_0=x$, define:
   \begin{align} \label{eq:delta t1 exp}
          t_{1} = \max\{{\arg\min}_{s\in T_{S}(x_0;\tau)}
          e^{\alpha s}V(\phi(s,x_0))\};
     \end{align}
by the hypothesis  \eqref{eq:exp-tau-recurrent} the minimum above exists and is no larger than $V(x)$, and $t_1 \in (0,\tau]$; if there are multiple minimizing times we have selected the largest. Note also that by construction $x_1:=\phi(t_1,x_0) \in S$, and also 
$\phi(t_1,x_0)\neq x^*$ since the latter is an equilibrium and $x_0\neq x^*$. This allows us to repeat the construction inductively:  
   \begin{align} \label{eq:delta tn exp}
          t_{n+1} - t_n = \max\{{\arg\min}_{s\in T_{S}(x_n;\tau)}
          e^{\alpha s}V(\phi(s,x_n))\}.  \quad \mbox{}
     \end{align}
 Invoking again  \eqref{eq:exp-tau-recurrent}, $t_{n+1}-t_n$ is well defined in $(0,\tau]$ with 
\begin{equation}\label{eq:th2:induct}
e^{\alpha (t_{n+1}-t_n)}   V(\phi(t_{n+1}-t_n,x_n)) \leq  V(x_n);
\end{equation}
also note $x_{n+1} = \phi(t_{n+1},x) = \phi(t_{n+1}-t_n,x_n) \in S\backslash\{x^*\}$, and inequalities in \eqref{eq:th2:induction} follow from \eqref{eq:th2:induct}.

It only remains to show that $t_n \to \infty$. 

Assume instead that $t_n\uparrow \bar{t}<\infty$. By continuity of $\phi(\cdot,x)$ 
and compactness of $S$, $x_n=\phi(t_n,x)\rightarrow\phi(\bar{t},x)=:\bar{x}\in S\backslash\{x^*\}$. By the monotonicity in \eqref{eq:th2:induction} we have 
\begin{equation}\label{eq:th2:limit}
e^{\alpha \bar{t}}V(\bar{x})\leq  e^{\alpha t_{n+1}}V(x_{n+1}) \quad \forall n\geq 0.
\end{equation}
We deduce from here that:
\begin{equation}\label{eq:th2:incr}
e^{\alpha (\bar{t}-t_n)}V(\bar{x})\leq  e^{\alpha (t_{n+1}-t_n)}V(x_{n+1}).
\end{equation}
Now pick $n$ large enough so that $\bar{s}:=\bar{t}-t_n \leq \tau$. Note $\bar{s}> 
t_{n+1}-\hl{t_n}$ because $\bar{t}>t_{n+1}$.

Since $\phi(\bar{s},x_n)=\phi(\bar{t},x)=\bar{x}\in S$, the point $\bar{s}$ is in the domain of the minimization in \eqref{eq:delta tn exp}, and gives a result that is no greater than the minimum due to \eqref{eq:th2:incr}. This contradicts the fact that $\bar{s}>t_{n+1}-t_n$, 
as the latter was the \emph{largest} minimizing time index in \eqref{eq:delta tn exp}.
This establishes Claim I.

We will now use \eqref{eq:linearly-contained} and \eqref{eq:th2:induction} to bound the distance from the sequence $x_n$ to equilibrium:
\[
\|x_n-x^*\| \leq \frac{V(x_n)}{a_1} \leq \frac{e^{-\alpha t_n}}{a_1} V(x)=:r_n. 
\]
Let $B_n:=B_{r_n}(x^*)\cap S$. Applying Lemma \ref{lem:containment} on the compact set $B_n\subset S$\red{, with the global constant $L$ in place of $L_{\mathcal{R}^{\tau}(B_n)}$ (allowed since $L_{\mathcal{R}^{\tau}(B_n)}\leq L$ and $h$ is increasing in $L$),} it follows that 
 \begin{equation}\label{eq:bnd exp rn}
\|\phi(t,x)-x^*\| \leq r_n +F_{r_n}h(\tau; L), \; \forall t \in (t_n,t_{n+1}].     
 \end{equation}
{Furthermore, since by Assumption \ref{as:Lipschitz}, $f$ is $\bar L$-Lipschitz on $B_{n}\subset S$, and $f(x^*)=0$ we have $F_{r_n} \leq \bar L r_n$, leading to }
 \begin{align}
\|\phi(t,x)-x^*\| &\leq r_n (1 + {\bar L}h(\tau; L))\\
&= \frac{e^{-\alpha t_n}}{a_1} (1 + \bar Lh(\tau; L)) V(x)     
 \end{align}
for all $t\in(t_n,t_{n+1}]$. For such $t$ we have $t\leq t_n + \tau$, therefore $-t_n \leq \tau - t$ so $e^{-\alpha t_n} \leq e^{\alpha \tau} e^{-\alpha t}$, leading to 
\[
\|\phi(t,x)-x^*\| \leq e^{\alpha \tau} \frac{e^{-\alpha t}}{a_1} (1 +\bar L h(\tau; L)) V(x).
\]
Moreover, since the last bound is independent of $n$, and $n$ was chosen arbitrarily, it must hold 
for all $t\geq0$.
Finally, applying the upper bound $V(x) \leq a_2 \|x-x^*\|$ we establish \eqref{eq:V-exponentially-stable} with $C$ as in the theorem statement \red{(note $C\geq1$, since $a_2\geq a_1$)}. 
\end{proof}

The above theorem demonstrates the exponential stability of an equilibrium point $x^*$, for initial conditions on a compact set $S$, and involving constants  $L$ and $\bar L$ which depend on the set $S$. We now turn to \emph{global} exponential stability results over the entire domain, which require somewhat stronger assumptions.

\begin{corollary}[Global Exponential Stability]\label{cor:global-exponential-stability}
Consider the system \eqref{eq:system}, and assume the vector field $f$ is \textbf{globally} Lipschitz over $D$ with Lipschitz constant $\bar L$. Let $x^*\in D$ be an equilibrium point. Let $V: D \to \mathbb{R}_{\geq 0}$ be an Exponential Recurrent Lyapunov Function over $D$, with constants $\alpha, \tau$. Assume that $V$ has \textbf{compact sublevel sets}.

Then the equilibrium point $x^*$ is globally exponentially stable. In particular, for all $x \in D$ and $t \geq 0$,
\begin{equation}\label{eq:global-exp-bound}
    \|\phi(t,x) - x^*\| \leq C\, e^{-\alpha t} \|x - x^*\|,
\end{equation}
with $C := \frac{a_2}{a_1} e^{(\bar L + \alpha)\red{\tau}}$.
\end{corollary}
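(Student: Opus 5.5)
We plan to repeat the argument of Theorem~\ref{thm:tau-exponential-stability}, replacing the compact set $S$ by a compact sublevel set that depends on the initial condition. We then check that every constant involved is uniform, i.e., independent of that sublevel set, thanks to the global Lipschitz assumption.

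Fix $x\in D\setminus\{x^*\}$ and let $c:=V(x)$. Set $S_c:=V_{\leq c}$, which is compact by hypothesis. Since $V$ is an ERLF over $D$, for every $y\in S_c\setminus\{x^*\}$ there is an $s\in(0,\tau]$ with $e^{\alpha s}V(\phi(s,y))\leq V(y)\leq c$. In particular $V(\phi(s,y))\leq c$, so $\phi(s,y)\in S_c$. Hence the minimum in \eqref{eq:exp-tau-recurrent} taken over $T_{D}(y;\tau)$ is attained inside $T_{S_c}(y;\tau)$, and $V$ is an ERLF over $S_c$ as well. Moreover $x^*\in\mathrm{int}(S_c)$ by \eqref{eq:linearly-contained}, since $V\leq a_2\|\cdot-x^*\|$ is small near $x^*$. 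The Claim~I construction in the proof of Theorem~\ref{thm:tau-exponential-stability} therefore applies verbatim on $S_c$. It yields recurrent times $t_n\to\infty$ with $t_{n+1}-t_n\in(0,\tau]$ and $\|x_n-x^*\|\leq r_n:=e^{-\alpha t_n}V(x)/a_1$.

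The main step, and the only real change, is the inter-sample bound, where we want constants independent of $c$. In place of Lemma~\ref{lem:containment}, whose constant $L_{\mathrm{cl}\mathcal{R}^\tau(S_c)}$ could grow with $c$, we use Gr\"onwall directly with the global Lipschitz constant $\bar L$. Write $u(t)=\|\phi(t,x_n)-x^*\|$. Since $f(x^*)=0$, we have $\|f(z)\|\leq \bar L\|z-x^*\|$ on $D$, so by \eqref{eq:curve norm} and the Cauchy--Schwarz property, $D^+u\leq \bar L u$. This gives $u(t)\leq e^{\bar L t}u(0)\leq e^{\bar L\tau}r_n$ for $t\in[0,\tau]$. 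This is slightly sharper in form than $1+\bar L h(\tau;\bar L)=e^{\bar L\tau}$, and in fact coincides with it. We would also check the corner case where $u$ vanishes: this cannot happen in finite time, since $x^*$ is an equilibrium and trajectories are unique under the Lipschitz assumption.

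Finally, for $t\in(t_n,t_{n+1}]$ we have $e^{-\alpha t_n}\leq e^{\alpha\tau}e^{-\alpha t}$. Combining this with $V(x)\leq a_2\|x-x^*\|$ gives $\|\phi(t,x)-x^*\|\leq \frac{a_2}{a_1}e^{(\bar L+\alpha)\tau}e^{-\alpha t}\|x-x^*\|$. This is \eqref{eq:global-exp-bound}, with a $C$ independent of $x$. The case $x=x^*$ is trivial. We expect the main obstacle to be verifying that the ERLF property over $D$ restricts to an ERLF property over $S_c$ with containment times in $S_c$, which is what lets us invoke Claim~I; this was handled above via the observation that the decrease of $V$ keeps the return point inside $S_c$.
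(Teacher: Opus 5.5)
Your proposal is correct and follows the paper's route. You restrict to the compact sublevel set $V_{\le V(x)}$, on which $V$ is still an ERLF, and then run the argument of Theorem~\ref{thm:tau-exponential-stability} with every constant controlled by the global $\bar L$; you also justify the restriction step, which the paper simply asserts.

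The only difference is minor. The paper cites Theorem~\ref{thm:tau-exponential-stability} as a black box and bounds $C_S$ using $L_S,\bar L_S\le\bar L$ and the monotonicity of $h(\tau;\cdot)$. You instead replace the Containment Lemma step by a direct Gr\"onwall estimate $D^+u\le\bar L u$. This gives the same constant, since $1+\bar L h(\tau;\bar L)=e^{\bar L\tau}$.
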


\begin{proof}
Let $x \in D$ be arbitrary, and define the compact sublevel set $S := V_{\leq V(x)} \subset D$. Since $V$ is an Exponential Recurrent Lyapunov Function over $D$, it satisfies the ERLF conditions over $S$ as well.

Applying Theorem~\ref{thm:tau-exponential-stability} over $S$ we have:
\[
\|\phi(t,x) - x^*\| \leq C_S e^{-\alpha t} \|x - x^*\| \quad \forall t\geq 0,
\]
where $C_S := \frac{a_2}{a_1} e^{\alpha \tau}(1 + \bar L_S h(\tau; L_S))$.

By assumption, since \( f \) is globally Lipschitz on \( D \) with constant \( \bar L \), both the standard and one-sided Lipschitz constants over \( \mathcal{R}^{\tau}(S) \subseteq D \) satisfy \( L_S \leq \bar L \) and \( \bar L_S \leq \bar L \).

Moreover, the function \( h(\tau; L) := \frac{e^{L\tau} - 1}{L} \) is increasing in \( L \), so we conclude that
\[
C_S \leq \frac{a_2}{a_1} e^{\alpha \tau}(1 + \bar L h(\tau; \bar L)) =\red{\frac{a_2}{a_1} e^{(\alpha+\bar L)\tau}} = C.
\]
Since this bound is independent of the particular choice of $x \in D$, we obtain:
\[
\|\phi(t,x) - x^*\| \leq C e^{-\alpha t} \|x - x^*\|, \quad \forall x \in D, \; \forall t \geq 0.
\]
\end{proof}

\section{Stability Analysis with Norms as RLFs}
\label{ct-weak converse}

The previous sections establish the sufficiency of conditions based on recurrent Lyapunov functions for stability analysis. It is natural to 
inquire about converse results, i.e. the \emph{necessity} of such conditions. 
A first remark is that since our Lyapunov conditions are weaker than the standard ones, necessity follows trivially from the classical converse Lyapunov theory (see, e.g.,~\cite{massera1949liapounoff,sontag_containment}): under asymptotic or exponential stability, a standard $V(x)$ exists (in particular, decreasing along trajectories) that will also satisfy our weaker recurrence conditions. 

Hence we pose a different question, aligned  with our objective of decoupling the Lyapunov choice from the system geometry: can we 
verify recurrence with a generic Lyapunov function? In particular, in Section \ref{ssec:converse theorems} we will use an arbitrary \emph{norm} of the deviation from equilibrium as Lyapunov candidate, and show that (slightly weaker) recurrence conditions follow from the appropriate stability notions. In Section \ref{ssec:time domain implications} we will interpret these conditions as guaranteeing practical notions of stability, which will form the basis of the data-driven verification methods to be proposed in Section~\ref{ct-verify}, and the algorithms developed in Section~\ref{ct-numerical}.


\subsection{Norm-Based Weak Converse Theorems}\label{ssec:converse theorems}

We begin by showing that any norm satisfies the SRLF condition on compact subsets of the domain of attraction of an \emph{asymptotically stable equilibrium}, provided a neighborhood of the equilibrium itself is excluded. To that end we provide the following relaxation of SRLFs.

\begin{defn}[$\varepsilon$-Strict Recurrent Lyapunov Func. ($\varepsilon$-SRLF)]
\label{defn:epsilon-strict RLF}
Given an equilibrium $x^*\in D$ of \eqref{eq:system}, a set $S\subseteq D$ satisfying $x^*\in \mathrm{int}(S)$, and $\tau>0$. We say that  
a continuous function $V:D\to\mathbb{R}_{\geq0}$ is an  
\textbf{$\varepsilon$-Strict Recurrent Lyapunov Function} over $S$ 
if:

\begin{enumerate}[$(i)$]
    \item $V$ is \textbf{positive definite} around $x^*$, that is,
            \begin{align}\label{eq:positive-definite-2}
                V(x)>0,\; \forall x\in D\backslash\{x^*\}, \text{ and } V(x^*)=0.
            \end{align} 
    \item $V$ is $\varepsilon$-\textbf{strictly $\tau$-recurrent} over $S$, i.e., there exists $\varepsilon>0$, with $B_\varepsilon(x^*)\subset \mathrm{int}(S)$, such that
    \begin{equation}\label{eq:eps-SRLF}
    \min_{s\in T_S(x;\tau)} V(\phi(s,x)) < V(x),
    \qquad \forall x\in K_\varepsilon,
    \end{equation}
    where $K_\varepsilon:=\mathrm{cl}\left(S\setminus B_\varepsilon(x^*)\right)$.
\end{enumerate}
\end{defn}


We now show that norms can indeed be considered as natural $\varepsilon$-SRLFs.

\begin{theorem}[Asymptotic Stability Implies Norm is $\varepsilon$-SRLF]\label{thm:norm-srlf}
Given system \eqref{eq:system}. Let \( x^* \in D \) be an asymptotically stable equilibrium on a compact set \( S \subseteq D \) (Definition \ref{defn:asymptotic stability}) satisfying $x^*\in \mathrm{int}(S)$. Let \( \|\cdot\| \) be any norm on \( {\mathbb{R}^d} \). Then, for any $\varepsilon>0$ such that $B_\varepsilon(x^*)\subset \mathrm{int}(S)$, there exists a finite $\tau>0$ such that the function \( V(x) := \|x - x^*\| \) is a $\varepsilon$-Strict Recurrent Lyapunov Function ($\varepsilon$-SRLF) over \( S \).
\end{theorem}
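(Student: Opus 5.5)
The plan is to extract a uniform return time from pointwise attractivity, using compactness of $K_\varepsilon$ and continuity of the flow. Condition $(i)$ of Definition~\ref{defn:epsilon-strict RLF} is immediate for $V(x)=\|x-x^*\|$, so all the work is in $(ii)$. Two preliminary observations:
\begin{itemize}
\item Since $S$ is compact, $K_\varepsilon=\mathrm{cl}(S\setminus B_\varepsilon(x^*))\subset S$ is compact.
\item Every $x\in K_\varepsilon$ is a limit of points of norm-distance $>\varepsilon$ from $x^*$, so $V(x)\geq\varepsilon$.
\end{itemize}
It therefore suffices to show that every $x\in K_\varepsilon$ reaches, within a uniform time $\tau$, a point of $S$ whose distance to $x^*$ is strictly less than $\varepsilon$.

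First I fix $\delta\in(0,\varepsilon)$; then $B_\delta(x^*)\subset B_\varepsilon(x^*)\subset\mathrm{int}(S)$. Take any $x\in K_\varepsilon\subset S$. Attractivity on $S$ (Definition~\ref{defn:asymptotic stability}) says that $\phi(t,x)$ is defined for all $t\ge0$ and converges to $x^*$. Hence there is a time $T_x>0$ with $\|\phi(T_x,x)-x^*\|<\delta$. Since $T_x\in I(x)$, under Assumption~\ref{as:Lipschitz} the map $y\mapsto\phi(T_x,y)$ is defined and continuous on an open neighborhood of $x$. So there is an open set $U_x\ni x$ such that $\|\phi(T_x,y)-x^*\|<\delta$ for all $y\in U_x$.

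The family $\{U_x\}_{x\in K_\varepsilon}$ is an open cover of the compact set $K_\varepsilon$. I extract a finite subcover $U_{x_1},\dots,U_{x_m}$ and set $\tau:=\max_i T_{x_i}<\infty$. Now take any $y\in K_\varepsilon$ and pick $i$ with $y\in U_{x_i}$. Put $s:=T_{x_i}\in(0,\tau]$. Then $\phi(s,y)\in B_\delta(x^*)\subset S$, so $s\in T_S(y;\tau)$. Moreover
\[
V(\phi(s,y))<\delta<\varepsilon\le V(y).
\]
This gives $\min_{s\in T_S(y;\tau)}V(\phi(s,y))<V(y)$, which is \eqref{eq:eps-SRLF}.

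I expect the only delicate point to be the uniformity step: attractivity is merely pointwise, so $\tau$ has to come from the compactness and continuity argument above. There are two technical checks.
\begin{itemize}
\item The neighborhoods $U_x$ must lie in the region where the solution exists up to time $T_x$. Openness of the flow domain for locally Lipschitz $f$ gives this; points of $U_x$ outside $S$ are irrelevant because we only use $y\in K_\varepsilon$.
\item The minimum in \eqref{eq:eps-SRLF} is over a possibly non-closed set, so its existence needs a word. It suffices to note that a value strictly below $V(y)$ is attained. If one insists on a true minimum, it exists by the same compactness reasoning as in Lemma~\ref{lem:infinite-seq}, since the relevant time set is the preimage of compact $S$ intersected with $(0,\tau]$ and is bounded away from $0$ near the attained values.
\end{itemize}
Stability itself is not needed beyond attractivity.
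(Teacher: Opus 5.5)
Your proof is correct. It shares the paper's skeleton: a pointwise hitting time, continuity of the flow to obtain an open neighborhood, a finite subcover, and $\tau$ taken as the maximum of finitely many hitting times. It differs from the paper in one substantive respect.

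\emph{The paper's route.} It first uses the \emph{stability} half of asymptotic stability. It chooses $\delta$ so that entering $B_\delta(x^*)$ forces the trajectory to stay in $B_\varepsilon(x^*)$ forever. From this it proves a uniform settling claim over all of $S$: $\|\phi(t,x)-x^*\|<\varepsilon$ for every $x\in S$ and every $t\ge\tau$. It then certifies \eqref{eq:eps-SRLF} by evaluating at the single common time $s=\tau$.

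\emph{Your route.} You evaluate instead at the cover-dependent time $s=T_{x_i}\le\tau$. This is legitimate because $T_S(y;\tau)$ admits any return time in $(0,\tau]$. As a result you need only attractivity, and only on $K_\varepsilon$ rather than on all of $S$.

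\emph{What each buys.} Your argument shows the conclusion holds for merely attractive equilibria, with no Lyapunov stability required. The paper's argument yields the stronger statement $V(\phi(t,x))<\varepsilon\le V(x)$ for all $x\in K_\varepsilon$ and all $t\ge\tau$. That is, the decrease occurs at the fixed time $\tau$ itself and persists afterwards, not just at some time in $(0,\tau]$.

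Your remark on the existence of the minimum over the possibly non-closed set $T_S(y;\tau)$ addresses a point the paper passes over silently. Your reasoning there is sound: values of $V(\phi(s,y))$ near $s=0$ are close to $V(y)$, while a strictly smaller value is attained. Hence the infimum is attained on the compact set $T_S(y;\tau)\cap[\eta,\tau]$ for some $\eta>0$.
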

\begin{proof}
Let $V(x)= \|x-x^*\|$ and consider a fixed $\varepsilon>0$ such that $B_\varepsilon(x^*)\subset \mathrm{int}(S)$. 
By definition $V$ satisfies \eqref{eq:positive-definite-2}. Also introduce $\delta > 0$ (exists due to equilibrium stability) such that for any $x_0$ with $V(x_0)<\delta$, $V(\phi(t,x_0)) < \varepsilon$ for all $t\geq 0$.  

Claim I: there exists a finite $\tau>0$ such that for all $x\in S$, and $t\geq \tau$, $V(\phi(t,x)) < \varepsilon$. We prove this claim:

For each $x \in S$, due to asymptotic stability we can find a time $t(x)$ such that $V(\phi(t(x),x)) = \|\phi(t(x),x) - x^*\|<\delta$. 
By continuity of the flow with respect to initial conditions we can further find $r(x)>0$ such that for any $x' \in \mathrm{int} B_{r(x)}(x)$, $V(\phi(t(x),x')) <\delta$. Given the choice of $\delta$, we have: 
\begin{equation}\label{eq.settling}
    V(\phi(s,x')) < \varepsilon \quad \forall s \geq t(x), \ \  x'\in   \mathrm{int} B_{r(x)}(x).
\end{equation}
Now, the family $\{\mathrm{int}  B_{r(x)}(x)\}_{x\in S}$ constitutes an open cover of the compact set $S$: select a finite subcover $\{ \mathrm{int}  B_{r_k}(x_k)\}_{k=1}^K$. Define $\tau = \max_{k} t(x_k)$. We conclude from \eqref{eq.settling} that $V(\phi(s,x')) < \varepsilon$ for all $x'\in S$, $s \geq \tau$, as claimed. 

To verify condition \eqref{eq:eps-SRLF} of our $\varepsilon$-SRLF definition, consider now an initial condition 
$x\in K_\varepsilon:=\mathrm{cl}\left(S\setminus B_\varepsilon(x^*)\right)$. We have
\begin{equation}
    V(x)=\|x-x^*\| \geq \varepsilon > V(\phi(\tau,x)) \geq \min_{s\in T_S(x;\tau)} V(\phi(s,x));
\end{equation}
note for the last step that $\phi(\tau,x) \in S$ since $B_\varepsilon(x^*) \subset S$. 
Thus, $V(x)=\|x-x^*\|$ is $\varepsilon$-strictly $\tau$-recurrent.
\end{proof}

Having established that asymptotic stability implies $\varepsilon$-strict $\tau$-recurrence of standard norms, we now turn to exponential stability. In contrast to the asymptotic case—where strict recurrence can only be guaranteed away from a neighborhood of the equilibrium—exponential stability allows us to retain the full domain $S$ while relaxing the certified rate of convergence. 

Nevertheless, for consistency with the verification framework to be developed in Section~\ref{ct-verify}, we also formulate a converse result that accommodates the possibility of $\varepsilon$-ERLFs.

\begin{defn}[$\varepsilon$-Exponential Recurrent Lyapunov Function ($\varepsilon$-ERLF)]
\label{defn:epsilon-erlf}
Given an equilibrium $x^*\in D$ of \eqref{eq:system}, a set $S\subseteq D$ satisfying $x^*\in \mathrm{int}(S)$, and constants $\alpha>0$ and $\tau>0$, 
a continuous function $V:D\to\mathbb{R}_{\geq 0}$ is said to be an 
\textbf{$\varepsilon$-Exponential Recurrent Lyapunov Function} over $S$ if:

\begin{enumerate}[(i)]
    \item $V$ is \textbf{linearly contained} around $x^*$ on $S$, i.e.,
    \begin{equation}
        a_1\|x-x^*\| \le V(x) \le a_2\|x-x^*\|,
        \qquad \forall x\in S,
    \end{equation}
    for some constants $a_1,a_2>0$;

    \item $V$ is $\varepsilon$-\textbf{strictly $\alpha$-exponentially $\tau$-recurrent} over $S$, i.e., there exists $\varepsilon>0$ with $B_\varepsilon(x^*)\subset \mathrm{int}(S)$ such that
    \begin{equation}\label{eq:eps-ERLF}
        \min_{s\in T_S(x;\tau)} e^{\alpha s} V(\phi(s,x))
        \le V(x),
        \qquad \forall x\in K_\varepsilon, 
    \end{equation}
    where $K_\varepsilon:= \mathrm{cl}\left(S\setminus B_\varepsilon(x^*)\right)$.
\end{enumerate}
\end{defn}


\begin{theorem}[Exponential Stability Implies Norm is $\varepsilon$-ERLF]
\label{thm:norm-erlf}
Consider system~\eqref{eq:system}, and let \( x^* \in D \) be a \(\lambda\)-exponentially stable equilibrium on a compact set $S$ satisfying $x^*\in \mathrm{int}(S)$ (Definition~\ref{defn:exp stability}). 
Then for any \(0<\alpha<\lambda\) and any $\varepsilon\ge 0$ such that $B_\varepsilon(x^*)\subset \mathrm{int}(S)$, the function \(V(x) := \|x - x^*\|\) is an $\varepsilon$-Exponential Recurrent Lyapunov Function over \(S\) for any $\tau$ satisfying
\begin{equation}\label{eq:exp-tau}
\tau\geq  \frac{1}{\lambda-\alpha}\ln\left(C\frac{R}{r}\right),
\end{equation}
where $C,\lambda$ are given in Definition \ref{defn:exp stability}, and $r,R$ are positive constants satisfying $B_{r}(x^*)\subseteq S \subseteq B_{R}(x^*)$.
\end{theorem}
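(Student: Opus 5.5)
Condition (i) is immediate: with $V(x)=\|x-x^*\|$ we may take $a_1=a_2=1$. The whole argument is therefore about condition (ii). The plan is to evaluate the minimum in \eqref{eq:eps-ERLF} at the single time $s=\tau$, and to use the exponential bound \eqref{eq:lambda-exp} to show that this one sample already satisfies the inequality. Two things must be checked: that $\tau\in T_S(x;\tau)$, and that $e^{\alpha\tau}\|\phi(\tau,x)-x^*\|\le\|x-x^*\|$ for every $x\in K_\varepsilon$ (in fact for every $x\in S\setminus\{x^*\}$ with $\|x-x^*\|\ge r$, and more).

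\emph{Step 1: the state at time $\tau$ lies in $S$.} Take $x\in S$. By \eqref{eq:lambda-exp} and $S\subseteq B_R(x^*)$,
\[
\|\phi(\tau,x)-x^*\|\le Ce^{-\lambda\tau}\|x-x^*\|\le Ce^{-\lambda\tau}R .
\]
By \eqref{eq:exp-tau} and $\alpha>0$ we have $e^{\lambda\tau}\ge e^{(\lambda-\alpha)\tau}\ge CR/r$, so $\|\phi(\tau,x)-x^*\|\le r$. Hence $\phi(\tau,x)\in B_r(x^*)\subseteq S$, and therefore $\tau\in T_S(x;\tau)$. The minimum in \eqref{eq:eps-ERLF} is then taken over a nonempty set and is bounded above by the value at $s=\tau$.

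\emph{Step 2: the decay inequality.} Directly,
\[
e^{\alpha\tau}\|\phi(\tau,x)-x^*\|\le Ce^{-(\lambda-\alpha)\tau}\|x-x^*\|\le \frac{r}{R}\,\|x-x^*\|\le\|x-x^*\|,
\]
using \eqref{eq:exp-tau} and $r\le R$. This holds for all $x\in S$, and in particular on $K_\varepsilon$, so \eqref{eq:eps-ERLF} follows.

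\emph{Obstacles.} The only potentially delicate point is Step 1, namely that the sampled time must be a containment time in $S$ and not merely small in norm. That is exactly why the logarithm in \eqref{eq:exp-tau} contains $R/r$ and not just $C$. Consistency of the constants also needs a brief check: $r\le R$ because $B_r\subseteq S\subseteq B_R$, and $C\ge1$, so the right-hand side of \eqref{eq:exp-tau} is nonnegative and such $\tau>0$ exist. Finally, the case $\varepsilon=0$ is allowed, and the argument covers it since it never uses $\varepsilon$; the degenerate point $x=x^*$ satisfies the inequality trivially.
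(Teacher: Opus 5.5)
Your proposal is correct and follows essentially the same route as the paper. Both arguments evaluate the minimum at the single time $s=\tau$ and use the exponential bound together with \eqref{eq:exp-tau} to get $\|\phi(\tau,x)-x^*\|\le \frac{r}{R}e^{-\alpha\tau}\|x-x^*\|$. That one inequality both places $\phi(\tau,x)$ in $B_r(x^*)\subseteq S$, so $\tau\in T_S(x;\tau)$, and gives $e^{\alpha\tau}V(\phi(\tau,x))\le V(x)$; you simply split these two consequences into separate steps.
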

\begin{remark}
As mentioned before, in Theorem~\ref{thm:norm-erlf}, the parameter $\varepsilon$ may be taken equal to zero: exponential stability implies that $V(x)=\|x-x^*\|$ satisfies the full ERLF condition over $S$. We formulate the result for $\varepsilon$-ERLFs to maintain consistency with the asymptotic case and, more importantly, with the verification framework to be developed in Section~\ref{ct-verify}, where practical considerations may require the use of $\varepsilon>0$.
\end{remark}
\begin{proof}
W.l.o.g. we prove the theorem  statement for $\varepsilon=0$. Let \( V(x) := \|x - x^*\| \), where \(\|\cdot\|\) is the norm satisfying the exponential stability Definition \ref{defn:exp stability}. By hypothesis, 
for all \( x \in S \) and all \( t \geq 0 \), we have $\|\phi(t,x)-x^*\|\leq C e^{-\lambda t}\|x - x^*\|.$

Applying the above condition at $t=\tau$ and noting that $C e^{-\lambda \tau} \leq \frac{r}{R} e^{-\alpha \tau}$ from \eqref{eq:exp-tau}, we have:
\begin{align}\label{eq:exp-bound-tau}
\|\phi(\tau,x)-x^*\|
&\le \frac{r}{R} e^{-\alpha \tau}\|x-x^*\|.
\end{align}
Recalling $x\in B_{R}(x^*)$ we conclude in particular that $\phi(\tau,x)\in B_{r}(x^*)\subseteq S$. Moreover, from \eqref{eq:exp-bound-tau} we have
\begin{align}
e^{\alpha \tau}\|\phi(\tau,x)-x^*\|
&\le \frac{r}{R}\|x-x^*\|
\le \|x-x^*\|.
\end{align}
Hence, for $V(x)=\|x-x^*\|$ we have:
\begin{align*}
\min_{t\in T_S(x;\tau)} e^{\alpha t} V(\phi(t,x)) &\le e^{\alpha \tau}V(\phi(\tau,x)) \le V(x),
\end{align*}
satisfying the ERLF property over $S$, and thus, the $\varepsilon$-ERLF property for all $\varepsilon\ge0$ s.t. $B_\varepsilon(x^*)\subset\mathrm{int}(S)$.
\end{proof}

\subsection{Stability Implications of Weak Converse Conditions}
\label{ssec:time domain implications}

The $\varepsilon$-strict recurrence conditions of the previous subsection are weaker versions of the respective RLF conditions. A natural question is to identify the implications of these weakened conditions in regard to stability. We will now show that they imply adequate notions of \emph{practical} stability. 

\begin{defn}[Practical Stability]\label{defn:practical-stability}
An equilibrium $x^*$ is \textbf{practically stable} with precision $\varepsilon'>0$ if there exists $\delta>0$ s.t.
\[
\|x-x^*\|\le \delta
\ \Longrightarrow\
\|\phi(t,x)-x^*\|\le \varepsilon',
\quad \forall t\ge 0.
\]
\end{defn}

\begin{defn}[Practical Asymptotic Stability on $S$]\label{defn:practical-as-stability}
The equilibrium $x^*$ is \emph{practically asymptotically stable}
on $S$ with precision $\varepsilon'>0$, 
if
\begin{enumerate}[(i)]
    \item $x^*$ is practically stable with precision $\varepsilon'$;
    \item $\displaystyle 
    \limsup_{t\to\infty}
    \|\phi(t,x)-x^*\|
    \le \varepsilon'
    \quad \forall x\in S.
    $
\end{enumerate}
\end{defn}


\begin{defn}[Practical Exponential Stability on $S$]\label{defn:practical-exp-stability}
Let $S\subset D$ and $x^*\in\mathrm{int}(S)$. The equilibrium $x^*$ is 
\emph{practically exponentially stable on $S$ with rate $\lambda>0$ and precision $\varepsilon'$} 
if there exists $C\ge 1$ such that for all $x\in S$ and all $t\ge 0$,
\begin{equation} \label{eq:def-pract-exp}
\|\phi(t,x)-x^*\| \le \max\{\varepsilon',C e^{-\lambda t}\|x-x^*\|\}.
\end{equation}
\end{defn}


The following lemma, a direct consequence of Lemmas \ref{lem:infinite-seq} and  \ref{lem:containment}, provides preliminary practical stability implications of $\varepsilon$-SRLFs and $\varepsilon$-ERLFs based on norms. 

\begin{lemma}[Practical Stability from {norm} $\varepsilon$-RLFs] 
\label{lem:practical-stability-eps}
Let $x^*$ be an equilibrium of \eqref{eq:system}, satisfying Assumption~\ref{as:Lipschitz}; 
let $S\subset D$ be compact, with $B_\varepsilon(x^*)\subset \mathrm{int}(S)$ for {$\varepsilon > 0$}. If {$V(x)=\|x-x^*\|$} is either an $\varepsilon$-SRLF or an $\varepsilon$-ERLF over $S$ with parameter $\tau$,
then $B_\varepsilon(x^*)$ is $\tau$-recurrent, and  $x^*$ is practically stable with precision 
\begin{equation}\label{eq:varepsilon'}
\varepsilon' := \varepsilon + F_\varepsilon\, h(\tau;L),
\end{equation}
where $F_\varepsilon := \sup_{x\in B_\varepsilon(x^*)}\|f(x)\|$ and 
$L:=L_{\mathrm{cl}\,\mathcal{R}^\tau(B_\varepsilon(x^*))}$  
as in Lemma~\ref{lem:containment}.
\end{lemma}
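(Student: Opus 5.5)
The plan is to prove the two claims in order: first that $B_\varepsilon(x^*)$ is $\tau$-recurrent, then practical stability, which should follow immediately from Corollary~\ref{cor:bounded-trajectories} with $\delta:=\varepsilon$. Since $B_\varepsilon(x^*)$ is compact and contained in $D$, Lemma~\ref{lem:infinite-seq} lets me verify recurrence through characterization $(ii)$. That is, for each $x\in B_\varepsilon(x^*)$ it suffices to exhibit some $t\in(0,\tau]\cap I(x)$ with $\phi(t,x)\in B_\varepsilon(x^*)$.

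\textbf{Step 1 (reduction to the boundary sphere).} Fix $x\in B_\varepsilon(x^*)$ and define the exit time
\[
t_1:=\sup\{t\in[0,\tau]\cap I(x) : \phi(s,x)\in B_\varepsilon(x^*)\ \forall s\in[0,t]\}.
\]
Suppose $t_1>0$. Because $B_\varepsilon(x^*)$ is closed, $\phi$ is continuous, and trajectories confined to a compact set cannot blow up, we get $t_1\in I(x)$ and $\phi(t_1,x)\in B_\varepsilon(x^*)$. So $t=t_1\in(0,\tau]$ is already a return time and we are done.

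\textbf{Step 2 (the case $t_1=0$).} This is the only delicate case. Here $x$ leaves the ball immediately, so $\|x-x^*\|=\varepsilon$. I will argue that $x\in K_\varepsilon$. Since $B_\varepsilon(x^*)\subset\mathrm{int}(S)$, a small neighborhood of $x$ lies in $S$. That neighborhood contains points $x+\eta(x-x^*)$, $\eta>0$ small, which belong to $S\setminus B_\varepsilon(x^*)$ and converge to $x$. Hence $x\in\mathrm{cl}(S\setminus B_\varepsilon(x^*))=K_\varepsilon$. We can now invoke the $\varepsilon$-recurrence hypothesis at $x$.
\begin{itemize}
\item In the $\varepsilon$-SRLF case, \eqref{eq:eps-SRLF} gives some $s\in T_S(x;\tau)$ with $\|\phi(s,x)-x^*\|<\|x-x^*\|=\varepsilon$.
\item In the $\varepsilon$-ERLF case, \eqref{eq:eps-ERLF} gives some $s$ with $\|\phi(s,x)-x^*\|\le e^{-\alpha s}\varepsilon<\varepsilon$.
\end{itemize}
Both minima are attained over a nonempty set, so such an $s$ exists and lies in $I(x)$. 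Either way $\phi(s,x)\in B_\varepsilon(x^*)$ with $s\in(0,\tau]$. This establishes characterization $(ii)$ of Lemma~\ref{lem:infinite-seq}, hence $\tau$-recurrence of $B_\varepsilon(x^*)$ and forward completeness of trajectories starting in it.

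\textbf{Step 3 (practical stability).} With $B_\varepsilon(x^*)$ a compact $\tau$-recurrent set, Corollary~\ref{cor:bounded-trajectories} (whose constant $L$ is exactly $L_{\mathrm{cl}\,\mathcal{R}^\tau(B_\varepsilon(x^*))}$, finite by Lemma~\ref{lem:containment}) yields
\[
\mathrm{d}(\phi(t,x),B_\varepsilon(x^*))\le F_\varepsilon h(\tau;L)\quad\text{for all } t\ge0 \text{ and } x\in B_\varepsilon(x^*).
\]
The triangle inequality then gives $\|\phi(t,x)-x^*\|\le \varepsilon+F_\varepsilon h(\tau;L)=\varepsilon'$. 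This is Definition~\ref{defn:practical-stability} with $\delta=\varepsilon$.

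I expect the main obstacle to be Step 2. The hypothesis is only assumed on $K_\varepsilon$, which excludes the interior of the ball, so one must avoid applying the decrease condition at the re-entry point; doing so would only give a return time up to $t_1+\tau$ rather than within $\tau$. The exit-time split above is designed so that the hypothesis is used only when $t_1=0$, that is, at $x$ itself.
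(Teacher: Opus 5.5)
Your proof is correct and follows the paper's route. You verify characterization $(ii)$ of Lemma~\ref{lem:infinite-seq} by applying the $\varepsilon$-SRLF/$\varepsilon$-ERLF condition at points of $\partial B_\varepsilon(x^*)\subset K_\varepsilon$, then invoke Corollary~\ref{cor:bounded-trajectories} and the triangle inequality with $\delta=\varepsilon$. Your exit-time split and your check that $\partial B_\varepsilon(x^*)\subset K_\varepsilon$ just make explicit what the paper dismisses as ``clearly it suffices to check the boundary''; a plain interior-versus-boundary split, using continuity for interior points, would do the same job.
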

\begin{proof}
We first show that $B_\varepsilon(x^*)$ is $\tau$-recurrent, by applying Lemma \ref{lem:infinite-seq}, condition (ii); namely, given $x\in B_\varepsilon(x^*)$, there exists $t\in (0,\tau]$ such that
$\phi(t,x) \in B_\varepsilon(x^*)$. 
Clearly, it suffices to check this condition for  $x\in\partial B_\varepsilon(x^*)$, i.e. with $V(x)=\|x-x^*\|=\varepsilon$. Note that such points belong to the set $K_\varepsilon$ for both cases of $\varepsilon$-SRLF \eqref{eq:eps-SRLF} and $\varepsilon$-ERLF \eqref{eq:eps-ERLF}. Using these conditions we conclude in both cases that there exist $t\in (0,\tau]$ such that $V(\phi(t,x))<\varepsilon$, as required by Lemma \ref{lem:infinite-seq}; thus
$B_\varepsilon(x^*)$ is $\tau$-recurrent. 

We can now apply Corollary \ref{cor:bounded-trajectories} over $B_\varepsilon(x^*)$; for any $x$ satisfying $\|x-x^*\|\leq \varepsilon$, 
 $d(\phi(t,x), B_\varepsilon(x^*)) \leq F_\varepsilon h(\tau;L)$ for all $t\ge 0$, 
with the given definitions of $F_\varepsilon$, $h$ and $L$. Therefore: 
\[
\|\phi(t,x)-x^*\|\leq \varepsilon +  F_\varepsilon h(\tau;L)=\varepsilon' \ \ \forall t\ge 0;
\]
this establishes the claimed practical stability precision. 
\end{proof}


We are now ready to characterize the time domain implications of the $\varepsilon$-SRLF and $\varepsilon$-ERLF conditions. 


\begin{theorem}[Practical Asymptotic Stability for $\varepsilon$-SRLF]
\label{thm:practical AS-eps}
Let $x^*$ be an equilibrium of \eqref{eq:system}, satisfying Assumption~\ref{as:Lipschitz}; let $S\subset D$ be compact, with $B_\varepsilon(x^*)\subset \mathrm{int}(S)$ for {$\varepsilon > 0$}. If {$V(x)=\|x-x^*\|$} is an $\varepsilon$-SRLF with parameter $\tau$, then $x^*$ is practically asymptotically stable on $S$ with precision $\varepsilon'$ given by \eqref{eq:varepsilon'}.


\end{theorem}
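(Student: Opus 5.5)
Part (i), practical stability with precision $\varepsilon'$, is exactly Lemma~\ref{lem:practical-stability-eps}, since an $\varepsilon$-SRLF is one of its two cases. The same lemma also shows that $B_\varepsilon(x^*)$ is $\tau$-recurrent, and that every trajectory starting in $B_\varepsilon(x^*)$ stays in $B_{\varepsilon'}(x^*)$ for all $t\ge0$. It therefore suffices to prove that every trajectory starting in $S$ enters $B_\varepsilon(x^*)$ in finite time. Then $\limsup_{t\to\infty}\|\phi(t,x)-x^*\|\le\varepsilon'$ follows by applying the lemma from that entry time onward, through the semigroup property $\phi(t,x)=\phi(t-t_0,\phi(t_0,x))$.

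To obtain finite-time entry, I would first establish that $S$ itself is $\tau$-recurrent. For $x\in K_\varepsilon$, condition \eqref{eq:eps-SRLF} forces $T_S(x;\tau)\neq\emptyset$. For $x\in S\cap B_\varepsilon(x^*)$, the $\tau$-recurrence of $B_\varepsilon(x^*)\subset S$ gives a return time in $(0,\tau]$. Lemma~\ref{lem:infinite-seq}(ii) then yields $\tau$-recurrence of $S$ and forward completeness, and Corollary~\ref{cor:bounded-trajectories} gives boundedness.

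Next I would argue by contradiction. Suppose $\phi(t,x_0)\notin B_\varepsilon(x^*)$ for all $t\ge0$, with $x_0\in S$. Following the inductive construction of Lemma~\ref{lem:infinite-seq}, build return times $t_n\to\infty$ with $t_{n+1}-t_n\in(0,\tau]$ and $x_n=\phi(t_n,x_0)\in S\setminus B_\varepsilon(x^*)\subset K_\varepsilon$. At each step, choose $t_{n+1}-t_n$ as the largest minimizer of $V(\phi(s,x_n))$ over $T_S(x_n;\tau)$; the minimum is attained by compactness of $S$ and continuity, as in the proof of Theorem~\ref{thm:tau-exponential-stability}. This makes $V(x_n)$ strictly decreasing and bounded below by $\varepsilon$. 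The same largest-minimizer argument as in Claim I of Theorem~\ref{thm:tau-exponential-stability} shows $t_n\to\infty$: if $t_n\uparrow\bar t$, the limit point $\phi(\bar t,x_0)\in S$ would be an admissible later time with no larger value.

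To reach a contradiction, I would use a uniform-decrease argument on the compact set $K_\varepsilon$. I expect this step to be the main obstacle. The function $g(x):=V(x)-\min_{s\in T_S(x;\tau)}V(\phi(s,x))$ is positive on $K_\varepsilon$ but need not be lower semicontinuous, because $T_S(x;\tau)$ varies with $x$. I would therefore avoid $g$ and argue through limit points instead. Let $v_\infty:=\lim V(x_n)\ge\varepsilon$, and take a limit point $\bar x\in K_\varepsilon$ of $(x_n)$ together with a subsequence $x_{n_k}\to\bar x$, so that $V(\bar x)=v_\infty$. By \eqref{eq:eps-SRLF} there is $s\in(0,\tau]$ with $\phi(s,\bar x)\in S$ and $V(\phi(s,\bar x))<v_\infty$.

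By continuity of the flow, $V(\phi(s,x_{n_k}))<v_\infty$ for large $k$. The difficulty is that $\phi(s,x_{n_k})$ need not lie in $S$. If it does, the minimization defining $t_{n_k+1}$ gives $V(x_{n_k+1})\le V(\phi(s,x_{n_k}))<v_\infty$, which is a contradiction. I would handle the failure of membership in $S$ in two ways. One option is to note that $\phi(s,\bar x)$ can be chosen so that $V(\phi(s,\bar x))<\varepsilon$: whenever $V(\phi(s,\bar x))<\varepsilon$, it lies in $\mathrm{int}\,B_\varepsilon(x^*)$, which is open and contained in $S$, so nearby points are also in $S$. The other option, failing that, is to replace the pointwise minimization by tracking the running minimum of $V$ over all visits to $S$. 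In that version, the $\omega$-limit set $\Omega$ of the trajectory is compact and invariant, and I would repeat the argument of Theorem~\ref{thm:asymptotic-recurrence-stability} on $\Omega\cap K_\varepsilon$: minimize $V$ over $\Omega\cap S$ and use invariance of $\Omega$ to contradict minimality. This shows that $\Omega$ meets $B_\varepsilon(x^*)$. The trajectory then comes arbitrarily close to a point of $B_\varepsilon(x^*)$, and using the strict decrease available at boundary points, from which the flow enters the open ball, it actually enters $B_\varepsilon(x^*)$. That completes the finite-time entry step.
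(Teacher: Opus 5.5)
Your argument does go through, and the step that carries it (your second fallback) is exactly the paper's proof. Under the hypothesis that the trajectory never enters $B_\varepsilon(x^*)$, its $\omega$-limit set $\Omega$ is compact and invariant. Moreover, $\Omega\cap S$ is nonempty, since $S$ is visited at times $t_n\to\infty$, and it is contained in $K_\varepsilon$, since points of $\partial B_\varepsilon(x^*)$ lie in $K_\varepsilon$ because $B_\varepsilon(x^*)\subset\mathrm{int}(S)$. A minimizer $\bar x$ of $V$ on $\Omega\cap S$ is then contradicted by \eqref{eq:eps-SRLF} together with invariance of $\Omega$.

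Your first patch, however, is not valid. Condition \eqref{eq:eps-SRLF} at $\bar x$ only gives $V(\phi(s,\bar x))<v_\infty$. Since $v_\infty\ge\varepsilon$ may be strictly larger than $\varepsilon$, nothing lets you choose $s$ with $V(\phi(s,\bar x))<\varepsilon$. So the sequence route really does stall when $\phi(s,\bar x)\in\partial S$, as you suspected.

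The $\omega$-limit argument is self-contained, so you can drop three pieces: the largest-minimizer sequence, the running minimum, and the final \emph{comes arbitrarily close, then enters through a boundary point} step. Every point of $\Omega\cap S$ lies in $K_\varepsilon$, where \eqref{eq:eps-SRLF} applies, so the minimization already contradicts the hypothesis outright.

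The rest matches the paper: $\tau$-recurrence of $S$ via Lemma~\ref{lem:infinite-seq}(ii), boundedness via Corollary~\ref{cor:bounded-trajectories}, and part (i) plus the tail bound via Lemma~\ref{lem:practical-stability-eps}.
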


\begin{proof} 
Observe first that the hypothesis implies that the compact set $S$ is $\tau$-recurrent, using Lemma \ref{lem:infinite-seq}. Indeed, for an initial condition $x\in S$: if $x\in K_\varepsilon$, \eqref{eq:eps-SRLF} implies there exists $s\in (0,\tau]$ such that $\phi(s,x)\in S$, as required in condition (ii) of Lemma \ref{lem:infinite-seq}; if, instead, $x\in \mathrm{int}B_\varepsilon(x^*)$, this condition is immediate. 

Hence, by Corollary~\ref{cor:bounded-trajectories}, every trajectory $\phi(t,x)$ with $x\in S$ is forward complete and bounded, and thus has a non-empty, compact, and invariant $\omega$-limit set.

Moreover, Lemma~\ref{lem:practical-stability-eps} implies that $x^*$ is practically stable with precision $\varepsilon'$ as in \eqref{eq:varepsilon'}. In particular, if $x\in B_\varepsilon(x^*)$, then $\|\phi(t,x)-x^*\|\le \varepsilon'$ for all $t\ge0$, condition (i) in Definition \ref{defn:practical-as-stability}. To establish condition (ii), it suffices to show that any trajectory initiated in $x\in K_\varepsilon:=\mathrm{cl}(S\setminus B_\varepsilon(x^*))$ reaches the ball $B_\varepsilon(x^*)$ at some finite time. 


Suppose by contradiction that, for  $x\in K_\varepsilon$, $\phi(t,x)\notin \mathrm{int}(B_\varepsilon(x^*))$ for all $t\ge0$. 
Let $\Omega$ be its $\omega$-limit set. Then $\Omega\cap S$ is \red{nonempty (since $S$ is compact and visited at times $t_n\to\infty$),} compact\red{,} and contained in $K_\varepsilon$. Let $\bar x\in\Omega\cap S$ minimize $V$ on $\Omega \cap S$. The $\varepsilon$-SRLF property gives $s\in(0,\tau]$ with $V(\phi(s,\bar x))<V(\bar x)$ and $\phi(s,\bar x)\in S$. By invariance of $\Omega$, $\phi(s,\bar x)\in\Omega$ as well, contradicting minimality. 

We have thus shown that every trajectory from $K_\varepsilon$ enters $B_\varepsilon(x^*)$ in finite time, and practical stability yields $
\limsup_{t\to\infty}\|\phi(t,x)-x^*\|\le \varepsilon'.$
\end{proof}

\begin{theorem}[Practical Exponential Stability for $\varepsilon$-ERLF]
\label{thm:practical ES-eps}
Let $x^*$ be an equilibrium of \eqref{eq:system}, satisfying Assumption~\ref{as:Lipschitz}; let $S\subset D$ be compact, with $B_\varepsilon(x^*)\subset \mathrm{int}(S)$ for {$\varepsilon > 0$}. If {$V(x)=\|x-x^*\|$} is an $\varepsilon$-ERLF with parameters $(\alpha,\tau)$, then $x^*$ is practically exponentially stable on $S$ with rate $\alpha$ and precision $\varepsilon'$ given by \eqref{eq:varepsilon'}.
\end{theorem}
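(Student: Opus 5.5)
The plan is to combine the exponential-decrease sequence construction from the proof of Theorem~\ref{thm:tau-exponential-stability} with the practical-stability bound of Lemma~\ref{lem:practical-stability-eps}, switching from one to the other at the first time the trajectory enters $B_\varepsilon(x^*)$.

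Fix $x\in S$. If $x\in B_\varepsilon(x^*)$, Lemma~\ref{lem:practical-stability-eps} gives $\|\phi(t,x)-x^*\|\le\varepsilon'$ for all $t\ge0$, so \eqref{eq:def-pract-exp} holds for any $C\ge1$. Otherwise $x\in K_\varepsilon$. In that case I would build, exactly as in Claim~I of Theorem~\ref{thm:tau-exponential-stability}, a sequence $t_0=0<t_1<\cdots$ with $t_{n+1}-t_n\in(0,\tau]$, $x_n=\phi(t_n,x)\in S$, and $e^{\alpha t_{n+1}}V(x_{n+1})\le e^{\alpha t_n}V(x_n)$. Each step is taken as the largest minimizer in \eqref{eq:eps-ERLF}. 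The construction is valid only while $x_n\in K_\varepsilon$, so I stop it at the first index $N$ with $x_N\in \mathrm{int}\,B_\varepsilon(x^*)$; points of $\partial B_\varepsilon$ lie in $K_\varepsilon$, so the rule still applies there.

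The first thing to settle is whether $N$ is finite or the sequence is infinite with $t_n\to\infty$.
\begin{itemize}
\item If $t_n$ accumulated at a finite $\bar t$, the limit point $\phi(\bar t,x)$ would lie in $S$ and in $K_\varepsilon$, since $K_\varepsilon$ is closed and contains all $x_n$ with $n<N$. The contradiction with the largest-minimizer choice from Theorem~\ref{thm:tau-exponential-stability} then carries over verbatim.
\item If $N=\infty$, then $V(x_n)\le e^{-\alpha t_n}V(x)\to0$, which contradicts $V(x_n)\ge\varepsilon$. Hence $N<\infty$.
\end{itemize}

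Next come the bounds on each piece of the trajectory. For $t\in(t_n,t_{n+1}]$ with $n<N$, I apply Lemma~\ref{lem:containment} on $B_{r_n}(x^*)\cap S$ with $r_n=e^{-\alpha t_n}\|x-x^*\|$ (here $a_1=a_2=1$ because $V$ is the norm), and use $F_{r_n}\le\bar L r_n$ with $\bar L:=\bar L_{\mathrm{cl}\,\mathcal{R}^\tau(S)}$. This gives
\[
\|\phi(t,x)-x^*\|\le e^{\alpha\tau}\bigl(1+\bar L h(\tau;L_S)\bigr)e^{-\alpha t}\|x-x^*\|,
\]
which yields a constant $C$ of the same form as in Theorem~\ref{thm:tau-exponential-stability}. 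For $t\ge t_N$ I have $x_N\in B_\varepsilon(x^*)$, so Lemma~\ref{lem:practical-stability-eps} applied from $x_N$ gives $\|\phi(t,x)-x^*\|\le\varepsilon'$. Taking the maximum of the two regimes yields \eqref{eq:def-pract-exp} for all $t\ge0$, with the same $C$ for every $x\in S$.

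The main obstacle is bookkeeping rather than a real gap. First, I must confirm that $S$ is $\tau$-recurrent, as in Theorem~\ref{thm:practical AS-eps}, so that the trajectory is forward complete and $\mathrm{cl}\,\mathcal{R}^\tau(S)$ is compact; only then are $L$ and $\bar L$ finite. Second, I must handle trajectories that enter $B_\varepsilon$ only at boundary points, which is covered by defining $N$ via the interior. Third, the constant $L$ in $\varepsilon'$ refers to $\mathcal{R}^\tau(B_\varepsilon(x^*))$, which is consistent with how Lemma~\ref{lem:practical-stability-eps} is invoked after $t_N$.
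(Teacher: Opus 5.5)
Your proposal is correct and follows the paper's proof essentially step for step. It uses the same largest-minimizer sequence stopped at the first entry into $\mathrm{int}\,B_\varepsilon(x^*)$, the same finite-accumulation contradiction from Theorem~\ref{thm:tau-exponential-stability} to get $N<\infty$, the containment bound with $F_{r_n}\le \bar L r_n$ on each $(t_n,t_{n+1}]$, and Lemma~\ref{lem:practical-stability-eps} after $t_N$. The differences are cosmetic: you split finiteness into bounded and unbounded cases instead of first deducing that $t_n$ is bounded, you take $r_n=e^{-\alpha t_n}\|x-x^*\|$ rather than $\|x_n-x^*\|$, and you make explicit the $\tau$-recurrence of $S$ that the paper leaves implicit.
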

\begin{proof} 
For initial conditions $x\in B_\varepsilon(x^*)$ the bound \eqref{eq:def-pract-exp}
holds trivially from the practical stability of Lemma \ref{lem:practical-stability-eps}. 

We thus focus on initial conditions $x\in K_\varepsilon$, and carry out an iterative construction as in Theorem~\ref{thm:tau-exponential-stability}, with some adjustments. 
Let $t_0=0$, $x_0=x$, and define as in \eqref{eq:delta t1 exp}:
 \begin{align} \label{eq:delta t1 pract exp}
          t_{1} = \max\{{\arg\min}_{s\in T_{S}(x;\tau)}
          e^{\alpha s}V(\phi(s,x))\} \in (0,\tau]
     \end{align}
and $x_1:=\phi(t_1,x)\in S$. By hypothesis \eqref{eq:eps-ERLF} we have $e^{\alpha t_{1}}V(x_1)  \leq V(x)$. 
If $\|x_1 - x^*\| < \varepsilon$, then define $N:=1$ and stop the construction.
If, instead, $x_1 \in K_\varepsilon$, we take another step defining $(t_2, x_2)$, and so on. Specifically, while the sequence $x_n$ remains in $K_\varepsilon$, we define
as in \eqref{eq:delta tn exp}:
  \begin{align} \label{eq:pract delta tn exp}
          t_{n+1} - t_n = \max\{{\arg\min}_{s\in T_{S}(x_n;\tau)}
          e^{\alpha s}V(\phi(s,x_n))\}, 
     \end{align}
and $x_{n+1} = \phi(t_{n+1} -t_n,x_n) = \phi(t_{n+1},x)$. By hypothesis we have the recursive bound 
\begin{equation}
    e^{\alpha t_{n+1}}V(x_{n+1})  \leq e^{\alpha t_{n}}V(x_n) \leq V(x),
\end{equation}
exactly as in \eqref{eq:th2:induction}. While $x_n$ remains in $K_\varepsilon$, we deduce that 
\begin{equation}\label{eq:exp bound pract n}
\|x_n - x^*\|\leq e^{-\alpha t_n } \|x- x^*\|.    
\end{equation}
Claim I: this recursion must stop after a finite number of steps, reaching $N:=n+1$ 
such that  $\|x_N - x^*\| < \varepsilon$. 

Suppose, instead, that $x_n \in K_\varepsilon$ for all $n\geq 1$. From \eqref{eq:exp bound pract n} we would have 
\[
0 <  \varepsilon  \leq \|x_n-x^*\| \leq  e^{-\alpha t_{n}}  \|x- x^*\|   \  \ \forall n, 
\]
which implies $\{t_n\}$ is a \emph{bounded}, increasing sequence. Define $\bar{t} = \lim t_n$ and 
$\bar{x} = \phi(\bar{t},x)$. Then $\bar{x} = \lim {x_n} \in K_\varepsilon$ due to compactness,  
and we are in an identical situation as in the proof of Theorem~\ref{thm:tau-exponential-stability} (around equation \eqref{eq:th2:incr}), where we showed a contradiction stemming from the finite limit $\bar{t}$. The same argument applies, reaching the same contradiction,  establishing our Claim. As a consequence, we conclude from  practical stability that 
\begin{equation}\label{eq:exp bound from tN}    
\|\phi(t,x)-x^*\|\le \varepsilon'  \mbox{ for all }t\ge t_N.
\end{equation}
Claim II: for any $t\in (0,t_N]$, we have the bound
\begin{equation}\label{eq:exp bound pract}
\|\phi(t,x) - x^*\| \leq C e^{-\alpha t}  \|x - x^*\|,
\end{equation}
where $C:=e^{\alpha \tau} (1 +\bar L h(\tau; L))$, with constants defined as in Theorem~\ref{thm:tau-exponential-stability}\ \footnote{With the simplification that $a_1=a_2=1$ in this case.}, and similar proof which is now sketched. 

Select $n\in \{0,1,\ldots N-1\}$ such that 
$t\in (t_n,t_{n+1}]$, and denote $r_n:=\|x_n-x^*\|$. Apply Lemma \ref{lem:containment} over $S\cap B_{r_n}(x^*)$ to obtain the bound \eqref{eq:bnd exp rn}: 
$\|\phi(t,x) - \red{x^*}\|\leq r_n + F_{r_n}h(\tau;L)$. 
From the Lipschitz condition we have $F_{r_n}\leq \bar{L}r_n$, which gives
\[
\|\phi(t,x) - x^*\| \le r_n(1 +\bar L h(\tau; L))=r_n C e^{-\alpha \tau}.
\]
Now $t - t_n \leq \tau$, so  $e^{-\alpha \tau}\leq e^{-\alpha(t- t_{n})}$
which implies
\[
\|\phi(t,x) - x^*\| \le C e^{-\alpha t} e^{\alpha t_n} r_n 
\leq C e^{-\alpha t} \|x-x^*\|,
\]
where the last inequality follows from
\eqref{eq:exp bound pract n}. This establishes Claim II. 

We have two upper bounds, \eqref{eq:exp bound pract} for $t\leq t_N$ and \eqref{eq:exp bound from tN} for $t\ge t_N$; the max of both bounds holds for all $t \geq 0$.
\end{proof}

\section{Verification of $\varepsilon$-Exponential RLFs}\label{ct-verify}

\dem{So far, we have introduced Recurrent Lyapunov Functions and established guarantees for stability, asymptotic stability, and exponential stability under appropriate recurrence conditions (RLFs, SRLFs, and ERLFs, respectively). We further showed that, under mild conditions, norms satisfy $\varepsilon$-versions of the SRLF and ERLF conditions, yielding practical asymptotic and practical exponential stability guarantees.}

We now leverage the universality of norms as RLFs to develop a practical mechanism for verifying $\varepsilon$-Exponential Recurrent Lyapunov Functions directly from trajectory data. {The essence of the procedure is to give sufficient conditions to guarantee that the  $\varepsilon$-ERLF condition \eqref{eq:eps-ERLF} holds over a certain ball of initial conditions, and then cover the domain of interest with an adequate number of such balls.}

Our approach is related to ideas from topological entropy, which characterize dynamical systems through coverings of trajectory segments and the growth rate of distinguishable trajectories~\cite{Adler1965,Bowen1971}, as well as their extensions to control through notions such as topological feedback entropy and invariance entropy~\cite{Nair2004,Colonius2009}.

\subsection{Trajectory-based Verification of ERLF Condition}\label{ssec:verification-neighborhood}

We start by deriving conditions to verify the ERLF recurrence condition locally around a neighborhood of a certain initial condition $x$. We will make use of the \emph{signed distance} function  from \( x \in {\mathbb{R}^d} \) to \( S\subset{\mathbb{R}^d} \), given by:
\[
\mathrm{sd}(x, S) := 
\begin{cases}
\mathrm{d}(x, \partial S), & \text{if } x \notin S, \\
- \mathrm{d}(x, \partial S), & \text{if } x \in S.
\end{cases}
\]
It is not difficult to show that sd is non-expansive, i.e. $|\mathrm{sd}(x,S)-\mathrm{sd}(y,S)|\leq \|x-y\|$.

\begin{theorem}[Trajectory-based Verification of ERLF Property]\label{thm:verification-norm-ERLF}
Consider the system~\eqref{eq:system}, an equilibrium point~\(x^* \in D\), a compact set \(S \subseteq D\), and a given initial condition $x\in S$. Given constants \( r>0 \), \(\alpha > 0\), and \(\tau > 0\), let \( L = L_{\mathcal{R}^{\tau}(S\cup B_r(x))} \), and assume there exists \( t \in (0,\tau] \) satisfying simultaneously:
\begin{subequations}\label{eq:verification-condition}
\begin{align}
e^{\alpha t}\bigl(\|\phi(t,x)-x^*\|+r e^{Lt}\bigr)&\leq \|x - x^*\|-r,\label{eq:verify-exp-decreasing}\\[2pt]
\mathrm{sd}(\phi(t,x),S)+r e^{Lt}&\leq 0.\label{eq:verify-feasibility}
\end{align}
\end{subequations}
Then, the function \(V(y)=\|y - x^*\|\) satisfies condition \eqref{eq:eps-ERLF} on \(B_r(x)\), i.e.,
\[
\min_{s\in T_S(y;\tau)} e^{\alpha s}V(\phi(s,y)) \le V(y),
\qquad \forall y\in B_r(x).
\]
\end{theorem}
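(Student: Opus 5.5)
The plan is to use the single time $t\in(0,\tau]$ supplied by the hypothesis as a common witness for every $y\in B_r(x)$. To do this, I will bound how far the trajectory from $y$ can drift from the trajectory from $x$ over $[0,t]$. The key tool is an incremental (one-sided Lipschitz) Grönwall estimate:
\[
\|\phi(s,y)-\phi(s,x)\|\le e^{Ls}\|y-x\|\le r e^{Ls},\qquad s\in[0,t].
\]
I would derive it exactly as in the proof of the Containment Lemma~\ref{lem:containment}. Set $w(s):=\|\phi(s,y)-\phi(s,x)\|$ and apply the curve norm derivative formula~\eqref{eq:curve norm} to $\phi(s,y)-\phi(s,x)$. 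This gives $w\,D^+w=[f(\phi(s,y))-f(\phi(s,x));\phi(s,y)-\phi(s,x)]\le L w^2$ by Definition~\ref{def:one-side-L}. Hence $D^+w\le Lw$ wherever $w>0$, and the generalized Grönwall lemma of \cite{davydov2022non} yields $w(s)\le e^{Ls}w(0)$. If $w$ vanishes at some point, uniqueness of solutions forces $w\equiv0$ afterwards, so the bound persists.

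The step needing the most care is justifying that both trajectories remain, on $[0,t]$, in the set where $L=L_{\mathcal{R}^\tau(S\cup B_r(x))}$ applies. For $\phi(s,x)$ this is immediate since $x\in S$ and $s\le\tau$. For $\phi(s,y)$ it holds because $y\in B_r(x)$, so $\phi(s,y)\in\mathcal{R}^\tau(B_r(x))$. One must also know these trajectories exist up to time $t$. I would handle this with a continuation argument: as long as $\phi(\cdot,y)$ exists, the Grönwall bound keeps it within $re^{L\tau}$ of the compact arc $\phi([0,t],x)$, so it cannot blow up before $t$. The one-sided Lipschitz bound is only claimed on the stated set, and if convexity or closure subtleties arise I would simply pass to the closure as in Lemma~\ref{lem:containment}. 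I expect this domain/existence bookkeeping to be the main obstacle rather than the inequalities themselves.

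With the incremental bound at $s=t$, I would verify two things. First, $t\in T_S(y;\tau)$: by non-expansiveness of $\mathrm{sd}$ and \eqref{eq:verify-feasibility},
\[
\mathrm{sd}(\phi(t,y),S)\le \mathrm{sd}(\phi(t,x),S)+re^{Lt}\le 0,
\]
so $\phi(t,y)\in S$. Strictly, $\mathrm{sd}\le0$ means membership in $S$ (up to the boundary, which $S$ contains since it is compact). Second, by the triangle inequality and \eqref{eq:verify-exp-decreasing},
\[
e^{\alpha t}\|\phi(t,y)-x^*\|\le e^{\alpha t}\bigl(\|\phi(t,x)-x^*\|+re^{Lt}\bigr)\le \|x-x^*\|-r\le \|y-x^*\|,
\]
where the last step uses $\|x-x^*\|\le\|y-x^*\|+\|x-y\|\le \|y-x^*\|+r$. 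Therefore
\[
\min_{s\in T_S(y;\tau)}e^{\alpha s}V(\phi(s,y))\le e^{\alpha t}V(\phi(t,y))\le V(y)
\]
for every $y\in B_r(x)$, which is the claim. Since $T_S(y;\tau)$ need not be closed, I would phrase this as an infimum attained at or below the witness $t$, consistent with how the minimum is used in Definition~\ref{defn:epsilon-erlf}.
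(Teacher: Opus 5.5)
Your proposal is correct and follows the paper's proof essentially step for step. The common witness $t$ carries the incremental bound $\|\phi(t,y)-\phi(t,x)\|\le re^{Lt}$; the triangle inequality with \eqref{eq:verify-exp-decreasing} and $\|x-x^*\|-r\le\|y-x^*\|$ gives the decay; and non-expansiveness of $\mathrm{sd}$ with \eqref{eq:verify-feasibility} places $\phi(t,y)$ in $S$. The paper simply asserts the incremental Grönwall estimate from the one-sided Lipschitz property, so your derivation via \eqref{eq:curve norm}, your existence/continuation bookkeeping, and your check that $\mathrm{sd}\le 0$ implies membership in the compact set $S$ are added rigor rather than a different route.
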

\begin{proof}
Let \(V(\cdot):=\|\cdot-x^*\|\). Since system~\eqref{eq:system} is \(L\)-one-sided-Lipschitz on \(\mathcal R^\tau(S\cup B_r(x))\), for every \(y\in B_r(x)\) and every \(t\in[0,\tau]\),
\begin{equation}\label{eq:Gronwall-bound}
\|\phi(t,x)-\phi(t,y)\|\le e^{Lt}\|x-y\|\le r e^{Lt}.
\end{equation}
Hence,
\begin{equation}\label{eq:phi-upper-new}
\|\phi(t,y)-x^*\|
\le
\|\phi(t,x)-x^*\|+r e^{Lt}.
\end{equation}
Let \(t^*\in(0,\tau]\) satisfy \eqref{eq:verify-exp-decreasing}--\eqref{eq:verify-feasibility}. Then, for any \(y\in B_r(x)\),
\begin{align*}
e^{\alpha t^*}\|\phi(t^*,y)-x^*\|
&\le e^{\alpha t^*}\bigl(\|\phi(t^*,x)-x^*\|+r e^{Lt^*}\bigr) \\
&\le \|x-x^*\|-r
\le \|y-x^*\|,
\end{align*}
where the last inequality follows from \(y\in B_r(x)\). Therefore,
\[
e^{\alpha t^*}V(\phi(t^*,y))\le V(y), \qquad \forall y\in B_r(x).
\]
It remains to show that \(t^*\in T_S(y;\tau)\). To this end, note that:
\begin{align*}
\mathrm{sd}(\phi(t^*,y),S)
&\le \mathrm{sd}(\phi(t^*,x),S)+\|\phi(t^*,y)-\phi(t^*,x)\|\\
&\le \mathrm{sd}(\phi(t^*,x),S)+r e^{Lt^*}
\le 0,
\end{align*}
where the first step uses the non-expansiveness of sd, the second follows from the Lipschitz bound \eqref{eq:Gronwall-bound}, and the last from condition \eqref{eq:verify-feasibility}.
Hence \(\phi(t^*,y)\in S\), $t^*\in(0,\tau]$, and therefore \(t^*\in T_S(y;\tau)\). Consequently,
\[
\min_{s\in T_S(y;\tau)} e^{\alpha s}V(\phi(s,y)) \le V(y),
\qquad \forall y\in B_r(x),
\]
which proves the claim.
\end{proof}
\begin{remark}\label{rem:verify feasibility}
    We make a few remarks regarding Theorem \ref{thm:verification-norm-ERLF}. Checking \eqref{eq:verification-condition} requires the simulation of one trajectory over a finite time. It is critical that \eqref{eq:verify-exp-decreasing} and \eqref{eq:verify-feasibility} both are satisfied by the same $t$. However, when the set $S=B_R(x^*)$, \eqref{eq:verify-exp-decreasing} implies that $\phi(t^*,x)$ is automatically in $S$, avoiding the need for     \eqref{eq:verify-feasibility}, thus simplifying the verification process.
\end{remark}

\subsection{Verifying $\varepsilon$-ERLFs over a Set and its Complexity}\label{ssec:sample-complexity}

\dem{Theorem~\ref{thm:verification-norm-ERLF} provides a local mechanism for verifying
that a norm Lyapunov function satisfies the ERLF recurrence condition \eqref{eq:eps-ERLF} over a neighborhood \(B_r(x)\) of a certain $x\in S$. To verify $\varepsilon$-ERLF over $S$ as in Definition \ref{defn:epsilon-erlf}, we must extend this verification to  $K_\varepsilon=\mathrm{cl} (S\backslash B_\varepsilon(x^*))$. This suggests covering $K_\varepsilon$ by an appropriate set of balls, each verified as in Theorem~\ref{thm:verification-norm-ERLF} .
Consider first the possibility of using a fixed radius $r>0$ for all such verification balls. Since there must be a ball (say with center $\tilde{x}$) that touches $\partial B_\varepsilon(x^*)$, we must have $\|\tilde{x}-x^*\|\leq r+\varepsilon$. This implies the right-hand side of \eqref{eq:verify-exp-decreasing} is bounded by $\varepsilon$, which in turn requires $r<\varepsilon$. Since the balls must cover a set of possibly large radius $R := \sup_{x\in S}\|x-x^*\|$, this requires a number of balls (and thus a number of  trajectory evaluations) of at least order $O((R/\varepsilon)^d)$, leading to a prohibitive sample complexity. The natural alternative is to adapt the radius $r$ of the covering balls to the distance from equilibrium. We now describe one such procedure, for simplicity restricting attention to \(S=B_R(x^*)\). We construct a layered covering of the \(K_\varepsilon=\mathrm{cl}(S\setminus B_\varepsilon(x^*))\) using concentric annuli centered at \(x^*\). Specifically,
\[
A_i := \{x\in S : R_i \ge \|x-x^*\| > R_{i+1}\}, \qquad i=0,\dots,n-1,
\]
where \(R_0:=R\), \(R_i:=\rho^i R\), and \(\rho\in(0,1)\). Each layer \(A_i\) will be covered by balls of radius \(r_i:=\mu R_i\), with \(\mu\in(0,1)\). We have thus a geometric progression in both radii, with resolution increasing in the direction of the equilibrium; this will be key to controlling the total number of required balls.}

\aem{Theorem~\ref{thm:verification-norm-ERLF} provides a local mechanism for verifying the ERLF recurrence condition~\eqref{eq:eps-ERLF} on a neighborhood $B_r(x)$ of $x\in S$. To verify $\varepsilon$-ERLF over $S$ (Definition~\ref{defn:epsilon-erlf}), we must cover $K_\varepsilon=\mathrm{cl}(S\setminus B_\varepsilon(x^*))$ by such neighborhoods.
A uniform radius $r>0$ is infeasible: any ball touching $\partial B_\varepsilon(x^*)$ satisfies $\|\tilde{x}-x^*\|\le r+\varepsilon$, so~\eqref{eq:verify-exp-decreasing} forces $r<\varepsilon$, which in turn requires $\Omega((R/\varepsilon)^d)$ balls to cover $S$ of radius $R:=\sup_{x\in S}\|x-x^*\|$---prohibitive. The natural alternative is to adapt $r$ to the distance from equilibrium; restricting to $S=B_R(x^*)$, we construct a layered covering of $K_\varepsilon$ via concentric annuli
\[
A_i := \{x\in S : R_i \ge \|x-x^*\| > R_{i+1}\}, \qquad i=0,\dots,n-1,
\]
where $R_0:=R$, $R_i:=\rho^i R$, and $\rho\in(0,1)$. Each layer $A_i$ is covered by balls of radius $r_i:=\mu R_i$, with $\mu\in(0,1)$. The geometric progression in both radii---resolution increasing toward the equilibrium---will be key to controlling the total number of required balls.}

Applying Theorem~\ref{thm:verification-norm-ERLF} for the balls of each annulus\red{, restricted to $B_{r_i}(x)\cap S$ so that all trajectories involved remain in $\mathcal{R}^{\tau}(S)$ and the constant $L$ applies}, we must verify, for all \(x\in A_i\), the condition 
\begin{equation}\label{eq:key-ineq}
\min_{t \in T_{S}(x;\tau)} e^{\alpha t}
\bigl( \|\phi(t,x) - x^*\| + r_i e^{L t} \bigr)
\leq \|x - x^*\| - r_i.
\end{equation}
Recall from Remark~\ref{rem:verify feasibility}: when \(S = B_R(x^*)\), \eqref{eq:verify-exp-decreasing} alone suffices for verification.
The key question is how to choose the relevant parameters $(\rho,\mu)$ to allow for the preceding condition to hold for all $i$, if at all possible. Clearly, the answer depends on the underlying dynamics. 

\dem{The following result provides, for an exponentially stable equilibrium, a recipe for parameter choices for which the verification is feasible and with moderate complexity.}



\begin{theorem}[Sample Complexity for Verifying $\varepsilon$-ERLF]
\label{thm:sample-complexity}
Consider system~\eqref{eq:system}, an equilibrium \(x^*\in D\subseteq\mathbb{R}^d\), and a constant \(R>0\). Let \(S := B_R(x^*)\) and $L := L_{\mathcal{R}^{\tau}(S)}$. 
Suppose \(x^*\) is \(\lambda\)-exponentially stable over \(S\) with constant \(C\ge1\), and choose \(\alpha\in(0,\lambda)\) such that \(\beta:=\frac{\lambda-\alpha}{\lambda+L}\in(0,1)\). Define \(C_\beta:=C^{\frac{1-\beta}{\beta}}\).

Then, the $\varepsilon$-ERLF recurrence condition for \(V(x)=\|x-x^*\|\) over
\(K_\varepsilon:=\mathrm{cl}(S\setminus B_\varepsilon(x^*))\) 
can be verified with at most 
\begin{equation}\label{eq:complexity}
N(\varepsilon)
:= \ln\!\left(\frac{R}{\varepsilon}\right)3^{d+1}\frac{de}{\beta}(2+C_\beta)^{\frac{d}{\beta}}
= O\!\left(\log\!\left(\frac{R}{\varepsilon}\right)\right)
\end{equation}
trajectory evaluations of duration
\[
\tau=\frac{1}{\lambda-\alpha}\ln\!\left(C(2+C_\beta)e^{\beta/d}\right).
\]
\end{theorem}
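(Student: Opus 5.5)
The plan is to instantiate the layered covering described just before the statement, with a single common verification time $t=\tau$ for every ball. All parameters $(\tau,\rho,\mu)$ will be fixed explicitly from $(C,\lambda,\alpha,L,d)$. Correctness comes from Theorem~\ref{thm:verification-norm-ERLF} applied to each ball $B_{r_i}(x)\cap S$. Since $S=B_R(x^*)$, Remark~\ref{rem:verify feasibility} lets me check only \eqref{eq:verify-exp-decreasing}: the membership $\phi(\tau,x)\in S$ follows automatically. The count then comes from a volumetric covering bound per annulus, multiplied by the number of annuli.

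\emph{Step 1 (sufficient scalar inequality).} Take a center $x\in A_i$, so $R_{i+1}<\|x-x^*\|\le R_i$, and a radius $r_i=\mu R_i$. By $\lambda$-exponential stability, $\|\phi(\tau,x)-x^*\|\le Ce^{-\lambda\tau}R_i$. Hence \eqref{eq:key-ineq} at $t=\tau$ holds whenever
\[
e^{\alpha\tau}\bigl(Ce^{-\lambda\tau}+\mu e^{L\tau}\bigr)R_i\le (\rho-\mu)R_i,
\]
which is equivalent to
\[
Ce^{-(\lambda-\alpha)\tau}+\mu e^{(\alpha+L)\tau}+\mu\le\rho .
\]
The $R_i$ cancels, so one choice of $(\tau,\rho,\mu)$ works for every layer. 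This scale invariance is what gives the logarithmic count.

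\emph{Step 2 (parameter choice).} Set $q:=Ce^{-(\lambda-\alpha)\tau}$ and write $e^{(\alpha+L)\tau}=e^{(\lambda+L)\tau}e^{-(\lambda-\alpha)\tau}$. Since $(\lambda+L)/(\lambda-\alpha)=1/\beta$, this gives
\[
e^{(\alpha+L)\tau}=(C/q)^{1/\beta}\,(q/C)=C_\beta\, q^{1-1/\beta}.
\]
I choose $\mu:=q^{1/\beta}\le q$, which yields $\mu e^{(\alpha+L)\tau}=C_\beta q$. The left side of the inequality in Step~1 is then at most $(2+C_\beta)q$. Taking $q:=e^{-\beta/d}/(2+C_\beta)$ and $\rho:=e^{-\beta/d}$ closes the inequality. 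This choice of $q$ is exactly $\tau=\frac{1}{\lambda-\alpha}\ln\bigl(C(2+C_\beta)e^{\beta/d}\bigr)$, as in the statement.

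\emph{Step 3 (counting).} Each $A_i\subset B_{R_i}(x^*)$ is covered by at most $(1+2R_i/r_i)^d\le(3/\mu)^d$ balls of radius $r_i=\mu R_i$, using a maximal $r_i$-separated set and the volume comparison valid for any norm. Substituting $\mu=q^{1/\beta}$ gives
\[
(3/\mu)^d=3^d(2+C_\beta)^{d/\beta}e .
\]
The number of layers needed to reach $R_n\le\varepsilon$ is $\lceil \ln(R/\varepsilon)/\ln(1/\rho)\rceil=\lceil d\ln(R/\varepsilon)/\beta\rceil$. Absorbing the ceiling, together with boundary effects from balls centered in $A_i$ but protruding into $A_{i\pm1}$, into the extra factor $3$ gives $N(\varepsilon)$ as in \eqref{eq:complexity}.

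I expect Step~2 to be the main obstacle. The difficulty is the tension between a long $\tau$, which shrinks the stability term, and the Grönwall growth $e^{L\tau}$, which inflates the tube of radius $r_i$. Resolving it requires the specific coupling $\mu=q^{1/\beta}$ so that both terms scale like $q$. A secondary technical point is making sure $L=L_{\mathcal R^\tau(S)}$ legitimately applies to trajectories from $B_{r_i}(x)$. For this I restrict each ball to $B_{r_i}(x)\cap S$, as the text preceding the theorem indicates, so that only initial conditions in $S$ are used.
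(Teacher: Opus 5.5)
Your proposal is correct and is essentially the paper's proof. It uses the same scale-free sufficient inequality $Ce^{-(\lambda-\alpha)\tau}+\mu e^{(\alpha+L)\tau}+\mu\le\rho$, the same $\tau$--$\mu$ coupling (your $q$ is the paper's $\mu^\beta$, i.e.\ $\tau=\frac{1}{\lambda-\alpha}\ln(C/\mu^\beta)$), and the same volumetric count; the only difference is that you plug in $\rho=e^{-\beta/d}$ directly, whereas the paper obtains it by minimizing $\mu(\rho)^{-d}/\ln(\rho^{-1})$ over $\rho$.

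One caveat concerns the last step. No boundary correction is needed, since a maximal $r_i$-separated subset of $A_i$ already covers $A_i$ with at most $(1+2/\mu)^d$ centers lying in $A_i$. The factor $3$ absorbs the ceiling only when $d\ln(R/\varepsilon)/\beta$ is bounded below, which is why the paper's proof restricts to $\varepsilon\le Re^{-\beta/(2d)}$. Without such a restriction the bound fails: for $\varepsilon$ close to $R$, $N(\varepsilon)$ tends to $0$ while at least one trajectory is still required.
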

\begin{proof}


Since \(x^*\) is \(\lambda\)-exponentially stable over \(B_R(x^*)\), for all \(x\in A_i\),
\[
e^{\alpha t}\|\phi(t,x)-x^*\|
\le C e^{(\alpha-\lambda)t}\|x-x^*\|
\le C e^{(\alpha-\lambda)\tau} R_i.
\]
Hence the left-hand side of \eqref{eq:key-ineq} is upper bounded by
\[
R_i\bigl(C e^{(\alpha-\lambda)\tau} + \mu e^{(\alpha+L)\tau}\bigr).
\]
Using \(\|x-x^*\|\ge R_{i+1}=\rho R_i\) and \(r_i=\mu R_i\),
\[
\|x-x^*\|-r_i \ge R_i(\rho-\mu).
\]
Thus \eqref{eq:key-ineq} holds if
\begin{equation}\label{eq:mu-rho-condition}
C e^{(\alpha-\lambda)\tau} + \mu e^{(\alpha+L)\tau} \le \rho - \mu.
\end{equation}
Choosing now
$\tau(\mu)=\frac{1}{\lambda-\alpha}\ln\!\left(\frac{C}{\mu^\beta}\right)$, 
so that 
\begin{align*}
&C e^{(\alpha-\lambda)\tau(\mu)}=Ce^{-\ln\left(\frac{C}{\mu^\beta}\right)}=\mu^\beta,\\
&\mu e^{(\alpha+L)\tau}=\mu e^{\frac{\alpha + L}{\lambda-\alpha}\ln\left(\frac{C}{\mu^\beta}\right)} =C_\beta\,\mu^\beta,
\end{align*}
turns \eqref{eq:mu-rho-condition} into
\begin{equation}\label{eq:mu-rho-condition-2}
(1+C_\beta)\mu^\beta \le \rho - \mu \;\;\iff\;\; (1+C_\beta)\mu^\beta +\mu \le {\rho}. 
\end{equation}

We will show next that the following choice satisfies \eqref{eq:mu-rho-condition-2}:
\begin{equation}\label{eq:mu(rho)}
\mu(\rho):=\left(\frac{\rho}{2+C_\beta}\right)^{1/\beta}.
\end{equation}
Note first that since $\mu(\rho), \beta \in(0,1)$ we have $\mu(\rho)<\mu(\rho)^\beta$.
Thus
\[
(1+C_\beta)\mu(\rho)^\beta+\mu(\rho)<(2+C_\beta)\mu(\rho)^\beta= {\rho}.
\]

Next, since \(R_i=\rho^i R\), the number of layers satisfies
\begin{equation}\label{eq:number-annuli}
\rho^n R \le \varepsilon
\;\Rightarrow\;
n=\left \lceil \ln\!\left(\frac{R}{\varepsilon}\right)\frac{1}{\ln(\rho^{-1})}\right\rceil.
\end{equation}

To bound the number of balls per layer, we use standard relations from \cite{wainwright2019high} between covering numbers $\mathcal{N}(\mathcal{K},r)$ (minimum number of $r$-balls $B_r$ covering  $\mathcal{K}$) and packing numbers $\mathcal{P}(\mathcal{K},r)$ (maximum number of disjoint balls of radius $r/2$ whose centers belong to $\mathcal{K}$) of a compact set $\mathcal{K}$, namely:
\[
 \mathcal N(\mathcal{K},r)\le \mathcal P(\mathcal{K},r)\le \frac{\mathrm{vol}(\mathcal{K}+B_{\frac{r}{2}})}{\mathrm{vol}(B_{\frac{r}{2}})}.
\]
Applying these to \(A_i\!=\!B_{R_i}(x^*)\setminus B_{R_{i+1}}(x^*)\!\subset\! B_{R_i}(x^*)\) gives\footnote{In the rest of the proof, the ball center $x^*$ is left implicit.}
\begin{align}
\mathcal N(A_i,r_i)
&\le \mathcal N(B_{R_i},r_i)
\le \mathcal P(B_{R_i},r_i) \nonumber
    \le \frac{\mathrm{vol}(B_{R_i+\frac{r_i}{2}})}{\mathrm{vol}(B_{\frac{r_i}{2}})}\\
&
    \!=\!\frac{\mathrm{vol}(B_{2R_i+r_i})}{\mathrm{vol}(B_{r_i})}
    \!=\!\left(\frac{2+\mu(\rho)}{\mu(\rho)}\right)^d \!\le\! \frac{3^d}{\mu(\rho)^d}, \label{eq:annuli-cover}
\end{align}
where we used $\mathrm{vol}(B_{cr})=c^d\mathrm{vol}(B_{r})$,   $\frac{\mathrm{vol}(B_{R})}{\mathrm{vol}(B_{r})}=\left(\frac{R}{r}\right)^d$, \(r_i=\mu(\rho)R_i\) and $\mu(\rho)\in(0,1)$.

Hence the total number of trajectories is bounded by
\begin{equation}\label{eq:bound-prelim}
n\,\mathcal N(A_i,r_i)
\le
\frac{3^d}{\mu(\rho)^d}\left \lceil\frac{\ln(R/\varepsilon)}{\ln(\rho^{-1})}\right\rceil\le\frac{3^{d+1}}{\mu(\rho)^d}\frac{\ln(R/\varepsilon)}{\ln(\rho^{-1})}.
\end{equation}
The above bound is minimized over \(\rho\in(0,1)\) at \(\rho^*=e^{-\beta/d}\), which substituted back and using \eqref{eq:mu(rho)} gives\red{, for $\varepsilon\le R\,e^{-\beta/(2d)}$,} the bound 
\[
N(\varepsilon)
:=
\ln\!\left(\frac{R}{\varepsilon}\right)3^{d+1}\frac{de}{\beta}(2+C_\beta)^{\frac{d}{\beta}}
\]
on the total number of trajectory evaluations required to certify the $\varepsilon$-ERLF condition for $V(x)=\|x-x^*\|$ over $K_\varepsilon$. 
\end{proof}

%

\hl{
\begin{remark}[Performance vs Complexity Trade-off]
Theorem~\ref{thm:sample-complexity} highlights the intrinsic trade-off between the performance gap $\lambda-\alpha$ and the sample complexity: as $\lambda-\alpha\to0^+$, $\beta\to0^+$ and $C_\beta\to\infty$, while at the opposite extreme if $\lambda\to\infty$, $\beta,C_\beta\to 1$ and the bound simplifies to $N(\varepsilon)=O(d\ln(R/\varepsilon)\,9^d)$ with arbitrarily small $\tau\to 0^+$. A constant gap thus yields sample complexity exponentially better than $O((R/\varepsilon)^d)$.
\end{remark}
 }

\section{Numerical Methods}\label{ct-numerical}

Building on the sample-complexity result of Section~\ref{ssec:sample-complexity}, this section develops parallelizable algorithms that certify the $\varepsilon$-ERLF property of $V(x)=\|x-x^*\|$ directly from trajectory data.
To simplify the exposition, we take $x^*=0$ throughout and write $B_r:=B_r(0)$ for $r>0$; the extension to a general equilibrium is immediate. {Here $\|\cdot\|$ is the working norm, $V(x)=\|x\|$, and $L$ is its one-sided Lipschitz constant~\eqref{eq:mujacobian}; $B_r(x)$ is the ball in this norm. We also write $Q_h(x):=\{y:\|y-x\|_\infty\le h\}$ for the cube ($\infty$-ball) of half-spacing $h$, with $Q_h:=Q_h(0)$, and let $c_\infty$ be the norm-equivalence constant in $\|x\|\le c_\infty\|x\|_\infty$ ($c_\infty=1$ for $\|\cdot\|_\infty$, $c_\infty=\sqrt{d}$ for $\|\cdot\|_2$), so that $Q_h(x)\subseteq B_{c_\infty h}(x)$.}

We present two complementary verification tools, addressing dual questions about the $\varepsilon$-ERLF condition:
\begin{enumerate}
    \item[(T1)] \emph{Best rate over a fixed region.} Given ${Q_R}$, find the largest $\alpha>0$ for which $V$ satisfies the $\varepsilon$-ERLF condition on ${Q_R} \setminus B_\varepsilon$ (Algorithm~\ref{alg:region-verification}, Section~\ref{ssec:verification region}).
    \item[(T2)] \emph{Largest certified region for a fixed rate.} Given ${Q_R}$ and a target rate $\alpha>0$, find a subset $S \subseteq {Q_R}$ such that $V$ satisfies the $\varepsilon$-ERLF condition on $S \setminus B_\varepsilon$ (Algorithm~\ref{alg:grow-alpha-roa}, Section~\ref{ssec:roa mining}).
\end{enumerate}
Both tools rely on a common set of supporting routines, described next in Section~\ref{ssec:supporting methods}. 

\subsection{Supporting Tools}\label{ssec:supporting methods}

Three constructions underlie both verification tools: a layered candidate grid, a per-ball rate-feasibility check, and a refinement step. We assume here that $\tau$ and $L$ are given; their selection is treated later, separately for each method in the respective  subsections~\ref{ssec:verification region} and~\ref{ssec:roa mining}.

\mypara{Initial grid setup}
Given the inner radius $\varepsilon$ and outer radius $R$, we discretize {$Q_R \setminus B_\varepsilon$} by a layered grid of $\mathcal{O}(3^d m)$ candidate {cells}, where $m$ is the number of layers. Layer $\ell\in\{1,\dots,m\}$ contributes $3^d-1$ {cells} (excluding the origin), {each a cube $Q_{h_\ell}(x_i)$ of half-spacing}
\[
    {h_\ell := 3^{\ell-1}\,\varepsilon/c_\infty.}
\]
\textcolor{black}{A cell $Q_h(x)$ is certified through the verification ball $B_{c_\infty h}(x)\supseteq Q_h(x)$. The innermost half-spacing $\varepsilon/c_\infty$ makes the excluded cube $Q_{\varepsilon/c_\infty}\subseteq B_\varepsilon$, so the full $\varepsilon$-ERLF neighborhood is removed.}
Choosing $m$ such that {$R \le 3^m\varepsilon/c_\infty$} ensures that {these cells, and hence the balls $B_{c_\infty h_\ell}(x)$, cover $Q_R \setminus B_\varepsilon$.} This construction is exponentially more efficient than a uniform $\varepsilon$-grid, which would require $\mathcal{O}((R/\varepsilon)^d)$ points and is also aligned with our sample complexity bounds of Theorem \ref{thm:sample-complexity}; an example for $m=2$, $d=2$ is shown in Figure~\ref{fig:initial grid}.


\begin{figure}[!htbp]
    \centering
    \includegraphics[width=0.7\linewidth]{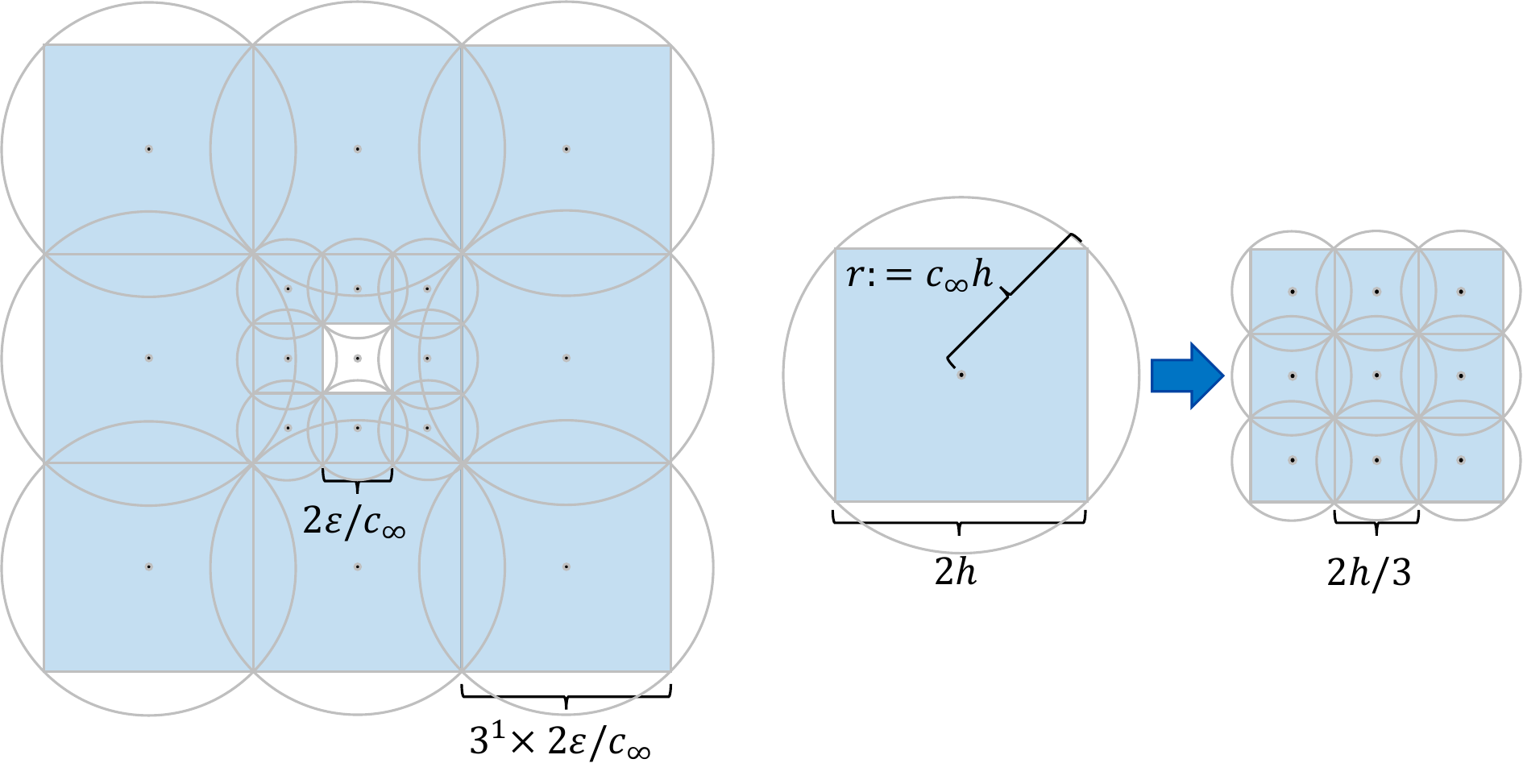}
    \caption{Left: initial layered grid for \textcolor{black}{$R = 3^m\varepsilon/c_\infty$} with $m=2$ layers; \textcolor{black}{the cells are cubes $Q_{h_\ell}(x_i)$ of half-spacing $h_\ell=3^{\ell-1}\varepsilon/c_\infty$, each certified through the containing $\|\cdot\|$-ball $B_{c_\infty h_\ell}(x)$ (here Euclidean, $c_\infty=\sqrt{d}$);} dots are grid points and the central white cell is the excluded cube \textcolor{black}{$Q_{\varepsilon/c_\infty}\subseteq B_\varepsilon$} around $x^*$. Right: the $3^d$-way subdivision performed by \textcolor{black}{$\mathrm{Split}(x,h)$, splitting a cube $Q_h(x)$ into $3^d$ sub-cubes $Q_{h/3}$, each with verification ball $B_{c_\infty h/3}$.}}
    \label{fig:initial grid}
\end{figure}

\mypara{Verifying decay of a ball at a given point}
For a given center $x \in \mathbb{R}^d$ and radius $r>0$, {we let} $\alpha_{\max}(x,r;S)$ {denote} the largest $\alpha$ for which the ball $B_r(x)$ satisfies the verification condition~\eqref{eq:verification-condition} relative to a reference set $S$. \textcolor{black}{This is a one-dimensional maximization of $\alpha$ over $t\in(0,\tau]$ subject to~\eqref{eq:verify-exp-decreasing}--\eqref{eq:verify-feasibility}, by Theorem~\ref{thm:verification-norm-ERLF}.} \textcolor{black}{When $S$ is a sub-level set of $V$, which, for $S=Q_R$, holds exactly when $\|\cdot\|=\|\cdot\|_\infty$,  condition~\eqref{eq:verify-feasibility} is implied by~\eqref{eq:verify-exp-decreasing} (see Remark~\ref{rem:verify feasibility}); we then write $\alpha_{\max}(x,r):=\alpha_{\max}(x,r;Q_R)$.} If no feasible $\alpha$ exists, then $\alpha_{\max}(x,r):=-\infty$.


\mypara{Splitting failed points}
When {$\alpha_{\max}(x,r;S)$} fails to certify a {box of half-spacing $h$} (i.e., $\alpha_{\max}(x,{c_\infty h})<0$), the failure may reflect that the {box} is simply too large for the local Lipschitz bound to suffice. In that case {$\mathrm{Split}(x,h)$ subdivides it} into $3^d$ {sub-boxes of half-spacing $h/3$} centered on a uniform grid {(offsets $\pm\tfrac{2}{3}h$ per axis), illustrated} for $d=2$ in the right panel of Figure~\ref{fig:initial grid}. The split is fully parallelizable and can be applied recursively.



\subsection{Find Best Decay Rate $\alpha_{\min}$ over a Given Region ${Q_R}$}\label{ssec:verification region}

We now address Tool (T1): given an outer radius $R$, find the largest $\alpha>0$ for which $V(x)={\|x\|}$ satisfies the $\varepsilon$-ERLF condition on ${Q_R}$. The approach combines the supporting routines of Section~\ref{ssec:supporting methods} with a procedure for selecting $\tau$ and estimating $L$ tailored to the fixed-set nature of this setting.

\mypara{Selecting $\tau$ and estimating $L$}
Both constants are determined from a single set of trajectories initialized on the boundary of ${Q_R}$. We start by constructing a uniform grid $\Gamma\subset\partial {Q_R}$,  with points separated $2h>0$ in the $\|\cdot\|$ norm,
and simulate each trajectory $\phi(t,x)$, $x\in \Gamma$.

\emph{Selection of $\tau$.} We seek a value of $\tau$ for which every trajectory starting in $\Gamma$ either returns to ${Q_R}$ or enters $B_\varepsilon$ within $[0,\tau]$. Starting from a small candidate value, we increase $\tau$ until this condition is met or until it exceeds a maximum horizon $\tau_{\max}$; in the latter case the procedure is aborted and $R$ can be reduced before retrying.

\emph{Estimation of $L$.} Once $\tau$ is fixed, the worst-case excursion radius
\[
    R_{\max} := \max_{x\in \Gamma,\; t\in[0,\tau]} {\|\phi(t,x)\|_\infty}
\]
provides a data-driven approximation of the radius of the reachable set $\mathcal{R}^{\tau}({Q_R})$. We thus pick $R'>R_{\max}$ (with a small slack for discretization) and bound the one-sided Lipschitz constant by
\[
    L := \sup_{z\in {Q_{R'}}} {\mu}\!\left(\tfrac{\partial f}{\partial z}(z)\right),
\]
{where $\mu$ is the matrix measure of the working norm (cf.~\eqref{eq:mujacobian}).} In the examples below, this supremum is obtained either by {maximizing $\mu$ over the corners of $Q_{R'}$, which is exact since $\partial f/\partial z$ is affine} (Section~\ref{ssec:experiments1}), or in closed form (Section~\ref{ssec:roa mining}); data-driven estimators of $L$ are also possible~\cite{knuth2020planning-fcc}.

To certify robustness to discretization, we verify that an $h$-neighborhood of every $x\in \Gamma$ remains inside ${Q_{R'}}$ over $[0,\tau]$:
\begin{equation}\label{eq:robust-grid}
    \max_{t\in[0,\tau]}\;\max_{x\in \Gamma}\;{\|\phi(t,x)\|_\infty}+h\,e^{tL}\;\le\; R'.
\end{equation}
If \eqref{eq:robust-grid} fails, the grid $\Gamma$ is refined and the procedure repeated.

\mypara{Region verification algorithm}
With $\tau$ and $L$ in hand, the procedure is summarized in Algorithm~\ref{alg:region-verification}. We construct the initial layered grid of Section~\ref{ssec:supporting methods}, producing candidate pairs $G=\{(x_i,{h_i})\}$ covering ${Q_R}\setminus B_\varepsilon$. For each pair we compute two quantities:
\begin{align*}
\underline{\alpha}_i &:= \alpha_{\max}(x_i,{c_\infty\,h_i}),
& &\text{(\emph{lower}: certifies the {box at} $x_i$)}\\
\overline{\alpha}_i &:= \alpha_{\max}(x_i,0),
& &\text{(\emph{upper}: certifies the center $x_i$)}.
\end{align*}
The certified rate over the entire grid is $\underline{\alpha}_{i^*}$ with $i^*:=\arg\min_i \underline{\alpha}_i$. The relative gap at the worst point,
\[
    \delta_{i^*}:=\frac{\overline{\alpha}_{i^*}-\underline{\alpha}_{i^*}}{\underline{\alpha}_{i^*}},
\]
measures how much can still be gained by further refinement. Whenever $\delta_{i^*}$ exceeds a threshold $\theta\in(0,1)$, the $k$ {boxes} with smallest $\underline{\alpha}_i$ are subdivided using $\textsc{Split}(\cdot)$ and the loop is repeated; the procedure stops once $\delta_{i^*}\le\theta$ or after a maximum of $m$ refinements. If successful, it returns a uniform lower bound on $\alpha$ such that $V(x)={\|x\|}$ satisfies the $\varepsilon$-ERLF condition on ${Q_R}\setminus B_\varepsilon$, as guaranteed by Theorem~\ref{thm:verification-norm-ERLF}.

\setcounter{algocf}{0}
\begin{algorithm*}[tb]
\caption{$\mathrm{Find}\text{-}\alpha_{\min}(R, \varepsilon,\theta)$ — Best decay rate $\alpha$ over ${Q_R}\setminus B_\varepsilon$ via parallel ball certification}
\label{alg:region-verification}

\KwIn{Outer radius $R>0$, inner radius $\varepsilon\in(0,R)$, gap threshold $\theta\in(0,1)$, maximum refinements $m$}
\KwOut{Lower bound on the certified value of $\alpha$}

Construct initial grid $G\gets\{(x_i,{h_i})\}$ covering $Q_R\setminus B_\varepsilon$; set $\texttt{counter}\gets 0$\;
For all $(x_i,{h_i})\in G$, compute $\underline{\alpha}_i\gets\alpha_{\max}(x_i,{c_\infty\,h_i})$ and $\overline{\alpha}_i\gets\alpha_{\max}(x_i,0)$\tcp*{lower / upper bounds}
Let $i^*\gets\arg\min_{i\in[|G|]}\underline{\alpha}_i$\tcp*{worst box in the grid}
\While{ $\texttt{counter}\le m-1$ \textbf{and} $(\overline{\alpha}_{i^*}-\underline{\alpha}_{i^*})/\underline{\alpha}_{i^*}>\theta$ }{
    Replace the $k$ {boxes} in $G$ with smallest $\underline{\alpha}_i$ by $\textsc{Split}(\cdot)$; $\texttt{counter}\gets\texttt{counter}+1$\;
    Recompute $\underline{\alpha}_i$ and $\overline{\alpha}_i$ over the updated $G$; update $i^*$\;
}
\Return $\underline{\alpha}_{i^*}$
\end{algorithm*}

\mypara{Numerical example: bilinear systems}\label{ssec:experiments1}
We apply Algorithm~\ref{alg:region-verification} to two bilinear families parametrized by their nonlinearity strength $\sigma$. {We use the Euclidean norm $\|\cdot\|_2$ ($c_\infty=\sqrt{d}$), since the spiraling linearization makes $\mu_\infty$ over-conservative.} In two dimensions,
\begin{align}\label{eq:experiment dynamics}
    \begin{bmatrix}
    \dot{x}_1\\\dot{x}_2
    \end{bmatrix}=
    \begin{bmatrix}
    0& 2\\-1& -1
    \end{bmatrix}
    \begin{bmatrix}
    x_1\\ x_2
    \end{bmatrix}+B_1
    \begin{bmatrix}
    x_1^2\\x_1x_2\\ x_2^2
    \end{bmatrix},
\end{align}
and in three dimensions,
\begin{align}\label{eq:experiment dynamics2}
    \begin{bmatrix}
    \dot{x}_1\\\dot{x}_2\\\dot{x}_3
    \end{bmatrix}=
    \begin{bmatrix}
    -1& 0&0\\0.5& -1&0\\0.5& 0.5&-1
    \end{bmatrix}
    \begin{bmatrix}
    x_1\\ x_2\\ x_3
    \end{bmatrix}+B_2
    \begin{bmatrix}
    x_1^2\\\vdots\\ x_3^2
    \end{bmatrix},
\end{align}
with $B_1\in\mathbb{R}^{2\times3}$ and $B_2\in\mathbb{R}^{3\times 6}$ whose entries are drawn i.i.d.\ from $\mathcal{N}(0,\sigma)$. Increasing $\sigma$ amplifies the nonlinearity.
In all experiments we set $R=0.7$ and $\varepsilon=0.01$, and parallelize trajectory simulation with the TorchODE toolbox~\cite{lienen2022torchode}. {The one-sided Lipschitz constant over $Q_{R'}$ is obtained by maximizing $\mu_2(\partial f/\partial z)$ over its corners, exact since $\partial f/\partial z$ is affine.}
Figure~\ref{fig:exp-2} {overlays the phase portrait of~\eqref{eq:experiment dynamics} ($\sigma=0.3$) with the certified region and the adaptive cube sizes produced by Algorithm~\ref{alg:region-verification} across} $Q_R\setminus B_\varepsilon$.

\begin{figure}[htbp]
    \;\;\includegraphics[width=0.425\textwidth]{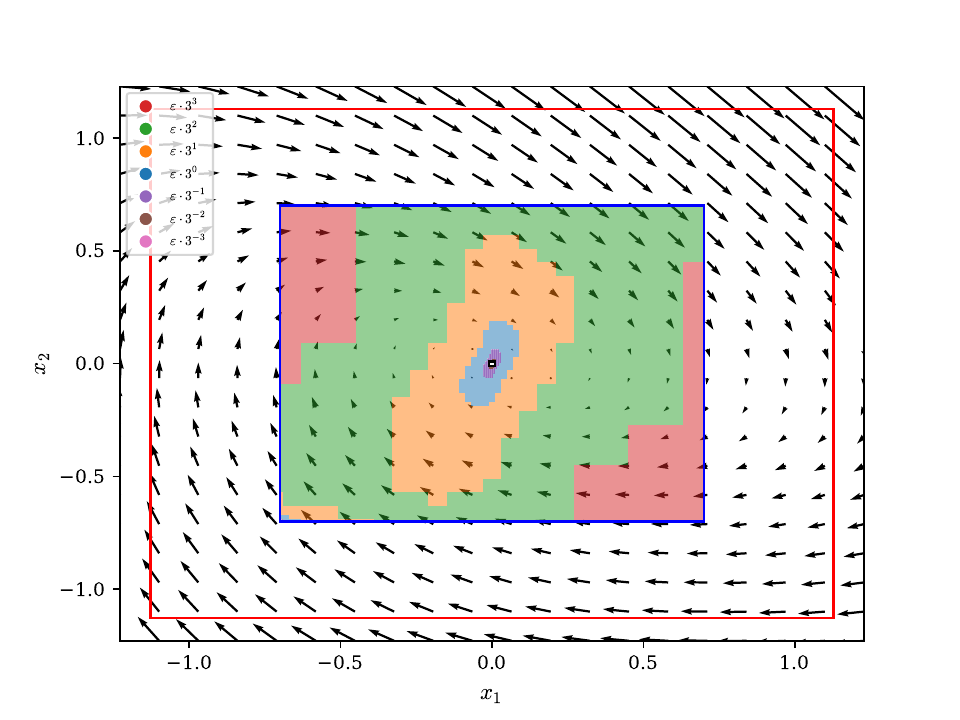}
    \caption{{Certified region for~\eqref{eq:experiment dynamics} ($\sigma=0.3$) over its phase portrait. Blue: region $Q_R$; red: $L$-validity box $Q_{R'}$; black: excluded cube $Q_{\varepsilon/c_\infty}$. Colors give the verification radius $r=\varepsilon\cdot 3^{k}$ ($=c_\infty h$) of the certifying ball (Algorithm~\ref{alg:region-verification}). \hl{$L\simeq 0.58$, $\tau=2.20$}.}}
    \label{fig:exp-2}
\end{figure}

\hl{Table~\ref{table:bilinear} compares the certified rate $\alpha$ and the wall-clock time of Algorithm~\ref{alg:region-verification} against SOSTOOLS~\cite{sostools} on systems~\eqref{eq:experiment dynamics} and~\eqref{eq:experiment dynamics2}. Across both dimensions and all values of $\sigma$, our algorithm certifies a tighter rate $\alpha$.}
\hl{In both dimensions, the runtime of Algorithm~\ref{alg:region-verification} grows with the nonlinearity strength $\sigma$, as larger one-sided Lipschitz constants over $Q_{R'}$ force deeper refinement. While SOSTOOLS is faster in $d=2$, where the underlying SDPs are small, the gap reverses in three dimensions: Algorithm~\ref{alg:region-verification} is faster for every $\sigma$ ($49.5$--$130.8\,$s versus $54.9$--$632.6\,$s), and its advantage widens as $\sigma$ increases, with SOSTOOLS scaling steeply (from $55\,$s at $\sigma=0.1$ to $633\,$s at $\sigma=0.5$).}

\begin{table}[htbp]
\setlength{\tabcolsep}{3pt}
\centering
\begin{tabular}{|c|ccc|ccc|}
\hline
System
   & \multicolumn{3}{c|}{2D system}
   & \multicolumn{3}{c|}{3D system} \\
\hline
$\sigma:$ & 0.3 & 0.6 & 1 & 0.1 & 0.3 & 0.5 \\
\hline\hline
Alg.~\ref{alg:region-verification} $\alpha$:
   & \textbf{\hl{0.422}} & \textbf{\hl{0.442}} & \textbf{\hl{0.336}}
   & \textbf{\hl{0.613}} & \textbf{\hl{0.485}} & \textbf{\hl{0.289}} \\
\hline SoS $\alpha$:
   & 0.360 & 0.247 & 0.223 & 0.309 & 0.341 & 0.213 \\
\hline\hline
Alg.~\ref{alg:region-verification} T (s):
   & \hl{8.3} & \hl{37.9} & \hl{102.6} & \textbf{\hl{49.5}} & \textbf{\hl{88.4}} & \textbf{\hl{130.8}} \\
\hline SoS T (s):
   & \textbf{0.97} & \textbf{1.06} & \textbf{0.94}
   & {54.89} & 276.10 & 632.55 \\
\hline
\end{tabular}
\caption{Certified rate $\alpha$ and runtime: Algorithm~\ref{alg:region-verification} versus SOSTOOLS on the bilinear systems~\eqref{eq:experiment dynamics} and~\eqref{eq:experiment dynamics2}.}
\label{table:bilinear}
\end{table}

\subsection{Estimation of $\alpha$-Regions of Attraction}\label{ssec:roa mining}

We now address Tool (T2): given a target decay rate $\alpha>0$ and a candidate outer radius $R$, identify a subset $S\subseteq Q_R$ over which $V(x)=\|x\|$ satisfies the $\varepsilon$-ERLF condition with rate at least $\alpha$. The structural difference with Subsection~\ref{ssec:verification region} is that the set $S$ is now the \emph{output} of the procedure; consequently, the set-feasibility condition~\eqref{eq:verify-feasibility} is no longer automatic. This shapes both the estimation of $L$ and the structure of the algorithm below. For this method, $\tau$ is chosen a priori, trading off trajectory length against the conservativeness of the output set $S$.

\mypara{Estimating $L$ given $\tau$}
For a candidate subset $S\subseteq Q_R$ to be $\tau$-recurrent, any trajectory $\phi(\cdot,x)$ starting in $S$ must fall into one of two regimes over $[0,\tau]$: either (i)~it remains inside $Q_R$ throughout, or (ii)~it leaves $Q_R$ for some excursion of duration less than $\tau$ and returns. Regime (i) is handled by estimating $L$ over $Q_R$ exactly as in Subsection~\ref{ssec:verification region}. To cover regime (ii), we 
start with a uniform boundary grid $\Gamma\subset\partial Q_R$ (with spacing $2h$), and partition it according to whether each sample's trajectory returns to $Q_R$ within $\tau$:
\[
    \Gamma_{\mathrm{ret}}\!:=\!\{x\!\in\!\Gamma\mid\exists\,t\!\in\!(0,\tau]\!:\phi(t,x)\!\in\! Q_R\}, \quad \Gamma_{\mathrm{nr}}\!:=\!\Gamma\!\setminus\!\Gamma_{\mathrm{ret}}.
\]
The returning samples play the role of $\Gamma$ in Subsection~\ref{ssec:verification region}: setting $R_{\max}:=\max_{x\in\Gamma_{\mathrm{ret}},\,t\in[0,\tau]}{\|\phi(t,x)\|_\infty}$, we choose $R'>R_{\max}$ and bound $L$ over ${Q_{R'}}$ via ${\mu}(\partial f/\partial z)$ as before. The discretization-robustness check~\eqref{eq:robust-grid} carries over verbatim, with $\Gamma_{\mathrm{ret}}$ in place of $\Gamma$.
The complementary samples $\Gamma_{\mathrm{nr}}$ require a one-sided check ensuring that trajectories from an $h$-neighborhood of any non-returning sample also fail to return:
\begin{equation}\label{eq:robust-grid-nr}
    \min_{t\in[0,\tau]}\;\min_{x\in\Gamma_{\mathrm{nr}}}\;\|\phi(t,x)\| - h\,e^{tL}\;>\;R.
\end{equation}
If either check fails, $\Gamma$ is refined and the partition recomputed.

\mypara{Region-growing algorithm}
With $\tau$ given as input and $L$ estimated as above, Algorithm~\ref{alg:grow-alpha-roa} grows the certified set $S$ from the initial layered grid of Section~\ref{ssec:supporting methods}. Each candidate pair $(x_i,{h_i})$ is tested by $\alpha_{\max}(\cdot,\cdot;\cdot)$; {boxes} satisfying the rate threshold are added to a set $\texttt{Positives}$, while failing {boxes} are recursively refined using $\textsc{Split}(\cdot)$ up to depth $m$.

The choice of the reference set inside $\alpha_{\max}(\cdot,\cdot;\cdot)$ controls whether the feasibility condition~\eqref{eq:verify-feasibility} is enforced as the certified region grows. Enforcing it live is computationally expensive: each box would need to be re-tested whenever $\texttt{Positives}$ expands. The Boolean flag $\texttt{Trim}$ decouples the two conditions:
\begin{itemize}
    \item with $\texttt{Trim}=\texttt{False}$, only~\eqref{eq:verify-exp-decreasing} (exponential decay) is checked;
    \item with $\texttt{Trim}=\texttt{True}$,~\eqref{eq:verify-feasibility} is also checked against the candidate region $\bigcup_{(x,{h})\in\texttt{Positives}\cup G} B_{{c_\infty h}}(x)$.
\end{itemize}
In practice, Algorithm~\ref{alg:grow-alpha-roa} is invoked twice: a first pass with $\texttt{Trim}=\texttt{False}$ grows a tentative region $S^{(0)}$ from local trajectory information alone, and a second pass with $\texttt{Trim}=\texttt{True}$ and initial grid $S^{(0)}$ prunes every box whose certifying trajectory escapes $S^{(0)}$.

\begin{algorithm*}[!htb]
\caption{$\mathrm{Find\text{-}\alpha\text{-RoA}}(R, \varepsilon, \alpha, G_0, \texttt{Trim})$ — Certified $\alpha$-region of attraction}
\label{alg:grow-alpha-roa}

\KwIn{Target rate $\alpha>0$, range $R>0$, $\varepsilon\in(0,R)$, initial grid $G_0=\{(x_i,{h_i})\}$, Boolean $\texttt{Trim}$, max refinements $m$}
\KwOut{Subset of $G_0$ certifying $V(x)=\|x\|$ as an $\varepsilon$-ERLF with rate at least $\alpha$}

Set $G\gets G_0$, $\texttt{Positives}\gets\emptyset$, $\texttt{counter}\gets 0$\;
\While{ $\texttt{counter}\le m-1$ \textbf{and} $G\neq\emptyset$ }{
    Define $\texttt{Region}\gets\bigcup_{(x,{h})\in\texttt{Positives}\cup G} B_{{c_\infty h}}(x)$\tcp*{used only when $\texttt{Trim}$}
    \ForEach{ $(x_i,{h_i})\in G$ }{
        $\alpha_i\gets \texttt{Trim}\ ?\ \alpha_{\max}(x_i,{c_\infty\,h_i};\texttt{Region})\ :\ \alpha_{\max}(x_i,{c_\infty\,h_i})$\tcp*{enforce~\eqref{eq:verify-feasibility} if $\texttt{Trim}$}
        \lIf{ $\alpha_i\ge\alpha$ }{ $\texttt{Positives}\gets\texttt{Positives}\cup\{(x_i,{h_i})\}$ }
    }
    $G\gets\textsc{Split}(G\setminus\texttt{Positives})$; $\texttt{counter}\gets\texttt{counter}+1$\tcp*{refine; advance}
}
\Return $\texttt{Positives}$
\end{algorithm*}

\mypara{Numerical example: Kuramoto oscillators}\label{ssec:experiments2}
We apply Algorithm~\ref{alg:grow-alpha-roa} to the Kuramoto oscillator with uniform coupling. {Here we retain the max norm $\|\cdot\|_\infty$ ($c_\infty=1$): its sublevel sets are boxes, so the covering is tight ($r=h$).} For an $n$-dimensional system with coupling constant $k$, the dynamics are
\begin{align}\label{eq:kuramoto1}
    \dot{\theta}_i = \frac{k}{n}\sum_{j=1}^{n}\sin(\theta_j-\theta_i).
\end{align}
To remove the rotational symmetry, we change variables to $\varphi_i:=\theta_i-\theta_n$, {reducing the state dimension to $d=n-1$}. A direct calculation gives the closed-form bound $L\le 2k(n-1)/n$, which we use throughout without further numerical estimation.

\hl{We first run the three-oscillator case with $k=10$ and target rate $\alpha=1$. Figure~\ref{fig:splits6} shows the phase portrait overlaid with the certified $1$-RoA returned by Algorithm~\ref{alg:grow-alpha-roa} with maximum split count $m=6$. The run uses the two-pass invocation described above: a first pass with $\texttt{Trim}=\texttt{False}$ to grow the candidate region, followed by a second pass with $\texttt{Trim}=\texttt{True}$ to enforce~\eqref{eq:verify-feasibility} against the grown region.}


\begin{figure}
    \;\;\includegraphics[width=0.425\textwidth]{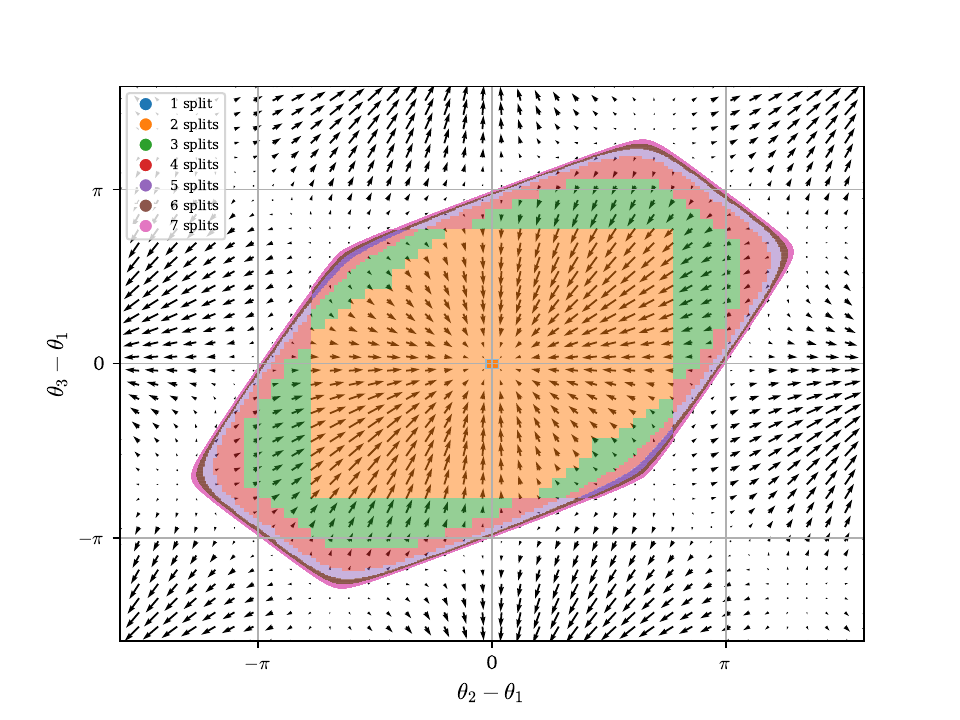}
    \caption{Phase portrait of system~\eqref{eq:kuramoto1} (three oscillators, $k=10$). The shaded region is the certified $1$-RoA returned by Algorithm~\ref{alg:grow-alpha-roa} with maximum split count $m=6$; colors indicate each ball's split depth. We have $\hl{\tau = 1.25}$ and $ \hl{L\simeq 13.33}$.}
    \label{fig:splits6}
\end{figure}

Finally, we examine how the certified region and the runtime scale with the ambient dimension. Holding the system and algorithmic parameters fixed, we vary the state dimension from $2$ to $6$ and the maximum split count from $0$ to $6$, applying $\texttt{Trim}=\texttt{True}$ only at the final pruning pass. Figure~\ref{fig:2dregion} reports the percentage of the basin of attraction certified by Algorithm~\ref{alg:grow-alpha-roa} against the wall-clock runtime. Here the certified set, clipped to $Q_R$, is compared with a Monte Carlo estimate of the basin of the origin within $Q_R$, obtained by integrating $20{,}000$ initial conditions sampled uniformly in $[-\pi,\pi]^d$ up to $T=50$ and declaring convergence when $\|\phi(T,x)\|_\infty < 0.05$.

\begin{figure}
    \;\;\includegraphics[width=0.425\textwidth]{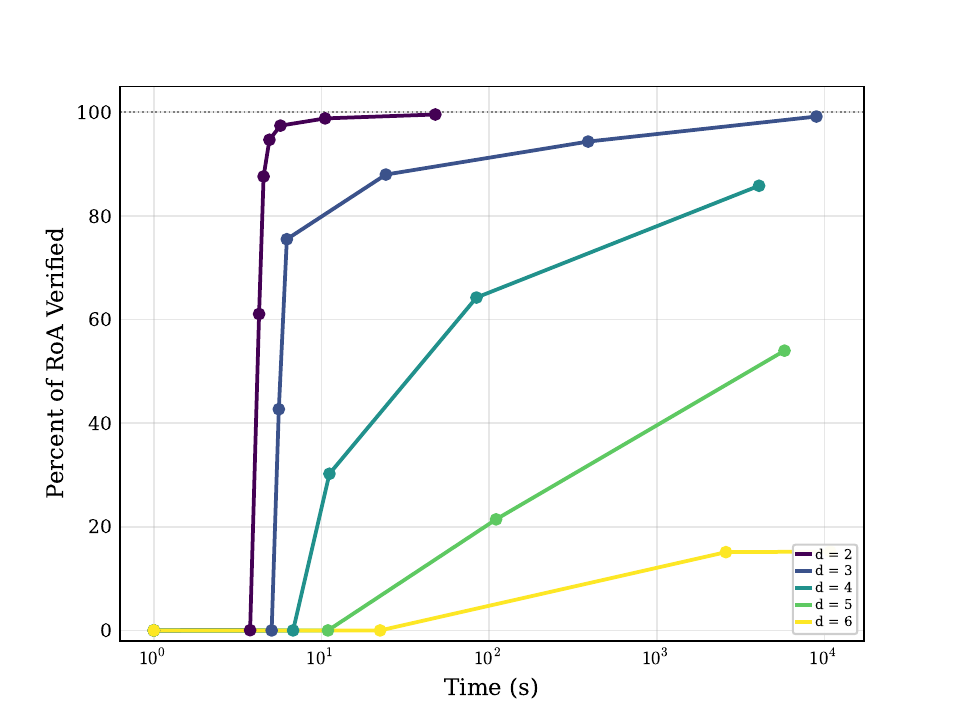}
    \caption{Percentage of the $1$-RoA certified versus wall-clock runtime of Algorithm~\ref{alg:grow-alpha-roa} on system~\eqref{eq:kuramoto1}. Each curve corresponds to a fixed state dimension ($d=2,\dots,6$); points along each curve to maximum split counts $m=0,\dots,6$. Time is on a logarithmic scale.}
    \label{fig:2dregion}
\end{figure}

\section{Conclusions}\label{ct-conclusions}
In this paper, we relaxed the notion of set invariance by introducing set recurrence, and showed that, under mild conditions, recurrent Lyapunov functions suffice to guarantee stability, asymptotic stability, and exponential stability of an equilibrium point. We further established norm-agnostic converse results: under each stability notion, every norm satisfies a slightly weaker version of the corresponding recurrence condition; the weakening is shown to correspond to a notion of practical (asymptotic or exponential) stability. Building on this theory, we developed two parallelizable, trajectory-based algorithms: Algorithm~\ref{alg:region-verification} (best decay rate over a fixed region), \hl{which certifies tighter rates than Sum-of-Squares methods on the bilinear benchmark}, and Algorithm~\ref{alg:grow-alpha-roa} (largest $\alpha$-region of attraction for a fixed rate), \hl{which certifies large regions of attraction on the Kuramoto benchmark}. A complementary sample-complexity bound shows that $O(\log(R/\varepsilon))$ trajectory evaluations suffice, revealing an intrinsic trade-off between certified performance and computational cost.

\section*{Acknowledgments}\label{sec:ack}
The authors thank Eduardo Sontag and Victor Preciado for several insightful comments on earlier versions of this work\red{, and Yue Shen for contributions to the conference version of this manuscript \cite{sspm2023cdc}}.



\bibliographystyle{IEEEtran}
\bibliography{refs.bib}


\begin{IEEEbiography}[{\includegraphics[width=1in,height=1.25in,clip,keepaspectratio]{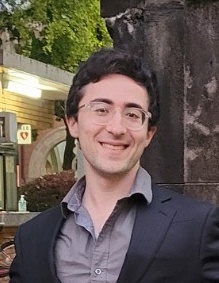}}]
{Roy N. Siegelmann} is a Postdoctoral Associate at the Massachusetts Institute of Technology and a Visiting Postdoctoral Scholar at Harvard University. He received his Ph.D. degree in Applied Mathematics and Statistics from Johns Hopkins University in 2025, along with a master’s degree in Computer Science. He received his B.S. degree in Pure Mathematics with a minor in Computer Science from the University of Massachusetts Amherst.
He was awarded a National Research Service Award (NIH) in 2022 and the Harriet H. Cohen Engineering Fellowship in 2020. His research interests lie at the intersection of control theory, dynamical systems, and machine learning, with applications to large language models, reinforcement learning, and synthetic biology.
\end{IEEEbiography}
\begin{IEEEbiography}[{\includegraphics[width=1in,height
=1.25in,clip,keepaspectratio]{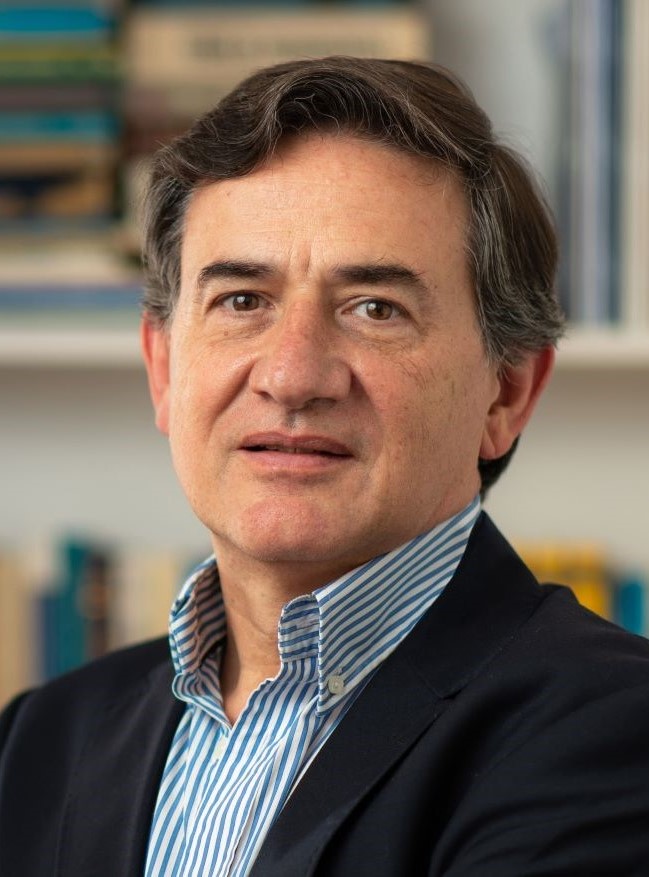}}]{Fernando Paganini} (M'90--SM'05--F'14)
received his degrees in both Electrical Engineering and Mathematics from
Universidad de la Rep\'ublica, Montevideo, Uruguay, in 1990, and his M.S. and PhD degrees in Electrical Engineering from the California
Institute of Technology, Pasadena, in 1992 and 1996 respectively.
His PhD thesis received the 1996 Wilts Prize and the 1996 Clauser
Prize at Caltech. From 1996 to 1997 he was a postdoctoral associate at MIT. Between
1997 and 2005 he was on the faculty the Electrical Engineering
Department at UCLA, reaching the rank of Associate Professor.
Since 2005 he is Professor of Electrical and Telecommunications
Engineering at Universidad ORT Uruguay, and currently Vice-Dean of Research.

Dr. Paganini has received the 1995 O. Hugo Schuck Best Paper
Award, the 1999 Packard Fellowship, the 2004 George S. Axelby Best Paper Award.
He is a member of the Uruguayan National Academy of
Sciences, the Uruguayan National Academy of Engineering, and the Latin American Academy of Sciences. During the pandemic he served in Uruguay as one of three coordinators of the Honorary Scientific Advisory Group on Covid-19, receiving after the Presidency of the Republic Award. He is a Fellow of the IEEE (2014) and a Fellow of IFAC (2023). His research interests are control and networks.

\end{IEEEbiography}
\begin{IEEEbiography}[{\includegraphics[width=1in,height=1.25in,clip,keepaspectratio]{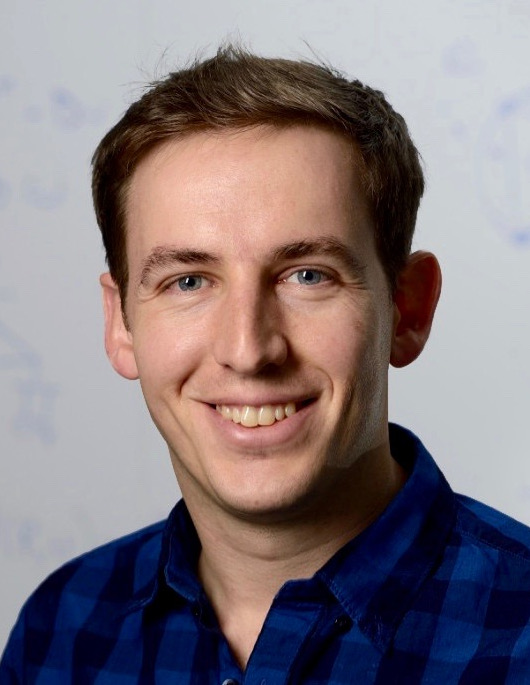}}]
{Enrique Mallada} (S'09-M'13-SM'19) is an Associate Professor of Electrical and Computer Engineering at Johns Hopkins University. Prior to joining Hopkins in 2016, he was a Postdoctoral Fellow in the Center for the Mathematics of Information at Caltech from 2014 to 2016. He received his Ingeniero en Telecomunicaciones degree from Universidad ORT, Uruguay, in 2005 and his Ph.D. degree in Electrical and Computer Engineering from Cornell University in 2014.
Dr. Mallada was awarded
the NSF CAREER award in 2018,
the ECE Director's PhD Thesis Research Award for his dissertation in 2014,
the Center for the Mathematics of Information (CMI) Fellowship from Caltech in 2014,
and the Cornell University Jacobs Fellowship in 2011.
His research interests lie in the areas of control, dynamical systems and optimization, with applications to engineering networks such as power systems and the Internet.
\end{IEEEbiography}

\balance

\ifthenelse{\boolean{with-appendix}}{
\clearpage
\section*{Appendix}
\subsection{Proof of Corollary~\ref{cor:level set connetness}}
\begin{proof}

Let $V$ be the Zubov's function whose existence is guaranteed by Theorem \ref{thm:zubov}. Thus by the definition of $V$, for $c\in(0,1)$, $V_{< c}\subseteq\mathcal{A}(x^*)$. Further from \eqref{eq:dotVF}, it follows that $(\mathcal{L}_fV)(x)<0$, for $x\in V_{< c}\subset \A(x^*)$. Thus, $V_{< c}$ is positive invariant.

To prove the $V_{< c}$ is contractible, we need to provide a continuous mapping $H:[0,1]\times V_{< c} \rightarrow V_{< c}$ such that $H(0,x)=x$ and $H(1,x)=x^*$ for all $x\in V_{< c}$. Similar to~\cite{sontag2013mathematical}, we define $H(s,x):= \phi(\frac{s}{1-s},x)$ for $s<1$, and $H(1,x)\equiv x^*$. Note that $H$ is continuous in $s$ and $x$ for $s<1$ , as in ~\cite{Khalil2002}. 
We are thus left to prove continuity at each $(1,x)$. 
To do so, we take any such $x$ and pick any open neighborhood $\mathcal{V}$ of $H(1,x)=x^*$. By Assumption~\ref{as:roa} as well as the definition of asymptotic stability, it follows that there exist another open neighborhood $\mathcal{W}\subseteq\A(x^*)$ of $x^*$ for which all trajectories starting in $\mathcal{W}$ remain in $\mathcal{V}$, i.e., $\phi(t,x_0)\in\mathcal{V}$ for all $x_0\in\mathcal{W}$ and $t>0$. Given $V_{< c}\subseteq\mathcal{A}(x^*)$, any point $x\in V_{< c}$ satisfies $\phi(T,x)\in\mathcal{W}$ for some $T>0$. This, together with the continuity of $\phi(T,\cdot)$, implies that there is a neighborhood $\mathcal{V}'\subseteq V_{< c}$ of $x$ such that $\phi(T,y)\in\mathcal{W}$ for all $y\in \mathcal{V}'$, which let us conclude:
\begin{align}
    H(s,y)\in \mathcal{V} \quad \text{whenever}\,\, y\in\mathcal{V}'\,\,\text{and}\,\, s>1-\frac{1}{T+1}
\end{align}
and continuity follows since $\mathcal{V}$ could be made arbitrarily small.
\end{proof}


\subsection{Proof of Theorem~\ref{thm:recurrence}}
\begin{proof}
($\implies$):~
If $\R$ is recurrent, then for any $x_0\in\R$, we can construct an infinite sequence $\{x_{n}\}_{n=0}^\infty$ that lies within $\R$, i.e., $\{x_{n}\}_{n=0}^\infty\subset\R$. Precisely, we start from $t_0=0$ that gives solution $x_0:=\phi(0,x_0)\in\R$. Then, given $x_{n}:=\phi(t_n,x_0)\in\R$ and some fixed time interval $\tau>0$, we defined $t_{n+1}$ as the first time since $t_n+\tau$ that the solution $x_{n+1}:=\phi(t_{n+1},x_0)$ lies within $\R$, i.e.,  $\phi(t_{n+1},x_0)\in\R$ and $\phi(t,x_0)\not\in\R$ for all $t\in[t_n+\tau,t_{n+1})$. Note that  {Definition \ref{defn:recurrent}} ensures there exist such a $t_{n+1}$ and $x_{n+1}$. 

Now since $\R$ is compact, by Bolzano-Weierstrass theorem, $\{x_{n}\}_{n=0}^\infty$ must have a sub-sequence  $\{x_{n_i}\}_{i=1}^\infty$ that converges to an accumulation point $\bar x\in \R$. It follow then by the definition of $\omega$-limit sets (Definition \ref{defn:omega-limit-set}) that $\bar x= \lim_{i\rightarrow \infty}x_{n_i}\in \Omega(f)\cap\R\not=\emptyset$. Thus, we have that $x_0\in\Omega(f)\cap\R$. Finally, since $x_0$ was chosen arbitrarily within $\R$, it follows that $\R\subset\A(\Omega(f)\cap\R)$.

\noindent
($\Longleftarrow$):~
By assumption $\Omega(f)\cap\R\subset \opint\R$ and $\R \subset \A(\Omega(f)\cap\R)$. Therefore, if $x_0\in\R$, then $x_0\in\A(\Omega(f)\cap\R)$ and it follows that $\phi(t,x_0)$ converges to $\Omega(f)\cap\R\subset\R$. Therefore for all $x_0\in\R$, since $\Omega(f)\cap\R\subset \opint\R$, {it follows from the continuity of $\phi$ that there always exists some time $t>0$ such that $\phi(t,x_0)\in\R$. Thus $\R$ is recurrent.}
\end{proof}

\subsection{Proof of Corollary~\ref{cor:roa-subet}}
\begin{proof}
($\implies$):~
By assumption $\R$ is compact, $\partial\R\cap\Omega(f)=\emptyset$, Theorem \ref{thm:recurrence} implies that if $\R$ is recurrent then $\Omega(f)\cap\R\not=\emptyset$ and $\R\subset\A(\Omega(f)\cap\R)$. Since all the equilibrium points inside $\Omega(f)$ are hyperbolic, the regions of attraction of the unstable ones are not full dimensional~\cite{chiang1988}, and therefore $\A(\Omega(f)\cap\R)\backslash\A(x^*)$ is not full dimensional. It follows that the set $\R\backslash\A(x^*)\subset\A(\Omega(f)\cap\R)\backslash\A(x^*)$ is also not full dimensional. Together with the fact that ROA $\mathcal{A}(x^*)$ is an open contractible set, one can conclude that $\R\backslash\A(x^*)\subseteq \partial \R$. Otherwise, there exists a point $x\in\R\backslash\A(x^*)$ satisfying $x\in \opint (\R)$, which contradict with $\R\backslash\A(x^*)$ not full dimensional.

Now since $(\Omega(f)\cap \R)\backslash x^* \subseteq \R \backslash \A(x^*)\subseteq \partial \R$ contradict with $\partial\R\cap\Omega(f)=\emptyset$ if $(\Omega(f)\cap \R)\backslash x^* $ is non-empty, we can further conclude that  $\Omega(f)\cap \R =\{x^*\}$ given $\Omega(f)\cap\R\not=\emptyset$. And the other conclusion follows from $\R\subset\A(\Omega(f)\cap\R)=\A(x^*)$.

($\Longleftarrow$):~
This direction is trivial given Theorem~\ref{thm:recurrence}.
\end{proof}

\subsection{Proof of Theorem~\ref{thm:bounded k}}
\begin{proof}
The proof of the theorem relies on Zubov's existence criterion stated in Theorem \ref{thm:zubov}. Given $\R$, let us now define
\begin{align}
    \underline{c} := \min_{x\in\partial\R} V(x), \quad \overline{c} := \max_{x\in\partial\R} V(x), \\
    \text{and}\quad a := \max_{x\in C} \nabla V(x)^T f(x),
\end{align}
where $C=\{x\in\mathbb{R}^d: \underline{c}\leq V(x)\leq\overline{c}\}$ is compact. 

We first argue that $V_{\leq\underline{c}}:=\{x:V(x)\le \underline{c}\}\subseteq \R$. Let $\underline{x}$ be the point in $\partial\R$ that achieves the minimum, i.e, $V(\underline{x})=\underline{c}$, and let $\R'$ be the connected component of $\R$ containing $\underline{x}$. Note that $x^*\in\opint \R$ must be contained in $\R'$, since otherwise, the trajectory $\phi(t,\underline{x})$, which strictly decreases $V$ must eventually find a point $x'\in\partial R$ with $V(x')<\underline{c}$; which contradicts the  definition of $\underline{c}$. Thus, $x^*\in\R'\subseteq\R$. 

Suppose then that $V_{\leq \underline{c}}\not\subseteq\R'\subseteq\R$, for any point $\tilde x\in V_{\leq \underline{c}}\backslash\R'$, $V(\phi(t,\tilde x))<\underline{c}$, for $t>0$, and $\lim_{t\rightarrow\infty}\phi(t,\tilde x)=x^*$. Thus there exists $\tilde t>0$ s.t. $V(\phi(t,\tilde x))<\underline{c}$ and $\tilde x\in\partial \R$; contradiction. It follow then that $V_{\leq\underline{c}}\subseteq \R' \subseteq \R$.

Similarly, since $V_{\leq\overline{c}}$ contains every point in the boundary of $\R$, there cannot be any point in $x\in\R$ with $V(x)>\overline{c}$. 
We therefore get that the following inclusions must hold:
\begin{equation}\label{eq:inclusion}
V_{\leq\underline{c}}\subseteq \R \subseteq V_{\leq\overline{c}}.
\end{equation}

Finally, by \eqref{eq:inclusion}, for any point $x\in \R$ we must have $V(x)\leq \overline{c}$. Since the
time derivative of $V(x)$ is at most $a<0$, it follows that {after $t\ge \Bar{\tau}:=\frac{\underline{c}-\overline{c}}{a}$ the Lyapunov value $V(\phi(t,x))\leq \underline{c}$, which implies that $\phi(t,x)\in\R$ and result follows.}
\end{proof}

\subsection{Proof of Theorem~\ref{thm:recurrent subset}}
The proof of Theorem \ref{thm:recurrent subset} is analogous to Theorem \ref{thm:bounded k} and omitted due to space constraints.

\begin{figure}
    \center
    \includegraphics[width=0.33\textwidth]{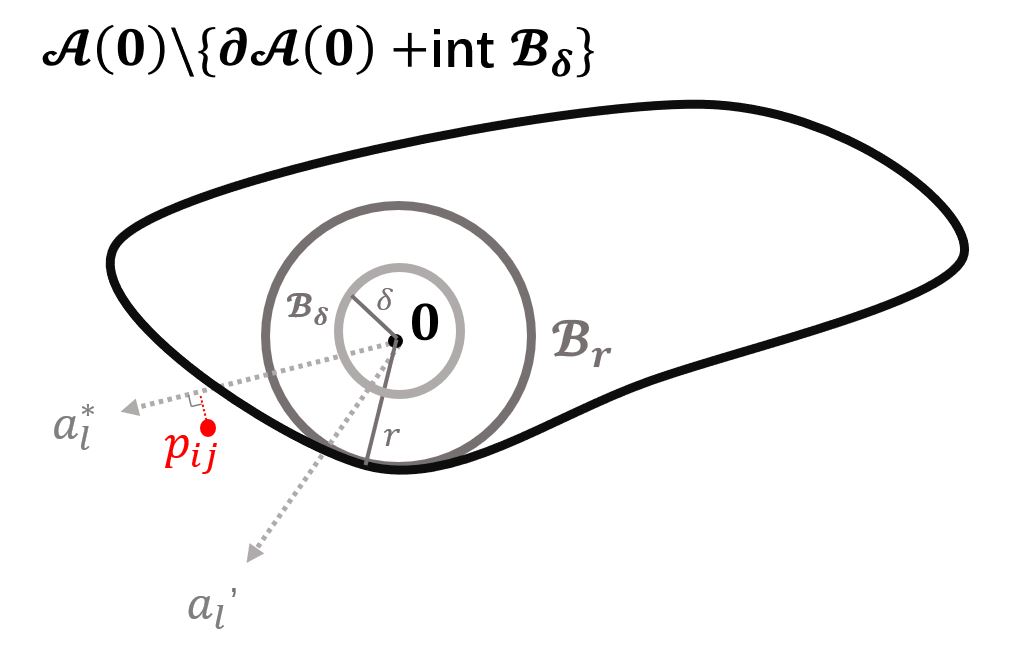}
    \caption{An illustration of the proof of Theorem~\ref{thm: inclusion}. In particular, given arbitrary point $p_{ij} \not \in \A(0)\backslash\{\partial\A(0)+\opint\mathcal{B}_{\delta}\}$, in the sphere case, it follows that $\norm{p_{ij}}_2\ge r$. And in the polyhedron case, the closest projection  $\max_{l\in \{1,...,n\}} {a_l^T p_{ij}} \ge r/2$ under Assumption~\eqref{eq:r net} that every pair of exploration directions are close enough.}
    \label{fig:inclu thm}
\end{figure}

\subsection{Proof of Theorem~\ref{thm:k-recurrent subset}}
\begin{proof}
Given Theorem~\ref{thm:recurrent subset}, this result  follows directly from $\phi(t,x)\in \R$ for all $x\in\R$ when $t\ge\Bar{\tau}(\delta)$.
\end{proof}

\subsection{Proof of Theorem~\ref{thm: inclusion}}
\begin{proof}
Given an arbitrary counter-example $p_{ij}$ w.r.t $\hat{\mathcal{S}}^{(i)}\supseteq\mathcal{B}_\delta$, it follows that $p_{ij} \not \in \A(0)\backslash\{\partial\A(0)+\opint\mathcal{B}_{\delta}\}$ by Theorem~\ref{thm:k-recurrent subset}; since otherwise, $p_{ij}$ would generate a $k$-recurrent trajectory. Then, as illustrated in Figure \ref{fig:inclu thm}, $\norm{p_{ij}}_2\ge r$. Further, let $\mathcal{B}_r:= \{x|\norm{x}_2 \le r\}\subseteq \mathcal{A}(0)\backslash\{\partial \mathcal{A}(0)+\opint\mathcal{B}_{\delta}\}$. 

We now reason differently depending on the type of approximation.
\vspace{-3ex}
\begin{description}[align=left,style=nextline,leftmargin=*,labelsep=\parindent,font=\normalfont]
\item \emph{(Sphere case):}~
It then follows from $\norm{p_{ij}}_2\ge r$ that whenever $\varepsilon\le r-\delta$, the update leads to $\,b^{(i+1)} = \norm{p_{ij}}_2-\varepsilon \ge r-\varepsilon \ge \delta$.\vspace{-2.5ex}
 \item \emph{(Polyhedron case):}~
 It follows from~\eqref{eq:r net}, that for any point $p' \not\in \mathcal{B}_r$, we have $\max_{l\in \{1,...,n\}}  a_l^T p'\geq \|p'\|\cos \left(\frac{2}{3}\pi\right)\geq \frac{r}{2}.$
 Therefore, since by definition of $\mathcal B_r$, $p_{ij}\not\in \mathcal B_r$ we conclude then that
 $b^{(i+1)}_{l^*} =a_{l^*}^T p_{ij}-\varepsilon\ge \frac{r}{2} - \varepsilon\geq \delta.$
\end{description}
Together with the fact that $\hat{\mathcal{S}}^{(0)}\supseteq \mathcal{B}_\delta$, result follows.
\end{proof}

\subsection{Proof of Theorem~\ref{thm:counter example upper bound}}
\begin{proof}
 Note that once a counter-example is encountered, we decrease the radius constraint (sphere case) or on one of the exploration directions (polyhedron case) by at least $\varepsilon$. Therefore, $\hat{\mathcal{S}}^{(i)}\in \mathcal{F}_c$ for all $i\in\{1,2,...\}$. And for any fixed $k$, our method can find at most $\frac{c}{\varepsilon}$ counter-examples with the sphere approximation and $n\frac{c}{\varepsilon}$ counter-examples with the polyhedron approximation without failing. Since it takes at most $\log_2{\Bar{k}}$ updates on $k$ to find some $k\geq \bar k$ using the doubling method, result follows.
\end{proof}


\subsection{Proof of Lemma~\ref{lem:scountervol}}
\begin{proof}
We will proof this statement by contrapositive, i.e., we will show $\vol(\mathcal{S}_\text{\emph{counter}})=0$ implies $\opint\mathcal{S}\subseteq\mathcal{A}(0)$.

We first argue that if $\vol(\mathcal{S}_\text{\emph{counter}})=0$ then $\mathcal{S}\backslash \A(0)\subseteq\partial\mathcal{S}$. To see this, we can form a contradiction. Assume $\mathcal{S}\backslash \A(0)\not\subseteq\partial\mathcal{S}$ and recall $\mathcal{A}(0)$ is an open contractible set. It follows then that there exist a full dimensional set $\mathcal{V}\subseteq\mathcal{S}\backslash\A(0)$. Now note that the regions of attraction of the unstable hyperbolic equilibrium points are not full dimensional. There is then a point $p\in \opint\mathcal{V}$ satisfying $p\in \mathcal{S}_\text{\emph{counter}}$. By the continuity of $\phi(\tau,\cdot)$, we further have an open neighborhood $\mathcal{V}'$ of $p$ satisfying $\vol(\mathcal{V}')>0$ and $\mathcal{V}'\subseteq \mathcal{S}_\text{\emph{counter}}$, which contradict with $\vol(\mathcal{S}_\text{\emph{counter}})=0$.

In summary, we have $\mathcal{S}\backslash \A(0)\subseteq\partial\mathcal{S}$, which imples that $\{\mathcal{S}\backslash \A(0)\}\backslash\{\partial \mathcal{S}\}=\emptyset$. Therefore, we have $\opint\mathcal{S} \subseteq \mathcal{A}(0)$ and result follows.
\end{proof}

\subsection{Proof of Lemma~\ref{lem:counter example as}}
\begin{proof}
Note that we have $\mathcal{S}_{\text{counter}}\subseteq\mathcal{S}$ and $\vol(\mathcal{S}_\text{counter})>0$ by Lemma~\ref{lem:scountervol}. Then, denoting the counter-example ratio as $\rho:=\vol(\mathcal{S}_\text{counter})/\vol(\mathcal{S})$, one can conclude $0<\rho\le1$ and 
\begin{align}
    \lim_{m\rightarrow\infty}\mathbb{P}(X_0=...=X_m=1)=\lim_{m\rightarrow\infty}(1-\rho)^{m} = 0.
\end{align}
\end{proof}

\subsection{Proof of Theorem~\ref{thm: Sas}}
\begin{proof}
  Suppose that at any given iteration $i$ the set $\opint \hat{\mathcal{S}}^{(i)}\not\subseteq\mathcal{A}(0)$. Then it follows from Lemma \ref{lem:counter example as} that a counter-example is eventually found almost surely, and a new set $\hat{\mathcal{S}}^{(i+1)}$ is obtained.
  Also Theorem~\ref{thm:counter example upper bound} implies the total number of such transitions is finite, since $\hat{\mathcal{S}}^{(0)}\in \mathcal{F}_c$ and $\mathcal{B}_\delta \subseteq\hat{\mathcal{S}}^{(0)}$.
  
  Now let $\hat{\mathcal{S}}^*$ denote the last updated approximation. Note that since there are not further updates to  $\hat{\mathcal{S}}^*$  with probability one, this implies that $\vol(\hat{\mathcal{S}}^*_{\text{counter}})=0$. We argue then that $\opint \hat{\mathcal{S}}^{*}\subseteq\mathcal{A}(0)$, since otherwise $\vol(\hat{\mathcal{S}}^*_{\text{counter}})>0$, which contradicts the fact that $\hat{\mathcal{S}}^{*}$ is the last iteration. Finally, $\opint \hat{\mathcal{S}}^*$ is non-empty since Theorem~\ref{thm: inclusion} implies $\hat{\mathcal{S}}^*\supseteq\mathcal{B}_\delta$. 
\end{proof}

\subsection{Proof of Theorem~\ref{thm: multi center}}
  \begin{proof}
  By definition $\hat{\mathcal{S}}_{\text{multi}}^{(i)}\supseteq\hat{\mathcal{S}}_1^{(i)}$ for all $i\in\mathbb{N}^+$, Theorem~\ref{thm: inclusion} therefore implies $\hat{\mathcal{S}}_{\text{multi}}^{(i)}\supseteq\hat{\mathcal{S}}_1^{(i)}\supseteq\mathcal{B}_\delta$ under \eqref{eq:epsilon}. The  bound on the total number of counter examples follows from Theorem~\ref{thm:counter example upper bound}. Since every additional approximation $\hat{S}_q^{(i)}\in \mathcal{F}_c$ for all $q\in\{1,...,h\}$ and iteration $i\in\{1,...\}$. Finally, by generalizing  Lemma ~\ref{lem:scountervol}, Lemma ~\ref{lem:counter example as} and Theorem~\ref{thm: Sas} to the scope of $\mathcal{F}_c^h$, the last statement follows.
  \end{proof}
}{}
\end{document}